\documentclass[12pt]{amsart}
 \usepackage{latexsym,amsthm}
\usepackage{amssymb}
\usepackage{amsmath}
\usepackage{epsfig}
\usepackage{graphicx}
 \usepackage{mathrsfs}
\numberwithin{equation}{section}
\usepackage{cite}
\usepackage{comment}
\usepackage{subfiles}
\usepackage{cleveref}
\input xy
\xyoption{all}

\usepackage[usenames,dvipsnames]{color}

\newenvironment{psmallmatrix}
  {\left(\begin{smallmatrix}}
  {\end{smallmatrix}\right)}
\theoremstyle{plain}
\newtheorem{prop}{Proposition}[section]

\newtheorem{theo}[prop]{Theorem}
\newtheorem{coro}[prop]{Corollary}

\newtheorem{lemm}[prop]{Lemma}

\theoremstyle{definition}
\newtheorem{defi}[prop]{Definition}
\newtheorem{prob}[prop]{Problem}

\newtheorem{conj}[prop]{Conjecture}

\newtheorem{exam}[prop]{Example}

\numberwithin{equation}{section}

\newcommand{\GL}{{\rm GL}}
\newcommand{\PSL}{{\rm PSL}}

\newcommand{\SL}{{\rm SL}}

\newcommand{\F}{\mathbb{F}\,}
\newcommand{\core}{{\rm core}}
\newcommand{\glq}{{\GL_n(\F_q)}}
\newcommand{\slq}{{\SL_n(\F_q)}}
\newcommand{\pslq}{{\PSL_n(\F_q)}}
\newcommand{\lcm}{{\rm lcm}}

\begin{document}
\title[Minimal Faithful Permutation Representations]
{Minimal Faithful Permutation Representations for linear groups}

\author{Neelima Borade}
\author{Ramin Takloo-Bighash}
\address{Department of Mathematics, Statistics, and Computer Science, \\
University of Illinois at Chicago, 851 S. Morgan Str\\
Chicago, IL 60607\\
USA}
\email{nb4296@princeton.edu}
\email{rtakloo@uic.edu}

\begin{abstract} 
In this paper we study the minimal faithful permutation representations of $\SL_n(\F_q)$ and $\GL_n(\F_q)$. 
\end{abstract} 

\keywords{Permutation representations, linear groups, finite fields}

\date{\today}
\maketitle

\section{Introduction}
By a classical theorem of Cayley any finite group can be realized as a subgroup of a finite permutation group. In fact, for a finite group $G$ of size $|G|$, Cayley's construction identifies $G$ with a subgroup of $S_{|G|}$, and the embedding is given by the regular action of $G$ on itself. One can often find a smaller permutation group $S_n$ containing a copy of $G$.  Given a finite group $G$ we define $p(G)$ to be the smallest natural number $n$ such that $S_n$ has a subgroup isomorphic to $G$.  We call the corresponding permutation representation on $n$ letters the {\em minimal faithful representation of $G$}. By the proof of Cayley's theorem, $p(G) \leq |G|$. Despite its innocent sounding definition, computing $p(G)$ in general is an unsolved problem. Johnson's paper \cite{Johnson} seems to be the first reference which addresses the problem of finding $p(G)$ as such and obtains various results, among which is the classification of finite groups $G$ such that $p(G) = |G|$, \cite[Theorem 1]{Johnson}.  Johnson \cite{Johnson} also gives the value of $p(G)$ for Abelian groups. Various other classes of groups, including $p$-groups, some easy semi-direct products, and some solvable groups, are studied in \cite{Elias, EST}, though no explicit general formulae are obtained. 

The study of $p(G)$ for classical simple groups goes back to Galois who proved if $q >11$ is a prime number, then $p({\rm{PSL}}_2(\F_q)) = q+1$. 
Summarizing results by Cooperstein, Grechkoseeva, Mazurov, and Vasil'ev   \cite{Cooper, Gre, Maz, V1, V2, V3, V4},  Table 4 of \cite{Guest} contains values of $p(G)$ for classical simple groups and exceptional simple groups of Lie type.  The Atlas  \cite{Conway} contains the value of $p(G)$ for all finite sporadic simple groups. For a simple group $G$ computing $p(G)$ and determining the corresponding minimal faithful representation is tantamount to finding a subgroup of $G$ of minimal index. 
In his thesis, Patton \cite{Patton} determined  subgroups of minimal index in $\SL_n(\F_q)$ and  $\rm{SP}_{2m}(\F_q)$ for $q$ an odd prime power. 
Cooperstein \cite{Cooper} computed the minimal index of a subgroup for the remaining classical groups over finite fields using a generalization of Patton's method.   Given $G$, if $H$ is the subgroup of minimal index in $G$, we obtain a permutation representation of $G$ on $ G/H$. Cooperstein calls the size of the set $\left| G/H \right|$ the {\em minimal degree of $G$}. In this paper, we denote the minimal degree of a group $G$ by $\rho(G)$.
An important point to note is that this permutation representation is, in general, {\em not} faithful, and for that reason, unless $G$ is simple, the minimal degree $\rho(G)$ of $G$ is not necessarily equal to $p(G)$.\footnote{Even some of the experts reading an early draft of this manuscript, including at least one referee, have been confused about the distinction between the minimal degree $\rho(G)$ a la Cooperstein and $p(G)$.}  In fact, Table 1 of \cite{Cooper} has a column listing the size of the kernel of the corresponding permutation representation for each case. 

In general, computing $p(G)$ for a non-simple group $G$, and determining the structure of the corresponding faithful permutation representation, is far more complicated than computing the minimal degree $\rho(G)$ of $G$. For comparison, if $G$ is a group of order $q^n$ for a prime number $q$, then by Sylow's theorem, $G$ has a subgroup of order $q^{n-1}$, and as a result, $\rho(G)$ is always equal to $q^n/ q^{n-1} = q$. On the other hand, by \cite{Elias, EST}, the quantity $p(G)$ is related to the fine structure theory of $G$.

There are very few results in literature about the computation of $p(G)$ for a non-simple finite group $G$ of Lie type.  In fact, to the best of our knowledge, save for the few examples worked out in the Atlas \cite{Conway}, the only known family of example is $G = \SL_2(\F_q)$ for $q$ a prime power. Theorem 3.7 of \cite{gl2} claims to have computed $p(\GL_2(\F_q))$ but as we will see, this result is wrong; the correct result for this case is stated as Theorem \ref{theo11} below where we bootstrap the result on $\SL_2$ to build minimal faithful representations of $\GL_2$.    The passage from $\SL_2$ to $\GL_2$ is not trivial. Observe that $\GL_2(\F_q)= D_1 \cdot \SL_2(\F_q)$, where $D_1$ is the group of diagonal matrices with any $a \in {\F_q}^\times$ as the top left element on its diagonal and every other element equal to $1$. This might suggest that the degree of the minimal faithful permutation representation for $\GL_2(\F_q)$ can be easily computed from that of $\SL_2(\F_q)$ by utilizing the semi-direct product decomposition. In fact, Lemma 2.4 in \cite{EDH} claims that the degree of the minimal faithful permutation representation of the semi-direct product $G \rtimes H$ is the same as that of $G$. However, there is a mistake in  line six of  the proof. The authors' claim that if $B_1, \cdots, B_k$ gives a minimal faithful representation of $G$, then the core in $GH$ of $B_1H \cap \cdots \cap B_kH = $ core of $(B_1 \cap \cdots \cap B_k)H$  is  the core in $GH$ of $B_1 \cap \cdots \cap B_k$ times the core in $GH$ of $H$.  This is not necessarily true, and in fact false in our case.  In general the subgroup structure of semi-direct product is very complicated, and by \cite[Lemma 1.3]{prodsubgp}, maximal subgroups of the direct product of groups depend on both groups and are typically quite intricate. Even for a familiar group like the dihedral group the minimal faithful permutation representation is surprisingly small, \cite{Elias}. 

\

This is a summary of what we accomplish in this paper: 

\begin{enumerate}
\item[(i)] We determine $p(\SL_3(\F_q))$ for all odd prime powers $q \geq 3$, c.f. Theorem \ref{sl3}. 
\item[(ii)] When $n \geq 4$, we compute $p(\SL_n(\F_q))$ for infinitely many pairs $(n, q)$, including the cases where $\gcd(n, q-1)=1$, c.f. Theorem \ref{slnmainres}. 
\item[(iii)] We formulate a conjecture for $p(\SL_n(\F_q))$ for all large enough odd prime powers $q$ and all $n$, c.f. Conjecture \ref{conj:sln}. 
\item[(iv)] We classify all minimal faithful permutation representations of $\GL_2(\F_q)$ for $q$ an odd prime power $\geq 3$ and compute $p(\GL_2(\F_q))$, c.f.  Theorem  \ref{theo11}. 
\item[(v)] We classify all minimal faithful permutation representations of $\GL_3(\F_q)$ for $q$ a prime power $\geq 3$ and compute $p(\GL_3(\F_q))$, c.f. Theorem \ref{theo314}.  
\item[(vi)] For all $n, q$, we construct a faithful permutation representation of $\GL_n(\F_q)$ and we conjecture that it is minimal, c.f. Proposition \ref{fcoll} and Conjecture \ref{conj11}. Also, see Corollary \ref{coro25}. 
\end{enumerate}

As in the case of $\SL_2(\F_q)$ treated in  \cite{sl2}, our computation of $p(\SL_3(\F_q))$ uses the classification of the maximal subgroups, Tables $8.3$ and $8.4$ of \cite{sln}.   The fact that our proof of this fact relies on the classification of subgroups is an impediment to computing $p(\SL_n(\F_q))$ in general. For $n \geq 4$, starting with Patton's subgroup \cite{Patton} we construct in the proof of Theorem \ref{slnmainres} a core-free collection of subgroups of $\SL_n(\F_q)$. In general we cannot prove that this collection gives a minimal faithful permutation representation, but we can do so if $n, q$ satisfy a technical condition, {\em very divisibility} in Definition \ref{def:div}.  In cases where there is a classification of maximal subgroups, e.g. as in the tables in \cite{sln},  we have verified that these  core-free collections are in fact minimal. An illustrative example is explained in Example \ref{ex:special}. Inspired by these examples we conjecture that the core-free collections constructed in the proof of Theorem \ref{slnmainres} are in general minimal, Conjecture \ref{conj:sln}.

To construct faithful collections of $\GL_n(\F_q)$ we use our knowledge of faithful collections of $\SL_n(\F_q)$.  By using the faithful collection constructed for $\SL_n(\F_q)$ we construct a faithful collection for $\GL_n(\F_q)$ in \S \ref{sect:amincollection}.  In \S \ref{sec:mingl2} and \S\ref{sec:mingl3}, we prove that this faithful collection is in fact minimal for $\GL_2(\F_q)$ and $\GL_3(\F_q)$, respectively.  Here too the proof relies on the classification of maximal subgroups of $\SL_2(\F_q)$ and $\SL_3(\F_q)$. The idea is to start with an arbitrary minimal faithful collection of $\GL_2(\F_q)$, respectively $\GL_3(\F_q)$, and try to beat the bound given by the faithful collection of  \S \ref{sect:amincollection}.  The surprising thing is that for $\GL_2(\F_q)$ and $\GL_3(\F_q)$ the minimal faithful collections are very rigid and one can completely classify them, Theorem \ref{theo313} and Theorem \ref{theo314}. What facilitates translating the minimality problem from $\GL_n(\F_q)$ to $\SL_n(\F_q)$ is a clever idea from \cite{gl2} which we formulate as Lemma \ref{lemm31} and Lemma \ref{lemm32}.  In this case too we can eliminate our reliance on the subgroup classification if we assume $n, q$ satisfy the technical condition of Definition \ref{def:div}.   These considerations, plus other numerical examples, motivate Conjecture \ref{conj11}. 
  
It is clear from our presentation that much of what we do here can be extended to other finite classical groups and their similitude counterparts.  It would be interesting to investigate quasi-permutation representations of linear groups \cite{Wong}. Behravesh and his collaborators \cite{sl2, gl2} have started this investigation, but there is much to be done. Understanding minimal faithful permutation representations and their cousins, minimal faithful quasi-permutation representations, is not only of clear inherent interest, but also of importance in computations. For example, \cite{Applications} highlights some applications of minimal faithful permutation representations in computational group theory.  For another class of applications, see \cite{GSL}.

\

This paper is organized as follows.  Section \ref{sec:sln} contains the results on $\SL_n(\F_q)$.  In Section \ref{sect:amincollection} we construct a faithful permutation representation of $\GL_n(\F_q)$.  Section \ref{sec:generallemmas} collects a number of lemmas that are used in the next two sections. We study minimal faithful permutation representations of $\GL_2(\F_q)$ in \S\ref{sec:mingl2} and $\GL_3(\F_q)$ in \S\ref{sec:mingl3} by trying to beat the faithful permutation representation construction in \S\ref{sec:generallemmas}.  We end the paper with some general comments and future directions.

\

The second author is partially supported by a Collaboration Grant (Award number 523444) from the Simons Foundation.  This paper owes a great deal of intellectual debt to the papers\cite{sl2, gl2}. We thank Lior Silberman and Ben Elias for useful conversations. We wish to thank Roman Bezrukavnikov and Annette Pilkington who independently simplified our first step of the proof of Lemma \ref{lemm5.6}. We wish to thank the anonymous referee for their careful reading of the paper, We used {\tt sagemath} to carry out some of the numerical calculations and experiments used in the paper.

\subsection*{Notation}\label{note}
In this paper $\GL_n$ stands for the algebraic group of $n \times n$ matrices with non-zero determinant, and $\SL_n$ the subgroup of $\GL_n$ with elements of determinant equal to $1$.  The finite field with $q$ elements is denoted by $\F_q$. The integer $q$ is the power of a fixed prime number $p$. We fix $p, q$ throughout the paper. 
Write $g:= \gcd(n,q-1)$ and let the prime factorization of $g$ be as follows, $g = p_1^{r_1} p_2^{r_2} \cdots p_s ^{r_s}$. Given a natural number $m$, we can write $m = p_1^{a_1} p_2^{a_2}\cdots p_s^{a_s}q_{1}^{b_{1}}\cdots q_{t}^{b_t}$, where the $p_i$'s and $q_j$'s are distinct. We set $m_{n,q} := p_1^{a_1} p_2^{a_2}\cdots p_s^{a_s}$ and $T_{n,q}(m) = \sum_{i = 1}^{r}q_r^{b_r}$.
For example, if $n= 3$, then $g:= \gcd(3,q-1)$  $ = 1$ or $3$. If $3\mid m$ and $g = 3$, then $m_{3,q}:= $ the highest power of $3$ dividing m and $T_{3,q}(m)$ is the sum of all the primes in the prime factorization of $m$ except for $3$, along with their appropriate powers. If $g = 1$, then  $m_{3,q}$:=  $1$ and $T_{3,q}(m)$ is the sum of all the primes in the prime factorization of $m$. 
\\We also introduce notation for some subgroups of $\glq$ that will be used throughout the paper.
Let 
\[ D_t =
  \{ \begin{pmatrix}
    a & 0 & \dots & 0 \\
    0 & 1 & \dots & 0 \\
    \vdots & \vdots & \ddots & \vdots \\
    0 & 0 & \dots & 1
  \end{pmatrix} \mid a \in A_t\}
\]
and
\[ Z_t =
 \{ \begin{pmatrix}
    a & 0 & \dots & 0 \\
    0 & a & \dots & 0 \\
    \vdots & \vdots & \ddots & \vdots \\
    0 & 0 & \dots & a
  \end{pmatrix}
\mid a \in A_t \}, 
\] where recall that $A_t$ is the unique subgroup of $\F_q^\times$ of size $(q-1)/t$. 
We usually denote $Z_1$ by $Z$. Note that any subgroup of $Z$ is of the form $Z_t$ for some $t \mid q-1$. In fact, $Z_t$ is the unique subgroup of $Z$ of order $(q-1)/t$. For $s, t$ divisors 
of $q-1$ we have 
\begin{equation}\label{capst}
Z_s \cap Z_t = Z_{\lcm(s, t)}. 
\end{equation}

Let 
$$
\GL_n(\F_q)^t = \{ g \in \GL_n(\F_q) \mid \det g \in A_t \}. 
$$
Then it is clear that $D_t$, $Z_t$, and $\GL_n(\F_q)^t$ are subgroups of $\GL_n(\F_q)$, and that $\GL_n(\F_q)^t = D_t \cdot \SL_n(\F_q)$.  We note that for $t \mid q-1$, 
\begin{equation}\label{indext}
[\glq: \glq^t] =t. 
\end{equation}

We define $P$ to be the set of all matrices 
$$\begin{pmatrix}
  B & x\\ 
  0 & b
\end{pmatrix}$$
where $B \in \GL_{n-1}(\F_q)$, $b^{-1} =  \det B \in \F_q^{\times}$, $x \in \F_q^{n-1}$
and $Q$ to be the set of all matrices
$$\begin{pmatrix}
  C & x\\ 
  0 & c
\end{pmatrix}$$
where, $c^{-1} =  \det(C) \in A_{(q-1)_{n,q}}$,  $x \in \F_q^{n-1}$, $C \in \GL_{n-1}(\F_q)
$ and $B,C$, and $x$ arbitrary. 
By Patton \cite{Patton}, $P$ is the subgroup of $\SL_n(\F_q)$ of minimal index. 

The standard reference for minimal faithful permutation representations of finite groups is Johnson's classical paper \cite{Johnson}. In order to construct a faithful permutation representation of a group $G$ we need to construct a collection of 
subgroups $\{H_1, \dots, H_{\ell}\}$ such that $\core_G(H_1 \cap \dots \cap H_{\ell}) = \{e \}$.  Recall that for a subgroup $H$ of $G$, $\core_G(H)$ is the largest normal subgroup of $G$ contained in $H$, i.e., 
$$
\core_G(H) = \bigcap_{x \in G} x H x^{-1}. 
$$ 
We call a collection $\{H_1, \dots, H_{\ell}\}$ of subgroups of $G$ {\em faithful} if $\core_G(H_1 \cap \dots \cap H_{\ell}) = \{e\}$. In this case the left action of $G$ on the disjoint union 
$$
A = G/H_1 \cup \dots \cup G/H_{\ell}  
$$
is faithful. Note that $|A| = \sum_i |G/H_i|$.   A collection $\{H_1, \dots, H_{\ell}\}$ is called {\em minimal faithful} if 
\begin{enumerate}
\item $\core_G(H_1 \cap \dots \cap H_{\ell}) = \{e\}$, 
\item $\sum_i |G/H_i|$ is minimal among all collections of subsets satisfying (1).   In this case, $\sum_i |G/H_i|$ is denoted by $p(G)$. 
\end{enumerate}
The papers \cite{Johnson, EST} and the thesis \cite{Elias} contain many examples of explicit computations of $p(G)$ for various groups $G$. 

\section{Minimal faithful permutation representations of $\SL_n(\F_q)$}\label{sec:sln}

In this section we study the case of $\SL_n(\F_q)$ for $q$ odd and $n \geq 2$. 

\subsection{The case of $\SL_2(\F_q)$}\label{subsec:sl2} 
We recall the construction for $\SL_2(\F_q)$ for odd $q$ from \cite{sl2}.  Write $q-1 = 2^r \cdot m$ with $m$ odd.  Let $\varpi$ be a generator of the cyclic group $\F_q^\times$. For $t \mid q-1$, set $A_t = \langle \varpi^t \rangle$. The set $A_t$ is the unique subgroup of $\F_q^\times$ of size $(q-1)/t$, as introduced in \S\ref{note}.
Note that if $s, t$ are divisors of $q-1$, then $A_s \cap A_t = A_{\lcm(s, t)}$. 
Set \begin{equation}\label{HODD}
H_{odd} = \{ \begin{pmatrix} a \\ & a^{-1} \end{pmatrix} \begin{pmatrix} 1 & x \\ & 1 \end{pmatrix} \mid a \in A_{2^r}, x \in \F_q \}. 
\end{equation}
Then Theorem 3.6 of \cite{sl2} says that $H_{odd}$ is a core free subgroup of $\SL_2(\F_q)$ of minimal index, i.e., the action of $\SL_2(\F_q)$ on $\SL_2(\F_q)/H_{odd}$ is a minimal faithful representation. Also, 
it is easy to see that 
$$
[\SL_2(\F_q): H_{odd}] = (q-1)_2 (q+1). 
$$

\subsection{The case of $\SL_3(\F_q)$} 
In this section, we compute the minimal faithful permutation representation of $\SL_3(\F_q)$  for $q$ an odd prime power. In \cite{Ash} Aschbacher classified maximal subgroups of finite classical groups. He shows that each such maximal subgroup lies in one of the 8 classes of subgroups $\mathscr{C}_1,\cdots, \mathscr{C}_8$ or its socle is a non-abelian simple group and it belongs to a special class $\mathscr{S}$. Following this classification the maximal subgroups of $\SL_3(\F_q)$ have been compiled in Tables $8.3$ and $8.4$ of \cite{sln}. Using this we compute the order of the maximal subgroups of $\SL_3(\F_q)$ and conclude the result below.
\begin{theo}\label{sl3}
The minimal faithful permutation representation of $\SL_3(\F_q)$ for $q$ an odd prime power depends on $g =  \gcd(q-1,3)$. There are two cases.

\ 

\begin{enumerate}
\item If $g = 1$, then the the subgroup $M = E_q^2 :\GL_2(\F_q)$ of class $\mathscr{C}_1$  gives the minimal faithful permutation representation of $\SL_3(\F_q)$ and $p(\SL_3(\F_q) =$  $\frac{|\SL_3(\F_q)|}{|E_q^2 :\GL_2(\F_q)|} = \frac{q^3 -1}{q-1}$.
\item If $g = 3$, then
the maximal subgroup $G_3$ of  $M$ with trivial intersection with the order $3$ subgroup of the center and whose order is not coprime to $3$ gives the minimal faithful permutation representation of $\SL_3(\F_q)$ and $p(\SL_3(\F_q)) =$  $\frac{|\SL_3(\F_q)|}{|G_{3}|}  = \frac{(q^3-1)(q-1)_3}{q-1}$.
\end{enumerate}
\end{theo}
\begin{proof} After case by case analysis, we deduce that the subgroup of $\SL_3$ with maximal order is $E_q^2 :\GL_2(\F_q)$. 
\\ Case $1$: When $g=1$ this maximal subgroup has trivial core and gives the minimal faithful permutation representation. 
\\ Case $2$:
When  $g = 3$ i.e. $q \equiv 1$ mod 3, then each maximal subgroup of $\SL_3(\F_q)$ has order not coprime to 3 and the maximal subgroup $M = E_q^2 :\GL_2(\F_q)$ has non-trivial core.
 Elements of $M$ have the form:
$\begin{pmatrix}
 a & b & *\\
c &  d & *\\
0 & 0 & \det \gamma^{-1}
\end{pmatrix}$ where $\gamma = 
\begin{pmatrix}
a & b\\
c & d
\end{pmatrix}.$
 By the argument Theorem \ref{slnmainres} the minimal faithful permutation representation in this case is offered by the maximal subgroup $G_3$ of  $M$ with trivial intersection with the order $3$ subgroup of the center and whose order is not coprime to $3$, as long as $n = 3$ is \textit{very divisible} in the sense of Definition \ref{def:div} below.

\ 

We claim that whenever $g = 3$ 
the subgroup $G_3$ gives the minimal faithful permutation representation of $\SL_3(\F_q)$ for all values of $q$ i.e. even when $3$ is not \textit{very divisible}. 
We reason as follows: the order of $G_3$ is given by $\frac{(q^2-1)(q^2-q)q^2}{(q-1)_3}$.
The maximal core-free subgroup  we want is this maximal core free subgroup $G_3$ of $M$, as the order of each maximal subgroup of $\SL_3(\F_q)$ belonging to classes outside of $\mathscr{C}_1$ is verified to be smaller than that of $G_3$.
The result follows immediately.
\end{proof}
\subsection{The case where $n\geq 4$} 
In order to state our theorem we need a definition. 
\begin{defi}\label{def:div}
Suppose $g = p_1^{r_1}\cdots p_s^{r_s}$ and for each $j$ between $1$ to $s$ let $p_j^{a_j}$ be the highest power of $p_j$ dividing $q-1$. 
If $n$ is such that $p_j^{a_j}$ divides $n$ for $1 \leq j \leq s$, then we call $n$ \textit{very divisible relative to $q$}. We usually drop {\em relative to $q$}.  For example, $n$ is \textit{very divisible} whenever $q-1$ divides $n$.
\end{defi}

\begin{theo}\label{slnmainres}
 For \textit{very divisible} $n \geq 4$ the minimal faithful representation of $\SL_n(\F_q)$ is computed as follows:
 \\If we have that $g = 1$, then the maximal subgroup subgroup $P$ of class $\mathscr{C}_1$  gives the minimal faithful permutation representation of $\SL_n(\F_q)$ and $p(\SL_n(\F_q) =$  $\frac{|\SL_n(\F_q)|}{|P|} = \frac{q^n -1}{q-1}$ by \cite{Patton}.
When $g >1$ we have that the minimal faithful permutation representation of $\SL_n(\F_q)$ is given by subgroups $H_{1}, \cdots, H_s$ and has size $p(\slq) = \frac{q^n-1}{q-1}\dot (p_1^{a_1} + \cdots + p_s^{a_s})$. Further, each $H_j$ 
is the  biggest  subgroup  of P with  trivial  intersection  with  the  order $p_j$ central  subgroup. 
\end{theo}
\begin{proof}
If we have that $g =1$, then the maximal subgroup $P$ of $\slq$ is core free and gives the minimal faithful permutation representation by \cite{Patton}. In this case, we have $p(\SL_n(\F_q) =$  $\frac{|\SL_n(\F_q)|}{|P|} = \frac{q^n -1}{q-1}$.
\\ Hence, for the remainder of the proof we will assume $g > 1$.
Suppose, $\{H_1, \dots, H_\ell\}$ was a minimal faithful representation of $\slq$. Then,  $\core_G(H_1 \cap \dots \cap H_{\ell}) = \{e \}$.We consider two cases based on order considerations of the groups of our minimal faithful collection.
\\ Case $1$): No subgroup $H_i$ has order coprime to $g =p_1^{r_1}\cdots p_s^{r_s} $, so each $H_i$ has non trivial intersection with the order $p_k$ subgroup of the center for some $k \in \{1,\cdots,s\}$. However, $\core_G(H_1 \cap \dots \cap H_{\ell}) = \{e \}$ implies that for each prime factor $p_j$ dividing $g$ there exists a subgroup in our collection $H_{i_j}$ with trivial intersection with the order $p_j$ subgroup  of the center. If this was not true, then  $\core_G(H_1 \cap \dots \cap H_{\ell})$ would contain an element of order $p_j$. Write $G_j$ as the maximal subgroup of $P$ with trivial intersection with the order $p_j$ subgroup  of the center and whose order is not coprime to $g$. We will show $H_{i_j} = G_j$. Write $R$ as some subgroup  of $\slq$ with trivial intersection with the order $p_j$ subgroup of the center and whose order is not coprime to $g$. We will show $|R| \leq |G_j|$ for $1 \leq j \leq k$, so that $H_{i_j} = G_j$. Observe that $G_j$ being a subgroup of $P$ is the set of all matrices 
$$\begin{pmatrix}
  B_j & x\\ 
  0 & b_j
\end{pmatrix}$$
where, $b_j^{-1} =  \det(B_j) \in \F_q^{\times}$, $x \in \F_q^{n-1}, B_j$ is a subgroup of $ \GL_{n-1}(\F_q)
$ and $a,x$ arbitrary.
Moreover, since $G_j$ is the biggest subgroup of $P$ with trivial intersection with the order $p_j$ central subgroup,  $B_j$ must be the largest subgroup of  $\GL_{n-1}(\F_q)$, which has trivial intersection with the order $p_j$ subgroup of the center. Since $p_j $ divides $g = \gcd (q-1,n)$,  $\SL_{n-1}(\F_q)$ will not contain a central subgroup of order $p_j$. As a central element 
 $\begin{psmallmatrix}
    a & 0 & \dots & 0 \\
    \vdots & \vdots & \ddots & \vdots \\
    0 & 0 & \dots & a
  \end{psmallmatrix}$ in $\SL_{n-1}(\F_q)$ will satisfy $a^{q-1} = a^{n-1} = 1$, then  $|a| \mid \gcd(q-1,n-1)$. So, if $|a| | \gcd(q-1,n-1)$ and $\gcd(q-1,n)$, then $|a||1$ and consequently $a = 1$. Hence, we may assume that $B_j$ contains $\SL_{n-1}(\F_q)$. By lemma \ref{subgrpscontainsln},  $B_j$ has the form $\GL_{n-1}(\F_q)^t = \{ g \in \GL_{n-1}(\F_q)  \mid \det g \in A_t$\}. Also, by  Lemma (\ref{core}) $Z \cap \GL_{n-1}(\F_q)^t = Z_{\frac{t}{\gcd(n-1,t)}}$. Recalling order of $Z_t$ is $\frac{q-1}{t}$, we require $p_j \nmid \frac{\gcd(n-1,t)(q-1)}{t}$. Also $p_j |n$
 implies $p_j \nmid n-1$, so $p_j \nmid \gcd(n-1,t)$. Thus,  $p_j \nmid \frac{q-1}{t}$. We want $B_j$ to be the largest 
 possible size, which forces $t$ to be the smallest such that $p_j \nmid \frac{q-1}{t}$. Hence, $t = (q-1)_{p_j} = p_j^{a_j}$, that is the highest power of $p_j$ dividing $q-1$. Hence, $B_j = \GL_{n-1}(\F_q)^{p_j^{a_j}} = D_{p_j^{a_j}}. \SL_{n-1}(\F_q) $ and $\GL_{n-1}(\F_q) =   D_1. \SL_{n-1}(\F_q)$. Further,
 $D_1 = D_{\frac{q-1}{t}}.D_{t}$ gives $\GL_{n-1}(\F_q) = D_{\frac{q-1}{p_j^{a_j}}}. B_j$. Writing $C_j = D_{\frac{q-1}{p_j^{a_j}}}$ we have $\GL_{n-1}(\F_q) = C_j.B_j$. Let $P_j$ be the  set of all matrices
$$\begin{pmatrix}
  C_j & 0\\ 
  0 & c_j
\end{pmatrix}$$
where, $c_j^{-1} =  \det(C_j) \in \F_q^{\times}$, $x \in V_{n-1}, C_j$ as defined above,  $a,x$ arbitrary. Hence, $P = G_j.P_j$. Observe that 
$|P_j| = |C_j| = |D_{\frac{q-1}{p_j^{a_j}}}| = p_j^{a_j} = |Z_{\frac{q-1}{p_j^{a_j}}}|$. Thus, $R.Z_{\frac{q-1}{p_j^{a_j}}} = Z_{\frac{q-1}{p_j^{a_j}}}.R$
 is a subgroup of $\glq$. If $p_j^{a_j}|n$ for all $j$, then $Z_{\frac{q-1}{p_j^{a_j}}}$ is a subgroup of $\slq$ and so is $R.Z_{\frac{q-1}{p_j^{a_j}}}$. 
 But, $P$ is the largest subgroup of $\slq$, hence index  of $P$ in $\slq = \frac{|\slq|}{|G_j|.|P_j|} \leq \frac{|\slq|}{|R|.|Z_{\frac{q-1}{p_j^{a_j}}}|}$. 
 Using that $|Z_{\frac{q-1}{p_j^{a_j}}}| = |P_j|$ and simplifying we obtain, $|R| \leq |G_j|$ as claimed. Hence, $H_{i_j} = G_j$ and for the collection 
 to be minimal it must consist of $H_{i_1},\cdots,H_{i_s}$ with degree $[G:G_1] + \cdots + [G:G_s] = [G:P]\cdot (|P_1| + \cdots + |P_s|) = 
 \frac{q^n-1}{q-1}\cdot(p_1^{a_1} + \cdots p_s^{a_s})$.
\\ Case $2$): There is some $i$, such that $1 \leq i \leq \ell$ and $\gcd(|H_i|,g) = 1$.  We recall from Lemma $3.2$ of \cite{sl2}, that the only proper normal subgroups of $\slq$ are central subgroups. Using this it follows that any such subgroup $H_i$ with $\gcd(|H_i|,g) = 1$ has trivial core, because any element of the center has order dividing $g$. The intersection of any such subgroups $H_i$ will consequently have trivial core as well.  Labeling the groups in our minimal faithful collection that have order coprime to $g$ with a $'$ mark, that is  $H'_i$ and labeling the other subgroups $H_i$ in our collection without the $'$ mark we obtain the inequality $\Sigma_i [G:H'_i] + \Sigma_j [G:H_j] > \Sigma_i [G:H'_i]$. Hence, the collection with only the  subgroups $\{H_i'\}$ for some indexing set $i \in I$ not only has trivial core thus forming a faithful collection, but also it has lower degree than the minimal faithful collection comprising all the subgroups $H_i$. This is a contradiction, unless our minimal faithful collection has all subgroups of the form $H_i'$, that is each subgroup in our collection has order coprime to $g$. However, in this case we recall  again that any such subgroup has trivial intersection with the center $Z(\slq)$ and thus it has trivial core. Thus, we could replace the collection of subgroups $H_i'$ with a single subgroup $H$ of maximal order  coprime to $g$. This would give a faithful collection with order lesser than the minimal one, forcing our minimal faithful collection to comprise of a single subgroup $H$ satisfying $(|H|, g) = 1$.
Next, following the proof of Lemma $3.5$ in \cite{sl2} we write $S$ to denote $Z(\slq)$ to observe that $S \cap H = \{1\}$ and $SH/S \subset G/S$. This gives us that  $ SH/S \cong H/S \cap H \cong H  \subset G/S \cong \pslq$. In other words subgroups H of G of order coprime to g can be identified with subgroups of $\pslq$ of order coprime to  $g$. However, we know from Patton's result in \cite{Patton} that $\rho(\pslq)$ is $(q^n-1)/(q-1)$.  Combining all this information with the fact that $\pslq = \slq / Z(\slq)$ and $|\pslq| = |\slq| / g$ we have that $p(\slq) = |G|/|H| = |\pslq|g/|H|$ is at least $\rho(\pslq)g = (q^n-1)/(q-1) \times g$.  However, recalling the prime factorization of $g$ as $\Pi_jp_j^{a_j}$ we obtain $\rho(\pslq)g \geq (q^n-1)/(q-1) \times (\Pi_jp_j^{a_j}) >  (q^n-1)/(q-1) \times (\Sigma_jp_j^{a_j}) $ by Lemma \ref{lemm33}, which is the index in the previous case. Since the size of the faithful collection in case $1$ is smaller than the one provided by the subgroup $H$ in this case, we conclude that the minimal faithful permutation representation is always provided by the subgroups in case $1$.
\end{proof}

\begin{exam}\label{ex:special}
For example, if $n = 4$ and $q = 41$, then $g = \gcd(4,40) = 4$ and $n$ is not a \textit{very divisible} natural number. Utilizing the classification in Table $8.8$ of \cite{sln} we deduce that the maximal subgroup of $\SL_4(\F_{41})$ is given by $E_{41}^3:\GL_3(41)$ i.e. the subgroup whose elements have the form:
$\begin{pmatrix}
a & b & c & *\\
d & e & f & *\\
g & h & i & *\\
0 & 0 & 0 & \det \gamma^{-1}
\end{pmatrix} $
where $\gamma$ is the matrix
$
\begin{pmatrix}
a & b & c\\
d & e & f\\
g & h & i
\end{pmatrix}.$ This subgroup has index
$\frac{q^4 -1}{q -1}$. 
This subgroup has trivial core when $g=1$, but in our case $g$ is $4$. Theorem \ref{slnmainres} implies that 
the minimal faithful permutation representation of $\SL_4(\F_{41})$ is provided by the maximal subgroup $G_4$ of  $E_{41}^3:\GL_3(41)$ with trivial core, as long as $n$ is \textit{very divisible}. In our case $n$ is not \textit{very divisible}, as $40$ is divisible by $2^3$, while $n = 4$ is not. We still claim that the minimal faithful permutation representation of $\SL_4(\F_{41})$ is provided by the maximal subgroup $G_4$ of  $E_{41}^3:\GL_3(41)$ with order coprime to $4$ in this case. By Theorem \ref{slnmainres}, $G_4$ has index $\frac{(q^4-1)(q-1)_2}{(q-1)}$ and its order can be verified to be larger than the order of each maximal subgroup of $\SL_4(\F_{41})$ and thus it is the required subgroup.
\end{exam}

This examples and others like it support the following conjecture. 

\begin{conj}\label{conj:sln}
For any $n$, Theorem \ref{slnmainres} holds true for all $\SL_n(\F_q)$ for $q$ an odd prime power. 
\end{conj}

\section{A faithful collection for $\glq$}\label{sect:amincollection}

In this section we construct a faithful collection for $\glq$.

The following lemma is a consequence 
of the Lattice Isomorphism Theorem: 

\begin{lemm}\label{subgrpscontainsln}
If $H$ is a subgroup of $\glq$ which contains $\slq$, then there is  $t \mid q-1$ such that $H = \glq^t$. 
\end{lemm}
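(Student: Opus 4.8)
The plan is to reduce the statement to the well-known fact that the subgroups of the cyclic quotient $\glq/\slq \cong \F_q^\times$ are exactly the subgroups of the form $A_t$, $t \mid q-1$. First I would observe that the determinant map $\det\colon \glq \to \F_q^\times$ is surjective with kernel $\slq$, so it induces an isomorphism $\glq/\slq \xrightarrow{\sim} \F_q^\times$, and $\F_q^\times$ is cyclic of order $q-1$. A cyclic group of order $q-1$ has a unique subgroup of each order dividing $q-1$, and these are precisely the $A_t$ (with $|A_t| = (q-1)/t$) as recalled at the start of this section.

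Next, given a subgroup $H$ with $\slq \le H \le \glq$, the Lattice Isomorphism Theorem (the correspondence theorem for the normal subgroup $\slq \trianglelefteq \glq$) says that $H \mapsto H/\slq$ is an inclusion-preserving bijection between subgroups of $\glq$ containing $\slq$ and subgroups of $\glq/\slq \cong \F_q^\times$. Transporting through the determinant isomorphism, $H/\slq$ corresponds to $\det(H) \le \F_q^\times$, which must equal $A_t$ for a unique $t \mid q-1$. Then I would unwind the correspondence: $H$ is the full preimage of $A_t$ under $\det$, i.e. $H = \{g \in \glq \mid \det g \in A_t\} = \glq^t$ by the definition given just above the lemma. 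Uniqueness of $t$ follows from uniqueness of the subgroup $A_t$ of each order, or equivalently from $t = [\glq : H]$ via \eqref{indext}.

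There is essentially no obstacle here; the only thing to be slightly careful about is making sure the correspondence is stated for subgroups \emph{containing} $\slq$ (not merely normal ones), which is exactly what the Lattice Isomorphism Theorem provides since $\slq$ is normal in $\glq$. The one-line verification that $\glq^t = \det^{-1}(A_t)$ and that distinct $t$ give distinct $\glq^t$ (because $[\glq:\glq^t] = t$) completes the argument.
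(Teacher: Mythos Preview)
Your proof is correct and follows exactly the approach the paper intends: the paper simply states that the lemma is a consequence of the Lattice Isomorphism Theorem, and you have spelled out precisely that argument via the determinant isomorphism $\glq/\slq \cong \F_q^\times$ and the classification of subgroups of a cyclic group.
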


The following lemma is important:
\begin{lemm}\label{core}
$Z \cap \GL_n(\F_q)^t = Z_\frac{t}{\gcd(n,t)}$.
\end{lemm}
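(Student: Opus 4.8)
The plan is to compute the intersection $Z \cap \GL_n(\F_q)^t$ explicitly, using the fact that an element of $Z$ is a scalar matrix $\mathrm{diag}(a,\dots,a)$ with $a \in \F_q^\times$, whose determinant is $a^n$. Thus $\mathrm{diag}(a,\dots,a) \in \GL_n(\F_q)^t$ if and only if $a^n \in A_t$. Writing $a = \varpi^k$ for the fixed generator $\varpi$ of $\F_q^\times$, the condition $a^n \in A_t = \langle \varpi^t\rangle$ becomes $t \mid kn$ in $\Z/(q-1)\Z$, i.e. $\frac{t}{\gcd(n,t)} \mid k$. Hence the set of admissible $a$ is exactly $\langle \varpi^{t/\gcd(n,t)}\rangle = A_{t/\gcd(n,t)}$, which by the definition of $Z_\bullet$ means $Z \cap \GL_n(\F_q)^t = Z_{t/\gcd(n,t)}$.

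More carefully, the key step is the congruence manipulation: from $t \mid kn$ one divides through by $d := \gcd(n,t)$ to get $\frac{t}{d} \mid k\frac{n}{d}$, and since $\gcd(\frac{t}{d}, \frac{n}{d}) = 1$ one concludes $\frac{t}{d} \mid k$. Conversely $\frac{t}{d}\mid k$ clearly gives $t \mid k\frac{n}{d}\cdot d = kn$ once one notes $t \mid \frac{t}{d}\cdot d$ — actually one wants $t \mid kn$, and $kn = k\frac{n}{d}d$; since $\frac{t}{d}\mid k$ write $k = \frac{t}{d}\ell$, then $kn = \frac{t}{d}\ell \cdot \frac{n}{d}\cdot d = t\ell\frac{n}{d}$, which is divisible by $t$. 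So the equivalence $a^n \in A_t \iff a \in A_{t/\gcd(n,t)}$ holds, and identifying $\F_q^\times \cong \Z/(q-1)\Z$ via $\varpi$ makes every step a statement about divisibility of integers modulo $q-1$.

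I would then note that one should check $\frac{t}{\gcd(n,t)}$ is indeed a divisor of $q-1$ so that the notation $Z_{t/\gcd(n,t)}$ (and $A_{t/\gcd(n,t)}$) is meaningful: this holds because $\frac{t}{\gcd(n,t)}$ divides $t$, which divides $q-1$ by hypothesis. Alternatively, one can phrase the whole argument in terms of orders: $Z_s$ is the unique subgroup of $Z$ of order $(q-1)/s$ (as recalled in the excerpt), so it suffices to show $|Z \cap \GL_n(\F_q)^t| = (q-1)\gcd(n,t)/t$, which follows from counting the $a \in \F_q^\times$ with $a^n \in A_t$: the $n$-th power map sends $\F_q^\times$ onto $A_{\gcd(n,q-1)}$ with kernel of size $\gcd(n,q-1)$, and a short index computation then gives the count.

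There is no real obstacle here; the statement is essentially a bookkeeping exercise in cyclic groups. The only place to be slightly careful is the direction $\frac{t}{d}\mid k \implies t \mid kn$ and the reverse, where one must correctly exploit $\gcd(\frac{t}{d},\frac{n}{d}) = 1$; getting the coprimality into play is the one substantive (if elementary) point. I would present the argument in the multiplicative-group language with the explicit isomorphism $\F_q^\times \cong \Z/(q-1)\Z$ to keep it transparent.
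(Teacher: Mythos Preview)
Your proposal is correct and follows essentially the same approach as the paper: both reduce the question to determining for which $a\in\F_q^\times$ one has $a^n\in A_t$, and then solve this by a divisibility argument in the cyclic group $\F_q^\times\cong\Z/(q-1)\Z$. Your version is in fact somewhat cleaner, since you invoke the determinant condition $\det(\mathrm{diag}(a,\dots,a))=a^n\in A_t$ directly, whereas the paper passes through the product decomposition $\GL_n(\F_q)^t=D_t\cdot\SL_n(\F_q)$ and counts pairs $(k,\ell)$ before arriving at the same count.
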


\begin{proof}
 $Z \cap \glq^t$ is a subgroup of $Z$, so it must be of the form $Z_s$ for some $s \mid (q-1)$. 
We need to determine the diagonal elements of the form  $$\begin{pmatrix}
    a & 0 & \dots & 0 \\
    0 & a & \dots & 0 \\
    \vdots & \vdots & \ddots & \vdots \\
    0 & 0 & \dots & a
  \end{pmatrix}
$$ that can be written in the form 
$$\begin{pmatrix}
    \varpi^{kt} & 0 & \dots & 0 \\
    0 & 1 & \dots & 0 \\
    \vdots & \vdots & \ddots & \vdots \\
    0 & 0 & \dots & 1
  \end{pmatrix}
  \begin{pmatrix}
    a_1 & 0 & \dots & 0 \\
    0 & a_2 & \dots & 0 \\
    \vdots & \vdots & \ddots & \vdots \\
    0 & 0 & \dots & a_n
  \end{pmatrix}
$$
for some integer  $0 \leq k < (q-1)/t$, such that  $a_1,a_2,\cdots,a_n  \in \F_q ^{\times}$ and  $a_1.a_2\cdots a_n =1$ Observe that, 
$\varpi^{kt}a_1 = a_2 = \cdots = a_n = a$ gives us $  a^{n}(\varpi^{kt})^{-1} = 1$, which means $ \varpi^{kt} = a^n$.  
Letting $a = \varpi^{\ell}$ we obtain $\varpi^{kt} = \varpi^{n\ell}$. This is true if and only if  $kt \equiv n\ell$ mod $q-1$. 
There will be a solution for $\ell$ if and only if $\gcd(n,q-1)$ divides $kt$ if and only if $\frac{\gcd(n,q-1)}{\gcd(n,q-1,t)}$ 
divides $\frac{t}{\gcd(n,q-1,t)}k$. But, $\frac{\gcd(n,q-1)}{\gcd(n,q-1,t)}$ and $\frac{t}{\gcd(n,q-1,t)}$ are coprime. Hence, 
$\gcd(n,q-1)$ divides $kt$ if and only if $\frac{\gcd(n,q-1)}{\gcd(n,q-1,t)}$ 
divides k. But, $t |q-1$ implies $\gcd(n,q-1,t) = \gcd(n,t)$. Combining everything  $\varpi^{kt} = \varpi^{n\ell}$ has a solution for 
$\ell$ iff. $\frac{\gcd(n,q-1)}{\gcd(n,t)}$ divides k. Observe $0 \leq k < \frac{q-1}{t}$. Hence, the number of possibilities for
 $k$ are $\frac{\frac{q-1}{t}}{\frac{\gcd(n,q-1)}{\gcd(n,t)}} = \frac{(q-1)\gcd(n,t)}{t\gcd(n,q-1)}$.

Dividing $kt \equiv n\ell$ mod $q-1$ throughout by $\gcd(n,q-1)$ we obtain
\begin{equation*}
    \ell \equiv  \big(\frac{n}{\gcd(n,q-1)} \big)^{-1} \frac{kt}{\gcd(n,q-1)} \mod \frac{q-1}{\gcd(n,q-1)}
\end{equation*}
Hence, for each value of $k$ there will be $\gcd(n,q-1)$ corresponding values for $\ell$. Thus, the total number of possibilities for $a$ i.e. for $k\ell$ are  $\frac{(q-1)\gcd(n,t)}{t\gcd(n,q-1)}. \gcd(n,q-1) = \frac{(q-1)\gcd(n,t)}{t}$. Thus, we conclude that $Z \cap \GL_n(\F_q)^t = Z_\frac{t}{\gcd(n,t)}$ as claimed.
\end{proof}
To state our results we need a couple of pieces of notation. 
Let $P$,$Q$, $m_{n,q}$, and $T_{n,q}$ be defined in the same manner as in  \S\ref{note}.
\begin{lemm}\label{lemm04}
We have 
$$
\core_{\glq}(Q\cdot D_1) = Z_{(q-1)_{n,q}} \text{ and } \  \core_{\glq}{P\cdot D_1} = Z. 
$$
\end{lemm}
\begin{proof}
Observe that $\SL_n(\F_q)$ is not contained in $Q.D_1$. So, the core must be the intersection of $Q.D_1$ with the center $Z$ of $\GL_n(\F_q)$. For  an element of $Q.D_1$ to be in the center we require the element
$ D = \begin{pmatrix}
  C & x\\ 
  0 & c
\end{pmatrix}$ 
in $Q$ to be diagonal. 
In particular, If $D =  $diag($c_{1}, \cdots, c_{n-1}$), then multiplying this by an element diag($b,1,\cdots,1$) of $D_1$ should give us a diagonal matrix. That is, $c_{1}b = c_{2} = \cdots = c_{n-1} = c$ and using $c^{-1} = $ det$C$ we have the equation $c_1c^{n-2} = c^{-1}$ i.e. $c^{n-1}b^{-1} = c^{-1}$ or $c^n = b$. Note that $b$ is any element in $\F_q^*$, so as long as $c \neq 0$  pick $b = c^n$ and get the element diag$(c,c,\cdots,c$) in $Q.D_1 \cap Z$. So, the core of $Q.D_1$ is $Z_{(q-1)_{n,q}}$.
By similar considerations we obtain $\core_{\glq}{P\cdot D_1} = Z$.
\end{proof}
We can now give a construction of a faithful permutation representation which we will later prove to be minimal for $n = 2$ and $3$. 
\begin{prop}\label{fcoll}
Write $q-1 = p_1^{a_1} p_2^{a_2}\cdots p_s^{a_s}q_{1}^{b_{1}}\cdots q_{t}^{b_t}$, with $g = \gcd(n,q-1) = p_1^{r_1} p_2^{r_2} \cdots p_s ^{r_s}$, $r_1, \dots, r_s \geq 1$ and the $p_i's$ and $q_i's$ distinct. Then the collection of subgroups
$$
\{ Q\cdot D_1, \glq^{q_1^{b_1}}, \dots, \glq^{q_t^{b_t}} \}
$$
is a faithful collection. The corresponding faithful permutation representation has size 
$$\frac{q^n-1}{q-1}\cdot (q-1)_{n,q} + T_{n,q}(q-1).$$ 
If $g = 1$ then the collection of subgroups
$$
\{ P\cdot D_1, \glq^{q_1^{b_1}}, \dots, \glq^{q_t^{b_t}} \}
$$
is a faithful collection. The corresponding faithful permutation representation has size $$\frac{q^n-1}{q-1} + T_{n,q}(q-1).$$ 
\end{prop}
\begin{proof}
We need to show that the subgroup $ U = Q\cdot D_1 \cap \glq^{q_1^{b_1}} \cap \dots \cap \glq^{q_t^{b_t}}$ is core-free. By Lemma \ref{lemm04}, $\core_\glq(Q \cdot D_1)$ is a subset of $Z$, 
so the core of $U$ is a subgroup of $Z$. Since any subgroup of $Z$ is normal, we just need to compute the intersection $U \cap Z$. Lemmas \ref{core} and \ref{lemm04} implies that 
this intersection is $Z_{(q-1)_{n,q}} \cap Z_{q_1^{b_1}} \cap \dots Z_{q_t^{b_t}}$. As $\lcm((q-1)_{n,q}, q_1^{b_1}, \dots, q_t^{b_t}) = q-1$, Equation \eqref{capst}  says 
$$
Z_{(q-1)_{n,q}} \cap Z_{q_1^{b_1}} \cap \dots Z_{q_t^{b_t}} = Z_{q-1} = \{e\}. 
$$
This means that the collection is faithful. The size of the corresponding permutation representation is equal to 
$$
[\glq: Q\cdot D_1] + [\glq: \glq^{q_1^{b_1}} ] + \dots + [\glq: \glq^{q_t^{b_t}} ]
$$
$$
= \frac{q^n-1}{q-1}\cdot (q-1)_{n,q} + q_1^{b_1} + \dots + q_t^{b_t}
$$
from \cite{Patton} and \eqref{indext}. 

By symmetry, when $g = 1$, we need to show that the subgroup $ U' = P\cdot D_1 \cap \glq^{q_1^{b_1}} \cap \dots \cap \glq^{q_t^{b_t}}$ is core-free. By Lemma \ref{lemm04}, $\core_\glq(P \cdot D_1)$ is  $Z$, so the core of $U$ is a subgroup of $Z$. Since any subgroup of $Z$ is normal, we just need to compute the intersection $U \cap Z$. Lemmas \ref{core} and \ref{lemm04} implies that 
this intersection is $Z \cap Z_{q_1^{b_1}} \cap \dots Z_{q_t^{b_t}}$. As $\lcm(1, q_1^{b_1}, \dots, q_t^{b_t}) = q-1$, Equation \eqref{capst}  says 
$$
Z \cap Z_{q_1^{b_1}} \cap \dots Z_{q_t^{b_t}} = Z_{q-1} = \{e\}. 
$$
This means that the collection is faithful. The size of the corresponding permutation representation is equal to 
$$
[\glq: P\cdot D_1] + [\glq: \glq^{q_1^{b_1}} ] + \dots + [\glq: \glq^{q_t^{b_t}} ]
$$
$$
= \frac{q^n-1}{q-1}+ q_1^{b_1} + \dots + q_t^{b_t}
$$
after using Equations \cite{Patton} and \eqref{indext}. This finishes the proof of the proposition. 
\end{proof}

\begin{coro}\label{coro25}
We have 
$$
p(\glq) \leq \frac{q^n-1}{q-1}\cdot (q-1)_{n,q} + T_{n,q}(q-1). 
$$
\end{coro}

We make the following conjecture: 

\begin{conj}\label{conj11}
Let $q \geq 5$ be an odd prime power.  Then if $g=1$, 
$$p(\GL_n(\F_q)) = \frac{q^n-1}{q-1} + T_{n,q}( q-1),$$
whereas if $g > 1$, then 
$$p(\GL_n(\F_q)) = p(\SL_n(\F_q)) + T_{n,q}( q-1). $$
\end{conj}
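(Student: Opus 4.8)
The plan is to prove the identity $p(\GL_n(\F_q)) = p(\SL_n(\F_q)) + T_{n,q}(q-1)$ by separately establishing the two inequalities; together with the theorem above (and with $p(\SL_n(\F_q)) = \tfrac{q^n-1}{q-1}$ when $g = 1$, since then $\SL_n(\F_q) = \mathrm{PSL}_n(\F_q)$ is simple and its minimal faithful action is on lines) this yields Conjecture~\ref{conj11}. I take $n \ge 4$; the cases $n = 2, 3$ are the separate explicit computations. Throughout I use the description $p(G) = \min \sum_i [G:H_i]$ over families of subgroups with $\bigcap_i \core_G(H_i) = 1$; the fact that any proper subgroup of $\SL_n(\F_q)$ has index at least $\tfrac{q^n-1}{q-1}$; and two facts about $G = \GL_n(\F_q)$: a normal subgroup of $G$ not containing $\SL_n$ is central, hence lies in $Z := Z(G) \cong C_{q-1}$ (the scalars), while every normal subgroup containing $\SL_n$ has the form $\det^{-1}(S)$ for a unique $S \le C_{q-1}$. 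Set $g' = \prod_i p_i^{a_i}$, and let $W \le Z$ be the subgroup of order $(q-1)/g' = \prod_j q_j^{b_j}$; note $Z(\SL_n) \cong C_g$ is the unique order-$g$ subgroup of $Z$, and each of the minimal subgroups $C_{p_i} \subset C_g$ and $C_{q_j} \subset W$ is central in $G$.

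For the upper bound, I would start from an optimal family $\tilde H_1, \dots, \tilde H_m$ for $\SL_n(\F_q)$ and first check that each $\tilde H_l$ — which by the theorem lies in the line-stabilizer parabolic — extends to a subgroup $H_l \le G$ with $H_l \cap \SL_n = \tilde H_l$ and $\det(H_l) = \F_q^\times$, using the semidirect decomposition $P = (P \cap \SL_n) \rtimes A$ of the parabolic in $G$, where $A \cong C_{q-1}$ centralizes the $\GL_{n-1}$-Levi and merely scales the unipotent radical (so that $A$-stable $\tilde H_l$ give $H_l = \tilde H_l A$). Then $G/H_l \cong \SL_n/\tilde H_l$ as $\SL_n$-sets, so $[G:H_l] = [\SL_n:\tilde H_l]$, and a short computation via $\core_G(H_l) \cap \SL_n \subseteq \core_{\SL_n}(\tilde H_l)$ shows that $M := \bigcap_l \core_G(H_l)$ meets $\SL_n$ trivially, hence is central of order prime to $g$, hence $M \le W$. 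Adjoining the normal subgroups $H'_j := \det^{-1}(S_j)$ with $[Z:S_j] = q_j^{b_j}$ — each of index $q_j^{b_j}$ and not containing $C_{q_j}$ — kills $M$: since $\det$ is injective on $M$ (its kernel $C_g$ is disjoint from $M$) and $\bigcap_j S_j$ has order prime to every $q_j$, one gets $M \cap \bigcap_j H'_j = 1$. The resulting faithful family has degree $p(\SL_n) + \sum_j q_j^{b_j} = p(\SL_n) + T_{n,q}(q-1)$.

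For the lower bound, I would take an optimal faithful family $H_1, \dots, H_k$ for $G$ and split it into Type A ($\SL_n \subseteq H_i$, so $H_i = \det^{-1}(S_i)$ is its own core) and Type B ($\SL_n \not\subseteq H_i$, so $\core_G(H_i) \le Z$). Because $\bigcap_{\mathrm A}\core_G(H_i) \supseteq \SL_n \supseteq C_g$ while the full intersection is $1$, the group $M_B := \bigcap_{\mathrm B}\core_G(H_i) \le Z$ meets $C_g$ trivially. The first step is an \emph{offloading} move: whenever a Type B constituent has core missing some $C_{q_j}$, replace $H_i$ by $H_i C_{q_j}$ and adjoin $\det^{-1}(S_j)$ as above; using the crude bounds $[G:H_i] \ge \tfrac{q^n-1}{q-1}$ (a Type B constituent meets $\SL_n$ in a proper subgroup) and $q_j^{b_j} \le q-1$, the total degree does not increase, and one checks that faithfulness is preserved. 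After this, every $q_j$ is separated by some Type A constituent; since $\det^{-1}(S_i)$ separates $C_{q_j}$ precisely when $q_j^{b_j} \mid [G:H_i]$ and the $q_j$ are distinct primes, the Type A degrees sum to at least $\sum_j q_j^{b_j} = T_{n,q}(q-1)$. The second and crucial step is that the restricted family $\{H_i \cap \SL_n\}_{\mathrm B}$ is faithful on $\SL_n(\F_q)$: its kernel $L := \bigcap_{\mathrm B}\core_{\SL_n}(H_i \cap \SL_n)$ is a normal subgroup of $\SL_n$, not all of it (no Type B constituent contains $\SL_n$), hence contained in the cyclic group $Z(\SL_n)$; and for each $p_i$ the centrality of $C_{p_i}$ in $G$ gives $C_{p_i} \subseteq L \iff C_{p_i} \subseteq H_i$ for all $i \iff C_{p_i} \subseteq M_B$, which fails since $M_B \cap C_g = 1$, so $L$ avoids every $C_{p_i}$ and is therefore trivial. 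Hence the Type B degrees sum to at least $p(\SL_n)$, and adding the two contributions gives $p(G) \ge p(\SL_n) + T_{n,q}(q-1)$.

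The main obstacle, and the reason the statement remains conjectural in general, is twofold. First is the interaction of $\core_G$ with $\core_{\SL_n}$: in general $\core_{\SL_n}(H \cap \SL_n)$ is strictly larger than $\core_G(H) \cap \SL_n$, so naively restricting a faithful $G$-family to $\SL_n$ need not stay faithful — this is exactly the gap in \cite{EDH}. The argument above is saved by the observation that the only normal content one still has to separate inside $\SL_n$ is central in $G$, and for a central subgroup $C$ one has the clean dichotomy $C \subseteq \core_G(H) \iff C \subseteq H$; converting this into faithfulness of the restricted Type B family, while correctly tracking which primes are separated by Type A versus Type B constituents (the offloading step), is the heart of the proof, and on the upper-bound side the mirror difficulty is that subgroups of $\SL_n$ do not automatically extend to $G$ at the same index. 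Second, the identity presupposes the value of $p(\SL_n(\F_q))$: for $g = 1$ this is the classical $\tfrac{q^n-1}{q-1}$ and the result would be unconditional, but for $g > 1$ the closed form comes from the theorem above and is therefore currently available only for very divisible $n$; removing the very-divisibility hypothesis — equivalently, proving the conjectured value of $p(\SL_n(\F_q))$ for all $n$ — is what separates the plan above from a proof of Conjecture~\ref{conj11} in full.
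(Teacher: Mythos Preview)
This statement is a conjecture: the paper does not prove it in general.  What the paper establishes is (i) the upper bound, via the explicit faithful collection of Proposition~\ref{fcoll} and Corollary~\ref{coro25}, and (ii) the full statement for $n=2,3$, by pinning down the unique ``bad'' constituent $H_1$ using the subgroup classification of $\SL_2$ and $\SL_3$ (Lemmas~\ref{lem5.5}--\ref{lemm5.8} and \ref{lemm36}--\ref{lemm38}), computing its core, and then solving an optimization problem over the remaining constituents.

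Your strategy is genuinely different and more general.  The Type~A/Type~B split, the offloading move (trading a Type~B separation of $C_{q_j}$ for the cheap normal subgroup $\det^{-1}(S_j)$), and above all the argument that the restricted Type~B family $\{H_i\cap\SL_n\}$ is faithful on $\SL_n$---using that any surviving normal content is central in $G$, whence the clean equivalence $C_{p_i}\subseteq\core_{\SL_n}(H_i\cap\SL_n)\iff C_{p_i}\subseteq H_i\iff C_{p_i}\subseteq\core_G(H_i)$---together give a lower bound $p(\GL_n)\ge p(\SL_n)+T_{n,q}(q-1)$ that does not use any subgroup classification.  I checked the details (offloading preserves faithfulness because $\core_G(H_iC_{q_j})=(Z\cap H_i)C_{q_j}$ inside the cyclic $Z$; the degree inequality holds since $[G:H_i]\ge (q^n-1)/(q-1)\ge q_j^{b_j+1}/(q_j-1)$ for $n\ge 3$; multiplying $H_i$ by $C_{q_j}$ does not absorb any $C_{p_i}$ since $p_i\ne q_j$; the Type~A contribution is bounded below via Lemma~\ref{lemm33}) and they appear sound.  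This would already go well beyond the paper, which openly says that ``computing $p(\GL_n(\F_q))$ for an arbitrary $n$ seems to require a new idea.''

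One clarification on your closing paragraph: the conjecture is stated as a \emph{relation} between $p(\GL_n)$ and $p(\SL_n)$, so your lower-bound argument is unconditional for that relation; the dependence on Theorem~\ref{slnmainres} enters only through your \emph{upper} bound, where you need each member of an optimal $\SL_n$-family to extend to $G$ at the same index (your $A$-stability check).  The paper's own upper bound (Corollary~\ref{coro25}) is phrased in terms of $(q-1)_{n,q}$ rather than $p(\SL_n)$ and therefore also leans on the $\SL_n$ result to match the conjectured right-hand side.  So the true remaining obstacle is exactly the one you name first---controlling the extension step---rather than needing the explicit value of $p(\SL_n)$.
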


We will verify this conjecture for $n=2$ and $n=3$ in \S \ref{sec:mingl2}  and \S\ref{sec:mingl3}, respectively.  That the conjecture is true for very divisible $n$ is a consequence of Lemma \ref{lemma35} below. 

\section{Minimal faithful collections} \label{sec:generallemmas}
In this section we will prove some general lemmas about minimal faithful collections that will help us determine these collections for $n = 2$ and $3$ in the next two sections.
\begin{lemm}\label{lemm31}
Let $H$ be a subgroup of $\glq$. Then 
$$
[H: H\cap \slq] = \frac{|H \cdot \slq|}{|\glq|}\cdot (q-1). 
$$
\end{lemm}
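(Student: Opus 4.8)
The statement is a direct application of the second isomorphism theorem combined with orbit--counting (index) bookkeeping. The plan is to express both sides in terms of the orders $|H|$, $|H \cap \slq|$, and $|H \cdot \slq|$, and then reconcile them using $|\glq| = (q-1)|\slq|$.

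First I would invoke the second isomorphism theorem for the subgroup $H$ and the normal subgroup $\slq \trianglelefteq \glq$: since $\slq$ is normal in $\glq$, the product $H \cdot \slq$ is a subgroup, and
\[
H / (H \cap \slq) \;\cong\; (H \cdot \slq)/\slq .
\]
Taking orders, this gives $[H : H \cap \slq] = [H\cdot\slq : \slq] = |H\cdot\slq|/|\slq|$. Next I would substitute $|\slq| = |\glq|/(q-1)$, which holds because the determinant map $\glq \to \F_q^\times$ is surjective with kernel $\slq$. This turns the right-hand side into $\dfrac{|H\cdot\slq|}{|\glq|}\cdot (q-1)$, which is exactly the claimed formula. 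Both steps are completely routine, so the proof is only a few lines.

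There is no real obstacle here; if anything, the only point worth a sentence is the verification that $H \cdot \slq$ is genuinely a subgroup (so that $|H\cdot\slq|$ makes sense and the isomorphism theorem applies), which is immediate from normality of $\slq$. One could alternatively phrase it entirely via the determinant: $\det$ maps $H$ onto the subgroup $\det(H) \le \F_q^\times$ with kernel $H \cap \slq$, so $[H : H\cap\slq] = |\det(H)|$; and $|H\cdot\slq|/|\glq| = |\det(H)|/(q-1)$ by the same reasoning applied to $H\cdot\slq$, whose determinant image also equals $\det(H)$. Either route yields the identity, and I would present the isomorphism-theorem version as the cleaner of the two.
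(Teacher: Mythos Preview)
Your proof is correct and essentially identical to the paper's own argument. The paper uses the product formula $|H\cdot\slq| = |H|\,|\slq|/|H\cap\slq|$ (equivalently, the second isomorphism theorem) to get $[H:H\cap\slq]=|H\cdot\slq|/|\slq|$, and then substitutes $|\glq|/|\slq|=q-1$; your write-up is the same computation phrased slightly more explicitly via the isomorphism $H/(H\cap\slq)\cong (H\cdot\slq)/\slq$.
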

\begin{proof}
We observe that $H\cdot \slq$ is a subgroup of $\glq$. We have 
$$
|H \cdot \slq| = \frac{|H| \cdot |\slq|}{|H \cap \slq|}.  
$$
Hence, 
$$
[H: H\cap \slq] = \frac{|H|}{|H \cap \slq|}
$$
$$
 = \frac{|\glq|}{[\glq: H \cdot \slq]} \cdot \frac{1}{|\slq|}
$$
$$
=  \frac{q-1}{[\glq: H \cdot \slq]}, 
$$
as claimed. 
\end{proof}
\begin{lemm}\label{lemm32}
Let $H$ be a subgroup of $\glq$ such that $H \cap \slq$ is a  core free subgroup of $\slq$. Then,
$$
[\glq: H] \geq p(\slq) \cdot [\glq: H \cdot \slq]. 
$$
\end{lemm}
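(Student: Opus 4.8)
The plan is to build a faithful collection for $\slq$ out of the single core-free subgroup $H$ of $\glq$, and then invoke the minimality of $p(\slq)$. The natural candidate is the intersection $H \cap \slq$, but one copy of $\slq/(H\cap\slq)$ need not be faithful for $\slq$; so instead I would take the family obtained by conjugating $H$ by a set of coset representatives of $H\cdot\slq$ in $\glq$, intersecting each with $\slq$. Concretely, let $x_1,\dots,x_k \in \glq$ be representatives for the left cosets of $H\cdot\slq$, where $k = [\glq: H\cdot\slq]$, and set $K_i = (x_i H x_i^{-1}) \cap \slq$. Since $\slq$ is normal in $\glq$, each $K_i$ is a subgroup of $\slq$, and $[\slq : K_i] = [x_iHx_i^{-1} : x_iHx_i^{-1}\cap\slq] = [H : H\cap\slq]$, which by Lemma \ref{lemm31} equals $(q-1)/[\glq:H\cdot\slq] = (q-1)/k$. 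Thus $\sum_{i=1}^k [\slq:K_i] = k \cdot (q-1)/k = q-1$; more importantly, each index equals $[\glq:H]/k$ once one checks $[\glq:H] = [\glq:H\cdot\slq]\cdot[H\cdot\slq:H] = k\cdot[\slq:H\cap\slq]$ via the second isomorphism theorem. Wait — I should be careful: the quantity I actually want to bound below is $[\glq:H]$, and $[\glq:H] = k\cdot[\slq:K_i]$ for each $i$, so it suffices to show the collection $\{K_1,\dots,K_k\}$ is faithful in $\slq$, because then $p(\slq) \le \sum_i [\slq:K_i] = k\cdot[\slq:K_1] = [\glq:H]$, giving $p(\slq) \le [\glq:H]/[\glq:H\cdot\slq]$ — hmm, that is the wrong direction.

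Let me reconsider the bookkeeping, since the inequality to prove is $[\glq:H] \ge p(\slq)\cdot[\glq:H\cdot\slq]$. Rearranged, this says $p(\slq) \le [\glq:H]/[\glq:H\cdot\slq] = [H\cdot\slq:H] = [\slq:H\cap\slq]$ (the last equality again by the second isomorphism theorem, $[H\cdot\slq:H]=[\slq:\slq\cap H]$). So the genuine claim is simply: \emph{if $H$ is core-free in $\glq$, then the single subgroup $H\cap\slq$ is core-free in $\slq$}, whence $p(\slq)\le[\slq:H\cap\slq]$. This is the heart of the matter. The argument: $\core_{\slq}(H\cap\slq) = \bigcap_{x\in\slq} x(H\cap\slq)x^{-1} = \bigcap_{x\in\slq}(xHx^{-1}\cap\slq) = \big(\bigcap_{x\in\slq}xHx^{-1}\big)\cap\slq$, using normality of $\slq$. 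This is a subgroup of $\slq$ that is normalized by all of $\slq$; to conclude it is normal in $\glq$ — and hence contained in $\core_{\glq}(H)=\{e\}$ — I need it to also be normalized by a set of representatives for $\glq/\slq$. Since $\glq/\slq$ is cyclic generated by the image of the diagonal matrix $d=\mathrm{diag}(\varpi,1,\dots,1)$, it suffices to check $d\big(\bigcap_{x\in\slq}xHx^{-1}\cap\slq\big)d^{-1}$ is contained in the same group. Conjugating by $d$ permutes the factors $xHx^{-1}$, $x\in\slq$, only if $d$ normalizes $\slq$ and $dxHx^{-1}d^{-1} = (dxd^{-1})H'(\cdots)$ — this requires care because $d\notin\slq$. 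The clean fix: $\bigcap_{x\in\slq}xHx^{-1}$ need not be normal in $\glq$, so instead work directly with $N:=\core_{\glq}(H\cap\slq)$ and show $N\subseteq\core_{\glq}(H)$. Indeed $N$ is by definition normal in $\glq$ and $N\subseteq H\cap\slq\subseteq H$; hence $N\subseteq\core_{\glq}(H)=\{e\}$. That is immediate and requires none of the conjugation gymnastics above.

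So the streamlined proof is: (i) $H\cap\slq$ is core-free in $\slq$, because $\core_{\glq}(H\cap\slq)$ is a normal subgroup of $\glq$ lying inside $H$, hence inside $\core_{\glq}(H)=\{e\}$, and $\core_{\slq}(H\cap\slq)\subseteq\core_{\glq}(H\cap\slq)$ — wait, that last containment also needs justification, since a priori $\core_{\slq}$ could be larger. But $\core_{\slq}(H\cap\slq) = \big(\bigcap_{x\in\slq}xHx^{-1}\big)\cap\slq$ is normalized by $\slq$; combined with being normalized by $d$ (which one does have to verify, using that conjugation by $d$ stabilizes $\slq$ and sends the family $\{x(H\cap\slq)x^{-1}:x\in\slq\}$ to $\{y(dHd^{-1}\cap\slq)y^{-1}:y\in\slq\}$ — and here is the subtlety, since $dHd^{-1}\ne H$ in general). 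The honest route is therefore: $\core_{\slq}(H\cap\slq)$ is contained in the \emph{normal closure} business — the cleanest correct statement is $\core_{\glq}(H\cap\slq)\subseteq\{e\}$ as argued, and separately $\core_{\slq}(H\cap\slq)$ is core-free because... Actually the safe move is to instead run the conjugation argument (ii): the collection $\{x_iHx_i^{-1}\cap\slq\}_{i=1}^k$ for $x_i$ ranging over coset reps of $\glq/(H\cdot\slq)$ — or even all of $\glq/\slq$ — has trivial common core in $\slq$ since $\bigcap_i x_i(H\cap\slq)x_i^{-1}\supseteq$ something with trivial $\glq$-core; and each index is $[\slq:H\cap\slq]$, and there are at most $[\glq:H\cdot\slq]$ distinct such subgroups (conjugates of $H$ by elements in the same coset of $H\cdot\slq$ intersect $\slq$ in subgroups that already appear). \textbf{The main obstacle is precisely this step}: pinning down exactly which conjugates of $H\cap\slq$ are needed to kill the core inside $\slq$ and verifying their number is at most $[\glq:H\cdot\slq]$, so that $p(\slq)\le[\glq:H\cdot\slq]\cdot[\slq:H\cap\slq] = [\glq:H\cdot\slq]\cdot\big([\glq:H]/[\glq:H\cdot\slq]\big) = [\glq:H]$ — rearranging to the desired $[\glq:H]\ge p(\slq)\cdot[\glq:H\cdot\slq]$, once I have correctly accounted for the fact that $[\slq:H\cap\slq]=[H\cdot\slq:H]$ via Lemma \ref{lemm31} (which gives $[H:H\cap\slq]=(q-1)/[\glq:H\cdot\slq]$, and $[\glq:H]=[\glq:H\cdot\slq]\cdot[H\cdot\slq:H]$ with $[H\cdot\slq:H]=[\slq:H\cap\slq]$). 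I would present the argument in the second form (conjugation by coset representatives), as it makes the factor $[\glq:H\cdot\slq]$ appear transparently and sidesteps the normality subtleties of the first approach.
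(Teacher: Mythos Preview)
Your first reduction is exactly the paper's route: rewrite the inequality as $p(\slq)\le[\slq:H\cap\slq]$ and then claim that $H\cap\slq$ is core-free in $\slq$. The paper simply asserts this (``Since $H$ is core free, $H\cap\slq$ is a core free subgroup of $\slq$''), and your worry that $\core_{\slq}(H\cap\slq)$ could strictly contain $\core_{\glq}(H\cap\slq)$ is perfectly legitimate --- indeed that implication fails for general $N\triangleleft G$ (take $G=S_4$, $N=V_4$, $H=\langle(12)(34)\rangle$). What makes it work here is the structure of $\slq$: every normal subgroup of $\slq$ is either all of $\slq$ or contained in $Z(\slq)$, and in either case is normal in $\glq$. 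Hence $\core_{\slq}(H\cap\slq)$, being normal in $\slq$ and contained in $H$, is normal in $\glq$ and contained in $H$, so lies in $\core_{\glq}(H)=\{e\}$. That one sentence closes your gap and completes the paper's argument.

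Your fallback conjugation argument, however, does not prove the lemma. Even granting that the collection $\{x_iHx_i^{-1}\cap\slq\}_{i=1}^{k}$ is faithful for $\slq$ (which itself runs into the same normality issue), you obtain only
\[
p(\slq)\ \le\ \sum_{i=1}^{k}[\slq:K_i]\ =\ k\cdot[\slq:H\cap\slq]\ =\ [\glq:H],
\]
i.e.\ $[\glq:H]\ge p(\slq)$. This is weaker than the claimed $[\glq:H]\ge p(\slq)\cdot[\glq:H\cdot\slq]$ by exactly the factor $k=[\glq:H\cdot\slq]$; your final ``rearranging to the desired'' step is an error. So the second approach should be abandoned, and the first completed with the observation about normal subgroups of $\slq$.
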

\begin{proof}
Since 
$H \cap \slq$ is a core free subgroup of $\slq$ we have the inequality
\begin{equation}\label{eq31}
[\slq: H \cap \slq] \geq p(\slq). 
\end{equation}
Next, 
$$
[\glq:H] = \frac{[\glq: H \cap \slq]}{[H: H \cap \slq]}. 
$$
By Lemma \ref{lemm31} this expression is equal to 
$$
= \frac{[\glq: H \cap \slq]}{q-1} \cdot [\glq: H \cdot \slq]
$$
$$
= [\slq: H \cap \slq] \cdot [\glq: H \cdot \slq]
$$
$$
\geq p(\slq) \cdot  [\glq: H \cdot \slq], 
$$
\text{by Equation} \eqref{eq31}. 
\end{proof}
\begin{lemm}\label{lemm33}
For natural numbers $a_1, \dots, a_k \geq 2$, at least one of which is strictly larger than $2$,  we have 
$$
\sum_i a_i < \prod_i a_i. 
$$ 
\end{lemm}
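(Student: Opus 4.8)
Note first that the statement is meant for $k\geq 2$: when $k=1$ the hypothesis just says $a_1>2$, and then $\sum_i a_i=\prod_i a_i=a_1$, so no strict inequality holds. Assuming $k\geq 2$, the approach I would take is the substitution $x_i:=a_i-1$. The hypotheses become $x_i\geq 1$ for all $i$ and, after relabeling, $x_1\geq 2$; the desired conclusion $\prod_i a_i>\sum_i a_i$ becomes $\prod_i(1+x_i)>k+\sum_i x_i$ (since $\sum_i a_i=k+\sum_i x_i$). This isolates all the arithmetic into one clean estimate.

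The key step is the elementary super-multiplicativity bound $\prod_{i=2}^k(1+x_i)\geq 1+\sum_{i=2}^k x_i$, valid for any nonnegative reals and provable by a one-line induction on $k$ (multiply through by $1+x_{k+1}$ and discard the nonnegative cross term). Applying it and keeping the distinguished factor $1+x_1$ outside, I get
$$
\prod_{i=1}^k(1+x_i)=(1+x_1)\prod_{i=2}^k(1+x_i)\ \geq\ (1+x_1)\Bigl(1+\sum_{i=2}^k x_i\Bigr)=1+\sum_{i=1}^k x_i+x_1\sum_{i=2}^k x_i .
$$
Since $x_1\geq 2$ and $\sum_{i=2}^k x_i\geq k-1$ (a sum of $k-1$ terms each $\geq 1$), the last term is at least $2(k-1)\geq k$ because $k\geq 2$. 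Hence $\prod_i(1+x_i)\geq 1+k+\sum_i x_i>k+\sum_i x_i$, which is exactly $\prod_i a_i>\sum_i a_i$.

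An alternative I would mention is a direct induction on $k$: base case $k=2$ is $a_1a_2-a_1-a_2=(a_1-1)(a_2-1)-1\geq 1$; for the step, if the first $k$ entries already contain an element $>2$ one writes $\sum_{i=1}^{k+1}a_i<P+a_{k+1}\leq P\cdot a_{k+1}$ with $P=\prod_{i=1}^k a_i$, using $(P-1)(a_{k+1}-1)\geq 1$; the remaining possibility, that all but one of the $a_i$ equal $2$, is checked by hand via $2k+a<2^k a$. The only real obstacle in either route is the bookkeeping forced by the hypothesis ``at least one $a_i>2$'': in the inductive version one must ensure the large entry survives into the sub-instance, which creates the small-case split above, whereas the substitution argument avoids this entirely by never folding $a_1$ into the induction. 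Everything else is routine.
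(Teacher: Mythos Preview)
Your proposal is correct. Your note that $k\geq 2$ is required is well taken, and both the substitution argument and the alternative induction you sketch are valid.

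The paper's own proof is essentially your ``alternative'': it assumes without loss of generality that $a_1\geq 3$, observes $(a_1-1)(a_2-1)\geq 2$ so that $a_1a_2\geq a_1+a_2+1$, and then just writes ``The rest is clear,'' leaving the inductive step and the attendant bookkeeping about where the large entry sits entirely to the reader. Your substitution $x_i=a_i-1$ together with the one-line super-multiplicativity bound $\prod_{i\geq 2}(1+x_i)\geq 1+\sum_{i\geq 2}x_i$ is a genuinely different organization: by keeping the distinguished factor $1+x_1$ outside the induction you avoid the case split (``does the sub-instance still contain an entry $>2$?'') that an honest inductive write-up would require, and you end with a clean quantitative excess of $1$ rather than a bare strict inequality. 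The paper's version is shorter on the page only because it suppresses exactly the step you take care to handle; your argument is the more complete of the two.
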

\begin{proof}
Proof is by induction, without loss of generality assume $a_1 \geq 3$. We have 
$$
(a_1 - 1) \cdot (a_2 -1) \geq 2. 
$$
Simplifying gives $a_1 a_2 \geq a_1 + a_2 + 1$. The rest is clear. 
\end{proof}

\begin{coro}\label{coro34}
We have 
$$
T_{n,q}(q-1) < p(\slq). 
$$ when $n = 2$ or $3$ or $n > 3$ is a very divisible integer.
\end{coro}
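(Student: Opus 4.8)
The plan is to bound $T_{n,q}(q-1)$ from above by a quantity that is visibly smaller than $p(\slq)$, using the explicit lower estimates for $p(\slq)$ available from the earlier sections. Recall $T_{n,q}(q-1) = q_1^{b_1} + \cdots + q_t^{b_t}$, where $q-1 = p_1^{a_1}\cdots p_s^{a_s} q_1^{b_1}\cdots q_t^{b_t}$ and the $p_i$ are exactly the primes dividing $g=\gcd(n,q-1)$. In particular each $q_j^{b_j}$ divides $q-1$, and the $q_j^{b_j}$ are pairwise coprime with product dividing $q-1$. So by Lemma \ref{lemm33} (applied to the integers $q_j^{b_j}$, all $\geq 2$, and at least one $\geq 3$ since $q-1$ is even and the $q_j$'s are odd primes unless the lone prime $2$ sits among the $q_j$; the edge cases are handled separately), we get
\begin{equation*}
T_{n,q}(q-1) = \sum_j q_j^{b_j} \leq \prod_j q_j^{b_j} \leq q-1,
\end{equation*}
with room to spare unless $q-1$ is a prime power. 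So it suffices to show $q-1 < p(\slq)$ (and to treat the prime-power case by hand, where $T_{n,q}(q-1)$ is either a single prime power $\le q-1$ or, if $g=q-1$ has the same radical, possibly $T_{n,q}(q-1)$ is a proper divisor, hence even smaller).

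Next I would invoke the structural results of Section \ref{sec:sln}. For $n$ very divisible with $g>1$, Theorem \ref{slnmainres} (and Theorem \ref{sl3} for $n=3$) tells us $p(\slq)$ equals either $\frac{q^n-1}{g}$ or $\frac{q^n-1}{q-1}\cdot\sum_j p_j^{a_j}$; in either case $p(\slq) \geq \frac{q^n-1}{g} \geq \frac{q^n-1}{q-1}$. For $g=1$, by Patton \cite{PATTON} the minimal index of a subgroup of $\slq$ is $\frac{q^n-1}{q-1}$, so again $p(\slq) \geq \frac{q^n-1}{q-1}$. Thus in all cases
\begin{equation*}
p(\slq) \geq \frac{q^n-1}{q-1} = q^{n-1} + q^{n-2} + \cdots + q + 1 > q-1,
\end{equation*}
the last inequality being immediate for $n \geq 2$ and $q \geq 2$. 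Chaining this with the bound from the previous paragraph gives $T_{n,q}(q-1) \leq q-1 < p(\slq)$, which is the claim.

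The only real subtlety — and the place I would be most careful — is the application of Lemma \ref{lemm33}: it requires at least one $q_j^{b_j} > 2$, which fails precisely when $t=1$ and $q_1^{b_1}=2$, or when $t=0$ (i.e. $q-1$ is $g$-smooth, so $T_{n,q}(q-1)=0$ and there is nothing to prove). In the former case $T_{n,q}(q-1)=2 < \frac{q^n-1}{q-1} \leq p(\slq)$ directly since $q \geq 5$ forces $\frac{q^n-1}{q-1} \geq q+1 \geq 6$. So the lemma is never actually needed in a borderline way, and the chain of inequalities closes cleanly. I do not expect any genuine obstacle here; the argument is a packaging of Lemma \ref{lemm33} together with the minimal-index bounds already established, and the statement is robust precisely because $p(\slq)$ grows like $q^{n-1}$ while $T_{n,q}(q-1)$ is at most linear in $q$.
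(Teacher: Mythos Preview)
Your proof is correct and follows essentially the same approach as the paper: bound $T_{n,q}(q-1) \leq q-1$ via Lemma~\ref{lemm33}, then bound $p(\slq)$ from below and compare. The paper runs this same chain of inequalities case-by-case (for $n=2$, $n=3$, and general very divisible $n$) using the explicit values of $p(\slq)$ from \S\ref{subsec:sl2} and Theorems~\ref{sl3} and~\ref{slnmainres}, whereas you use the uniform lower bound $p(\slq) \geq (q^n-1)/(q-1)$ from Patton, which is a mild but genuine simplification.
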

\begin{proof} 
If $n = 2$, then by lemma \ref{lemm33} 
$$
T_{2,q}(q-1) \leq \frac{q-1}{(q-1)_{2,q}} < q-1 < q+1 < (q-1)_{2,q} (q+1) = p(\SL_2(\F_q))
$$ upon using the statements in \S\ref{subsec:sl2}. 

\

If $n=3$, we have two cases depending on whether or not $3$ divides $q-1$.

\

Suppose $3$ does not divide $q-1$, so by Theorem \ref{sl3}, $p(\SL_3(\F_q)= \frac{q^3 -1}{q-1}$.
By Lemma \ref{lemm33}, 
$$
T_{3,q}(q-1) \leq \frac{q-1}{(q-1)_{3,q}} < q-1 <
q+1 < \frac{q^3 -1}{q-1} = p(\SL_3(\F_q).
$$

Next, suppose $3$ divides $q-1$, so by Theorem \ref{sl3}, 
$p(\SL_3(\F_q)= {(q^2+q+1)(q-1)_{3,q}}{}$.
By Lemma \ref{lemm33}, 
$$
T_{3,q}(q-1) \leq \frac{q-1}{(q-1)_{3,q}} < q-1 
< p(\SL_3(\F_q).
$$
For general very divisible $n > 3$ by Lemma \ref{lemm33}  and Theorem \ref{slnmainres}, 
$$
T_{n,q}(q-1) \leq \frac{q-1}{(q-1)_{n,q}} \leq q-1
<  (q-1)\frac{(q^{n - 1} + q^{n -2} + \cdots + 1)}{q-1} = 
$$

$$ \leq
\frac{(q^n-1)}{q-1} 
\leq \frac{(q^n-1)}{q-1}(\Sigma_j p_j) \leq 
p(\SL_n(\F_q)).
$$
\end{proof}
Now let $\mathcal C = \{H_1, \dots, H_\ell\}$ be a minimal faithful collection of $\GL_n(\F_q)$ for $n$ prime.
Recall, that the collection being faithful  means that we have $\core_G(H_1 \cap \dots \cap H_{\ell}) = \{e \}$. In particular, no central subgroup of $\glq$ can lie in the intersection
of the  groups $H_i$, since the center  of $\glq$ is always a normal subgroup. Further recall that $Z(\slq) \subset Z(\glq)$, implies  that for each prime factor $p_j$ dividing $g$ there must exist a subgroup $H_{i_j}$ in our collection , such that $H_{i_j} \cap \slq$ has trivial intersection with the order $p_j$ subgroup  of $Z(\slq)$. If this was not true, then  $\core_G(H_1 \cap \dots \cap H_{\ell})$ would contain the order $p_j$ subgroup of $Z(\slq)$. Recall, that any central element of $\slq$ has order dividing $g$, since it has the form diag$(a,\cdots,a)$ with $a^n = a^{q-1} = 1$ i.e. $a^g = 1$.  Also, since we are assuming $n$ is prime  we must have that $g$ equals either $1$ or $n$, as $g$ divides $n$.
Thus, in the case of $n$ prime, we conclude that there must be some index $i$ for which the subgroup $H_i \cap \slq$ of $\slq$ does not contain the central element of  $\slq$ of order $n$, if it exists, thus making it a core free subgroup of $\slq$. 
\begin{lemm}\label{lemma35} In the case that $n = 2,3,$ or $n >3$ is  very divisible and prime, there is at most one $i$ such that $H_i \cap \slq$ is a core free subgroup of $\slq$ and $H_i \cdot \SL_{n}(\F_q) = \GL_{n}(\F_q)$. 
\end{lemm}
\begin{proof}
Suppose $H_i \cap \slq, H_j\cap \slq$ are both core free subgroups of $\slq$.
Hence,
by Lemma \ref{lemm32},
$
[\GL_{n}(\F_q): H_i ] + [\GL_{n}(\F_q):H_j] 
$ is larger than or equal to 
$$
([\GL_n(\F_q): H_i \cdot \SL_n(\F_q)] + [\GL_n(\F_q): H_j \cdot \SL_n(\F_q)]) \cdot p(\SL_n(\F_q) 
$$
which is at least $2 p(\slq)$. By Corollary \ref{coro34} we have $2 p(\SL_n(\F_q)) > p(\SL_n(\F_q)) + T_{n,q}(q-1)$, and this latter quantity, by Corollary \ref{coro25} and \ref{slnmainres}, is larger than or equal to $p(\glq)$. Consequently,
if $H_i \ne H_j$ or if $H_i \cdot \SL_n(\F_q) \ne \GL_n(\F_q)$,  
$$
[\GL_n(\F_q): H_i ] + [\GL_n(\F_q):H_j]  > p(\GL_n(\F_q)). 
$$
This contradicts the assumption that $\mathcal C$ is a minimal faithful collection. 
Without loss of generality let $i=1$. 
\end{proof}
Our goal is to minimize 
$$
[\GL_n(\F_q): H_2] + \dots + [\GL_n(\F_q): H_\ell]. 
$$
We need a lemma. 
\begin{lemm}\label{lemma39}  
Suppose $t \mid q-1$, and $t \ne q-1$. Let $H(t)$ be the subgroup of $\glq$ with minimal $[\glq: H]$ among the subgroups that satisfy $Z \cap H = Z_t$. Then
$$
H(t) = 
\glq^{dt}, 
$$ where $d$ is any divisor of $g = \gcd(n,q-1)$.
Furthermore, 
$$
[\glq: H(t)] = 
dt.
$$
\end{lemm}
\begin{proof} 
By Lemma \ref{core} there is a subgroup $H$ containing $\SL_n(\F_q)$ which satisfies the conditions of the lemma. 
In fact we will show the subgroup $H$ in the statement of the lemma must contain $\SL_n(\F_q)$.

Let $m_t$ be the minimum degree $[\glq:K]$, over subgroups $K$ of $\glq$ containing $\SL_n(\F_q)$ and $Z_t$. Now, the $H$ in the lemma satisfies $[\glq:H] = [\glq: H \cap \SL_n(\F_q)]/[H: H \cap \SL_n(\F_q)] =  [\glq: H \cap \SL_n(\F_q)].\frac{[\GL_n(\F_q): H.\SL_n(\F_q)]}{q-1}$ by Lemma \ref{lemm31}. Observe, $H.\SL_n(\F_q)$ is a subgroup of $\glq$ containing both $Z_t$(as $H$ contains it) and $\SL_n(\F_q)$. Hence,$[\glq: H] \geq [\glq: H \cap \SL_2(\F_q)].\frac{m_t}{q-1}$. We require $[\glq:H]$ to be minimal among all subgroups containing $Z_t$ by definition. So, we need to minimize $[\glq, H \cap \SL_n(\F_q)]$. For this, it's clear that $|H \cap \SL_n(\F_q)|$ must be maximized, hence $H$ contains $\SL_n(\F_q)$. So, by Lemma \ref{subgrpscontainsln} $H = \glq^{t'}$ for some $t' | q-1$. Combining Lemma \ref{core} and $H \cap Z = Z_t$, we obtain $t = \frac{t'}{\gcd(n,t')}$. However, $\gcd(n,t')|\gcd(n.q-1) = g \implies t' = dt$ for some divisor $d$ of $g$ and $H = \glq^{dt}$ as claimed. The last assertion follows from Equation\ref{indext}.
\end{proof}

\section{Minimal faithful permutation representations of $\GL_2(\F_q)$}\label{sec:mingl2}
We will determine the size and the structure of the minimal faithful permutation representations of the group $\GL_2(\F_q)$, for an odd prime power $q$.  Theorem 3.7 of \cite{gl2} claims to have determined at least the size of the minimal faithful permutation representation, but there is a typo in the answer, and it appears to us that the proof presented is not correct. The proof we present here is inspired by the results
and techniques of \cite{sl2, gl2}.  

\

We will modify the notation slightly noting that $\gcd$ of $2$ and $q-1$ is always $2$, since $q$ is odd. Given a natural number $n$, we can write $n = 2^r \prod_{p \text{ odd}} p^{e_p}$.  We set $n_{2,q} = 2^r$ and $T_{2,q}(n) = \sum_{p \text{ odd}} p^{e_p}$.
Also given a finite group  $G$ we let $p(G)$ be the size of the minimal faithful permutation representation of of $G$, i.e., the size of the smallest set $A$ on which $G$ has a faithful action. We will be proving the following 
result: 
\begin{theo}\label{theo11}
If $q \geq 3$ is an odd prime power, then 
$$
p(\GL_2(\F_q)) = p(\SL_2(\F_q)) + T_{2,q}( q-1) = (q+1)(q-1)_{2} + T_{2,q}(q-1). 
$$
\end{theo}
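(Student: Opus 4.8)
The upper bound $p(\GL_2(\F_q)) \le (q+1)(q-1)_2 + T_2(q-1)$ is already Corollary \ref{coro25} (with $n=2$, noting $(q-1)_{2,q-1} = (q-1)_2$ and $\frac{q^2-1}{q-1} = q+1$). So the task is the matching lower bound: if $\mathcal C = \{H_1, \dots, H_\ell\}$ is any faithful collection, then $\sum_i [\GL_2(\F_q) : H_i] \ge (q+1)(q-1)_2 + T_2(q-1)$. The strategy is to run the argument of Lemma \ref{lemma35} and Lemma \ref{lemma39} in the concrete case $n = 2$, where $g = \gcd(2, q-1) = 2$ and the unique central element of order $2$ is $-I$.

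\textbf{Step 1: isolate one subgroup missing $-I$.} Since $\core_{\GL_2}(H_1 \cap \dots \cap H_\ell) = \{e\}$ and $-I$ is central of order $2$, some $H_i$ does not contain $-I$; by Lemma \ref{lemma35} (applied with the prime $n=2$, which is vacuously very divisible) there is exactly one such $i$, say $i = 1$, and moreover $H_1 \cdot \SL_2(\F_q) = \GL_2(\F_q)$. Then by Lemma \ref{lemm32}, $[\GL_2(\F_q) : H_1] \ge p(\SL_2(\F_q)) \cdot [\GL_2 : H_1 \SL_2] = p(\SL_2(\F_q)) = (q+1)(q-1)_2$, using \S\ref{subsec:sl2}. (One should also note the bound is achieved: $H_1$ can be taken so that $H_1 \cap \SL_2$ realizes $p(\SL_2)$, which is why we seek exactly this size.)

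\textbf{Step 2: the remaining subgroups must kill the odd part of the center.} Since $H_1$ is the only member of $\mathcal C$ omitting $-I$, the element $-I$ lies in $H_2 \cap \dots \cap H_\ell$, so $Z \cap (H_2 \cap \dots \cap H_\ell)$ is a subgroup of $Z$ containing the order-$2$ element; write it $Z_s$ with $s \mid q-1$ and $s$ odd (indeed $\frac{q-1}{s}$ must be even). Faithfulness of $\mathcal C$ forces $Z_s \cap (Z \cap H_1) = \{e\}$; since $Z \cap H_1 = Z_{2u}$ for the appropriate odd part and $\lcm$ of the two must be $q-1$, combined with $s$ odd this forces $s = (q-1)_2$, i.e. $Z \cap (H_2 \cap \dots \cap H_\ell) = Z_{(q-1)_2}$. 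Writing the odd part $\frac{q-1}{(q-1)_2} = \prod_p p^{e_p}$, for \emph{each} odd prime power $p^{e_p}$ there must be some $H_j$ with $j \ge 2$ such that $p \nmid |Z \cap H_j|$, i.e. $p^{e_p} \mid$ the "level" of $H_j$; otherwise that prime survives in the core. For each such $j$, Lemma \ref{lemma39} (with $t$ the appropriate divisor) gives $[\GL_2(\F_q) : H_j] \ge p^{e_p}$ — more precisely, among subgroups whose intersection with $Z$ has index divisible by $p^{e_p}$, the minimal index is $p^{e_p}$ (achieved by $\GL_2^{p^{e_p}}$). Since the distinct odd prime powers $p^{e_p}$ require distinct indices $j$, we get $\sum_{j \ge 2} [\GL_2(\F_q):H_j] \ge \sum_p p^{e_p} = T_2(q-1)$.

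\textbf{Step 3: assemble and handle sharpness.} Adding Steps 1 and 2 gives $\sum_i [\GL_2(\F_q):H_i] \ge (q+1)(q-1)_2 + T_2(q-1)$, matching Corollary \ref{coro25}. Finally one checks the construction of Proposition \ref{fcoll} (the collection $\{Q \cdot A, \GL_2^{q_1^{b_1}}, \dots\}$, which for $n=2$ reduces to an $H_{odd}$-type subgroup together with the $\GL_2^{q_j^{b_j}}$) attains this value, so equality holds; a remark on which collections are minimal (the $H_1$ part is essentially unique up to the known flexibility in minimal faithful reps of $\SL_2$, which is where \cite{sl2} is invoked) completes the structural claim. \textbf{The main obstacle} is Step 2: one must argue carefully that the odd-prime-power "obligations" are genuinely independent — that no single $H_j$ with small index can simultaneously handle two distinct odd primes more cheaply than handling them separately. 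This is exactly where Lemma \ref{lemma39}'s identification of $H(t) = \GL_2^{dt}$ with index $dt$, together with the subadditivity-failure Lemma \ref{lemm33}, does the work: splitting is always at least as cheap as combining, so the minimum is the sum $T_2(q-1)$, and conversely combining is never strictly cheaper, pinning the value exactly.
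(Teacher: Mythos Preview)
Your outline gets the upper bound and Step~1 right, but Step~2 has a genuine gap. You assert that ``$Z \cap H_1 = Z_{2u}$ for the appropriate odd part'' and conclude that the $H_j$ for $j \ge 2$ must kill every odd prime of $q-1$, but neither follows from what you have established. All you know about $H_1$ at that point is $-I \notin H_1$ and $H_1 \cdot \SL_2 = \GL_2$; this forces $(q-1)_2 \mid t_1$ (writing $Z \cap H_1 = Z_{t_1}$) but says nothing about the odd part of $t_1$. So a priori $H_1$ could itself kill some odd prime $p$, and then the $H_j$ with $j \ge 2$ would not be obliged to contribute $p^{e_p}$. Your lower bound $[\GL_2:H_1] \ge p(\SL_2)$ from Lemma~\ref{lemm32} does not tighten when $H_1$ kills extra primes, so the two pieces of your estimate no longer add up to $p(\SL_2) + T_2(q-1)$. (Incidentally, the inference ``$\frac{q-1}{s}$ even, hence $s$ odd'' is simply false: take $q-1 = 8$, $s = 2$.)

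The paper fills exactly this gap with substantial additional work that your outline omits. Lemma~\ref{lem5.5} uses the classification of odd-order subgroups of $\SL_2(\F_q)$ (from \cite{sl2}) to show that, in a minimal collection, $H_1 \cap \SL_2$ must be conjugate to $H_{odd}$; Lemma~\ref{lemm5.6} then pins $H_1$ down as one of the explicit groups $GH(n)$; and Lemma~\ref{lemm5.8} computes $\core_{\GL_2}(GH(n)) = Z_{2^r}$ by a direct (and somewhat delicate) calculation. Only after this is it legitimate to assert $Z \cap H_1 = Z_{2^r}$, i.e.\ that $H_1$ carries none of the odd-prime obligations, and then the optimization via Lemma~\ref{lemma39} and Lemma~\ref{lemm33} proceeds as you describe. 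The ``main obstacle'' you flag --- that one $H_j$ with $j \ge 2$ cannot handle two odd primes more cheaply than two separate ones --- is actually the easy part; the hard part is showing that $H_1$ handles none of them.
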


In fact we prove a much stronger theorem, Theorem \ref{theo313}, where we identify all minimal faithful sets for $\GL_2(\F_q)$. The equality $p(\SL_2(\F_q)) = (q+1)(q-1)_{2,q}$ is Theorem 3.6 of \cite{sl2}.   To prove our theorem we first construct a faithful permutation representation of size $p(\SL_2(\F_q)) + T_{2,q}( q-1)$ 
and then we proceed to find {\em all} minimal faithful permutation representations of $\GL_2(\F_q)$ by trying to beat this bound. The proof we present here is elementary but rather subtle. 

\

Recall the construction of the minimal faithful permutation representation of $\SL_2(\F_q)$ from \S\ref{subsec:sl2}. 
Set 
\begin{equation}\label{GHODD-D1}
GH_{odd} = D_1 \cdot H_{odd}. 
\end{equation}
Then $GH_{odd}$ is a subgroup of $\GL_2(\F_q)$, and we have 
\begin{equation}\label{GHodd}
[\GL_2(\F_q): GH_{odd}] = (q-1)_{2,q} (q+1). 
\end{equation}
\begin{lemm}\label{coreGHodd}
We have 
$$
\core_{\GL_2(\F_q)}(GH_{odd}) = Z_{2^r}. 
$$
\end{lemm}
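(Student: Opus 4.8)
The plan is to compute $\core_{\GL_2(\F_q)}(GH_{odd})$ directly by first identifying the core as a subgroup of the center and then determining exactly which central elements lie in $GH_{odd}$. Recall $GH_{odd} = D_1 \cdot H_{odd}$ where $H_{odd}$ consists of matrices $\begin{pmatrix} a & \\ & a^{-1}\end{pmatrix}\begin{pmatrix} 1 & x \\ & 1\end{pmatrix}$ with $a \in A_{2^r}$, $x \in \F_q$. First I would observe that $\SL_2(\F_q)$ is not contained in $GH_{odd}$ — indeed $[\GL_2(\F_q):GH_{odd}] = (q-1)_2(q+1)$ while $[\GL_2(\F_q):\SL_2(\F_q)] = q-1$, and since $(q-1)_2(q+1) > q-1$ for odd $q$, the index is too large — so $GH_{odd}$ cannot contain any nontrivial normal subgroup that projects onto a nontrivial subgroup of $\GL_2/\SL_2$ in the "wrong" way. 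More carefully, since $GH_{odd} = D_1 \cdot H_{odd}$ is contained in the Borel of upper-triangular matrices, its core is contained in the core of the Borel, which is $Z$; hence $\core_{\GL_2(\F_q)}(GH_{odd}) = GH_{odd} \cap Z$, using that every subgroup of $Z$ is normal in $\GL_2(\F_q)$.

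The main computational step is then to determine $GH_{odd} \cap Z$. A scalar matrix $\begin{pmatrix} z & \\ & z\end{pmatrix}$ lies in $GH_{odd}$ iff it can be written as $\begin{pmatrix} b & \\ & 1\end{pmatrix}\begin{pmatrix} a & \\ & a^{-1}\end{pmatrix}\begin{pmatrix} 1 & x \\ & 1\end{pmatrix}$ with $b \in \F_q^\times$, $a \in A_{2^r}$, $x \in \F_q$. Multiplying out gives $\begin{pmatrix} ba & bax \\ & a^{-1}\end{pmatrix}$, so diagonality forces $x = 0$, and then $z = ba$ and $z = a^{-1}$. The second equation determines $z = a^{-1}$ with $a \in A_{2^r}$, and since $A_{2^r}$ is a subgroup of $\F_q^\times$ it is closed under inversion, so $z$ ranges over $A_{2^r}$; the first equation $b = z/a = z \cdot z = z^2$ is then automatically solvable for $b \in \F_q^\times$. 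Hence $GH_{odd} \cap Z = \{\,\mathrm{diag}(z,z) : z \in A_{2^r}\,\}$, which is precisely $Z_{2^r}$ by the definition of $Z_t$ (as $A_{2^r}$ has order $(q-1)/2^r$, matching $|Z_{2^r}| = (q-1)/2^r$).

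I would then combine these two observations: $\core_{\GL_2(\F_q)}(GH_{odd}) = GH_{odd}\cap Z = Z_{2^r}$. The only step requiring genuine care is the reduction $\core = GH_{odd}\cap Z$; once one knows the core sits inside $Z$ (via containment in the Borel, whose core over $\GL_2$ is the scalars), the rest is the explicit matrix multiplication above, which I expect to be routine. The mild subtlety worth double-checking is that $H_{odd}$ as written is genuinely a subgroup of $\SL_2(\F_q)$ (entries have determinant $a\cdot a^{-1} = 1$) so that $D_1 \cdot H_{odd}$ is a well-defined subgroup of $\GL_2(\F_q)$ of the claimed index, but this is already recorded in \eqref{GHodd} and in \S\ref{subsec:sl2}.
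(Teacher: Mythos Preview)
Your proposal is correct and follows essentially the same approach as the paper: reduce to showing $\core_{\GL_2(\F_q)}(GH_{odd}) = GH_{odd}\cap Z$, then compute this intersection by explicit matrix multiplication. The paper's reduction invokes the classification of normal subgroups of $\GL_2(\F_q)$ (those not containing $\SL_2(\F_q)$ lie in $Z$), while yours routes through the Borel; these are equivalent observations. Your computation of $GH_{odd}\cap Z$ is in fact more complete than the paper's, which only exhibits the inclusion $Z_{2^r}\subseteq GH_{odd}\cap Z$ via the identity $\begin{pmatrix} a^2 & \\ & 1\end{pmatrix}\begin{pmatrix} a^{-1} & \\ & a\end{pmatrix}=\begin{pmatrix} a & \\ & a\end{pmatrix}$ for $a\in A_{2^r}$ and leaves the reverse inclusion implicit.
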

\begin{proof}
Any normal subgroup of $\GL_2(\F_q)$ which does not contain $\SL_2(\F_q)$ is a subgroup of $Z$, so we just need to show $Z \cap GH_{odd} = Z_{2^r}$. For $a \in A_{2^r}$ we have 
$$
\begin{pmatrix} a^2 \\ & 1 \end{pmatrix} \cdot \begin{pmatrix} a^{-1} \\ & a \end{pmatrix} = \begin{pmatrix} a \\ & a \end{pmatrix}. 
$$
\end{proof}

We can now give a construction of a faithful permutation representation which we will later prove to be minimal. 

\begin{prop}
Write $q-1 = 2^r \cdot p_1^{e_1} \cdots p_k^{e_k}$ with $p_1, \dots, p_k$ distinct odd primes, and $e_1, \dots, e_k \geq 1$. Then the collection of subgroups 
$$
\{ GH_{odd}, \GL_2(\F_q)^{p_1^{e_1}}, \dots, \GL_2(\F_q)^{p_k^{e_k}} \}
$$
is a faithful collection. The corresponding faithful permutation representation has size $p(\SL_2(\F_q)) + T_{2,q}(q-1)$. 
\end{prop}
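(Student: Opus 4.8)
The statement is essentially an instance of Proposition \ref{fcoll} specialized to $n=2$, so the plan is to verify the two conditions directly using Lemmas \ref{core}, \ref{subgrpscontainsln} and \ref{coreGHodd}. First I would check faithfulness: since $\SL_2(\F_q)$ is not contained in $GH_{odd}$ (e.g. by Lemma \ref{coreGHodd}, its core is $Z_{2^r}$, which is a proper subgroup of $Z$), and since each $\GL_2(\F_q)^{p_i^{e_i}}$ contains $\SL_2(\F_q)$, the core of the full intersection
$$
U = GH_{odd} \cap \GL_2(\F_q)^{p_1^{e_1}} \cap \dots \cap \GL_2(\F_q)^{p_k^{e_k}}
$$
is again contained in $Z$ (any normal subgroup of $\GL_2(\F_q)$ either contains $\SL_2(\F_q)$ — impossible here since it would have to lie in $GH_{odd}$ — or lies in $Z$). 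Hence $\core_{\GL_2(\F_q)}(U) = U \cap Z$, and any subgroup of $Z$ is automatically normal, so it suffices to compute $U \cap Z$ as an intersection of subgroups of $Z$.

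Second, I would compute that intersection. By Lemma \ref{coreGHodd} we have $GH_{odd} \cap Z = Z_{2^r}$, and by Lemma \ref{core} (applied with $n=2$, noting $\gcd(2, p_i^{e_i}) = 1$ since each $p_i$ is odd) we have $\GL_2(\F_q)^{p_i^{e_i}} \cap Z = Z_{p_i^{e_i}}$. Then by the repeated use of Equation \eqref{capst},
$$
U \cap Z = Z_{2^r} \cap Z_{p_1^{e_1}} \cap \dots \cap Z_{p_k^{e_k}} = Z_{\lcm(2^r, p_1^{e_1}, \dots, p_k^{e_k})} = Z_{q-1} = \{e\},
$$
using that $2^r, p_1^{e_1}, \dots, p_k^{e_k}$ are pairwise coprime with product $q-1$. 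This establishes that the collection is faithful.

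Third, I would compute the size of the corresponding permutation representation as the sum of the indices. By Equation \eqref{GHodd}, $[\GL_2(\F_q): GH_{odd}] = (q-1)_2(q+1) = p(\SL_2(\F_q))$ (the last equality being Theorem 3.6 of \cite{sl2}, quoted in \S\ref{subsec:sl2}), and by Equation \eqref{indext}, $[\GL_2(\F_q): \GL_2(\F_q)^{p_i^{e_i}}] = p_i^{e_i}$. Summing gives
$$
p(\SL_2(\F_q)) + \sum_{i=1}^{k} p_i^{e_i} = p(\SL_2(\F_q)) + T_2(q-1),
$$
as claimed. I do not expect any serious obstacle here: the argument is a routine specialization of Proposition \ref{fcoll}, and the only points requiring a line of care are confirming that the hypothesis $\gcd(2, p_i^{e_i}) = 1$ lets Lemma \ref{core} give $Z_{p_i^{e_i}}$ (rather than a larger $Z$), and confirming that the core of the intersection really does reduce to its intersection with $Z$ — both of which follow immediately from the stated lemmas.
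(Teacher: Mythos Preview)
Your proposal is correct and follows essentially the same approach as the paper's own proof: reduce the core computation to an intersection with $Z$ via Lemma \ref{coreGHodd}, identify each piece using Lemmas \ref{core} and \ref{coreGHodd}, collapse the intersection with Equation \eqref{capst}, and sum indices via Equations \eqref{GHodd} and \eqref{indext}. Your added remarks (that $\gcd(2,p_i^{e_i})=1$ is what makes Lemma \ref{core} yield $Z_{p_i^{e_i}}$, and the explicit reason the core lands in $Z$) are helpful clarifications but do not change the route.
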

\begin{proof}
We need to show that the subgroup $ U = GH_{odd} \cap \GL_2(\F_q)^{p_1^{e_1}} \cap \dots \cap \GL_2(\F_q)^{p_k^{e_k}}$ is core free.   By Lemma \ref{coreGHodd}, $\core_{\GL_2(\F_q)}(GH_{odd})$ is a subset of $Z$, 
so the core of $U$ is a subgroup of $Z$. Since any subgroup of $Z$ is normal, we just need to compute the intersection $U \cap Z$. Lemmas \ref{core} and \ref{coreGHodd} imply that 
this intersection is $Z_{2^r} \cap Z_{p_1^{e_1}} \cap \dots Z_{p_k^{e_k}}$. As $\lcm(2^r, p_1^{e_1}, \dots, p_k^{e_k}) = q-1$, Equation \eqref{capst}  says 
$$
Z_{2^r} \cap Z_{p_1^{e_1}} \cap \dots Z_{p_k^{e_k}} = Z_{q-1} = \{e\}. 
$$
This means that the collection is faithful. The size of the corresponding permutation representation is equal to 
$$
[\GL_2(\F_q): GH_{odd}] + [\GL_2(\F_q): \GL_2(\F_q)^{p_1^{e_1}} ] + \dots + [\GL_2(\F_q): \GL_2(\F_q)^{p_k^{e_k}} ]
$$
$$
= (q-1)_{2,q}(q+1) + p_1^{e_1} + \dots + p_k^{e_k}
$$
after using Equations \eqref{GHodd} and \eqref{indext}. This finishes the proof of the proposition. 
\end{proof}

\begin{coro}\label{coro5.4}
We have 
$$
p(\GL_2(\F_q)) \leq (q-1)_{2,q}(q+1) + T_{2,q}(q-1) = p(\SL_n(\F_q)) + T_{2,q}(q-1). 
$$
\end{coro}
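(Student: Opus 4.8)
The plan is to read the bound off directly from the faithful collection constructed in the Proposition immediately preceding. Recall from the introduction that $p(G)$ is by definition the minimum of $\sum_i [G:H_i]$ over all faithful collections $\{H_1,\dots,H_\ell\}$ of subgroups of $G$; in particular, exhibiting any one faithful collection of $\GL_2(\F_q)$ gives an upper bound for $p(\GL_2(\F_q))$ equal to the size of the associated permutation set.

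The Proposition does exactly this: it shows that the collection $\{GH_{odd},\ \GL_2(\F_q)^{p_1^{e_1}},\dots,\GL_2(\F_q)^{p_k^{e_k}}\}$, where $q-1 = 2^r p_1^{e_1}\cdots p_k^{e_k}$, is faithful, and that the disjoint union of the corresponding coset spaces has cardinality $(q-1)_2(q+1) + T_2(q-1)$. Hence $p(\GL_2(\F_q)) \le (q-1)_2(q+1) + T_2(q-1)$, which is the first inequality.

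It then remains only to rewrite the right-hand side in the form claimed. By Theorem 3.6 of \cite{sl2}, recalled in \S\ref{subsec:sl2}, the subgroup $H_{odd}$ of \eqref{HODD} is a corefree subgroup of $\SL_2(\F_q)$ of minimal index, so $p(\SL_2(\F_q)) = [\SL_2(\F_q):H_{odd}] = (q-1)_2(q+1)$; substituting this identity yields the displayed equality (where the $\SL_n$ appearing in the statement should of course read $\SL_2$). There is no genuine obstacle here: all the content lies in the Proposition, and the corollary is simply the remark that a faithful collection bounds $p$ from above, combined with the known value of $p(\SL_2(\F_q))$. The only point worth a moment's care is matching the bookkeeping between $T_2(q-1)$ and $q_1^{b_1}+\dots+q_t^{b_t}$, which is immediate from the definitions in \S\ref{sec:mingl2}.
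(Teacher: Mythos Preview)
Your proposal is correct and matches the paper's approach exactly: the corollary is stated without proof in the paper because it is an immediate consequence of the preceding Proposition together with the identity $p(\SL_2(\F_q))=(q-1)_2(q+1)$ from \S\ref{subsec:sl2}. Your write-up simply makes explicit the two-line reasoning the paper leaves implicit.
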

Without loss of generality let $i=1$. 

\begin{lemm}\label{lem5.5}
The 
$
H_1 \cap \SL_2(\F_q)  
$
is a conjugate of $H_{odd}$ in $\SL_2(\F_q)$, with $H_{odd}$ defined by Equation \eqref{HODD}. 
\end{lemm}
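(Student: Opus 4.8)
The plan is to exploit the two structural facts established just before the statement: by Lemma~\ref{lemma35} (applied with $n=2$, which is trivially very divisible), the minimal faithful collection $\mathcal C = \{H_1, \dots, H_\ell\}$ has a unique member $H_1$ failing to contain the central involution, and moreover $H_1 \cdot \SL_2(\F_q) = \GL_2(\F_q)$. Since $H_1$ is core-free in $\GL_2(\F_q)$, its intersection $K := H_1 \cap \SL_2(\F_q)$ is a core-free subgroup of $\SL_2(\F_q)$, and by Theorem~3.6 of \cite{sl2} we know $H_{odd}$ realizes the minimal faithful representation of $\SL_2(\F_q)$, so $[\SL_2(\F_q): K] \geq (q+1)(q-1)_2$.

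First I would pin down the index of $K$ in $\SL_2(\F_q)$ exactly. By Lemma~\ref{lemm31}, $[H_1 : K] = \frac{|H_1 \cdot \SL_2(\F_q)|}{|\GL_2(\F_q)|}\cdot(q-1) = q-1$, using $H_1 \cdot \SL_2(\F_q) = \GL_2(\F_q)$. Combined with $[\GL_2(\F_q): \SL_2(\F_q)] = q-1$, this forces $[\SL_2(\F_q): K] = [\GL_2(\F_q): H_1]$. The core-freeness of $H_1$ then gives, via the inequality in Lemma~\ref{lemm32} (with $[\GL_2(\F_q): H_1 \cdot \SL_2(\F_q)] = 1$), that $[\GL_2(\F_q): H_1] \geq p(\SL_2(\F_q)) = (q+1)(q-1)_2$. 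The main point is to show equality must hold: if $[\GL_2(\F_q): H_1] > (q+1)(q-1)_2$, then already $[\GL_2(\F_q): H_1]$ together with the remaining terms would force the total degree of $\mathcal C$ to exceed the bound $(q-1)_2(q+1) + T_2(q-1)$ from Corollary~\ref{coro5.4}. To make this airtight I would argue that the other members $H_2, \dots, H_\ell$ of the collection must collectively contribute at least $T_2(q-1)$ to the degree: each must have non-trivial intersection with $Z$ failing to be all of $Z$ in the right way so that the intersection $\bigcap_i (H_i \cap Z)$ is trivial, and by Lemma~\ref{lemma39} the cheapest subgroups accomplishing the needed $Z$-intersection conditions have indices summing to at least $T_2(q-1)$; hence $[\GL_2(\F_q):H_1]$ cannot exceed $(q+1)(q-1)_2$ without violating minimality.

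Once $[\SL_2(\F_q): K] = (q+1)(q-1)_2 = p(\SL_2(\F_q))$ and $K$ is core-free, I would invoke the uniqueness part of the $\SL_2$ classification: Theorem~3.6 of \cite{sl2} (or the surrounding analysis in \cite{sl2}) identifies the core-free subgroups of minimal index in $\SL_2(\F_q)$ as precisely the conjugates of $H_{odd}$. Therefore $K = H_1 \cap \SL_2(\F_q)$ is a conjugate of $H_{odd}$ in $\SL_2(\F_q)$, which is exactly the claim.

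The hard part will be the second paragraph: justifying that the contributions of $H_2, \dots, H_\ell$ are bounded below by $T_2(q-1)$ so that $H_1$ cannot afford a larger index than $(q+1)(q-1)_2$. This requires a careful bookkeeping of which central subgroups each $H_i$ must avoid — precisely, for each odd prime power $p_j^{e_j} \| q-1$ there must be some $H_i$ whose intersection with $Z$ omits the order-$p_j$ central element, for otherwise $\core_{\GL_2(\F_q)}(\bigcap H_i)$ would contain that element — and then applying Lemma~\ref{lemma39} to each such constraint. A subtlety is that one $H_i$ could simultaneously handle several primes, but Lemma~\ref{lemma39} shows the index is at least the product of the relevant prime powers, and Lemma~\ref{lemm33} ensures a product dominates the corresponding sum, so grouping never helps; this is the place where the elementary-but-subtle character of the argument is concentrated.
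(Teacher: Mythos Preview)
Your route differs from the paper's. The paper invokes the classification (Lemma~3.5 of \cite{sl2}) of odd-order subgroups of $\SL_2(\F_q)$---either cyclic of order dividing $q\pm 1$, or contained in the Borel---and shows by direct index computation that whenever $K = H_1 \cap \SL_2(\F_q)$ is not conjugate to $H_{odd}$, already $[\GL_2(\F_q):H_1] = [\SL_2(\F_q):K]$ by itself exceeds the bound $(q+1)(q-1)_2 + T_2(q-1)$ of Corollary~\ref{coro5.4}. No information about $H_2,\dots,H_\ell$ is used.

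Your strategy---pin down $[\GL_2(\F_q):H_1] = p(\SL_2(\F_q))$ first by lower-bounding the remaining terms, then invoke uniqueness---has a genuine gap. The claim that $H_2,\dots,H_\ell$ contribute at least $T_2(q-1)$ tacitly assumes that every odd prime $p_j \mid q-1$ must be handled by some $H_i$ with $i \geq 2$; but nothing at this stage prevents $H_1$ itself from already omitting the order-$p_j$ central element. Concretely, $H_1 \cap Z = Z_s$ with $2^r \mid s$, and if $s > 2^r$ the $\lcm$ constraint on $t_2,\dots,t_\ell$ weakens, so your lower bound on $\sum_{i\geq 2}[\GL_2(\F_q):H_i]$ collapses. (That in fact $\core_{\GL_2(\F_q)}(H_1) = Z_{2^r}$ is Lemma~\ref{lemm5.8}, proved only \emph{after} Lemmas~\ref{lem5.5} and \ref{lemm5.6}.) Closing this gap amounts to ruling out core-free $K \subset \SL_2(\F_q)$ with $p(\SL_2(\F_q)) < [\SL_2(\F_q):K] \leq p(\SL_2(\F_q)) + T_2(q-1)$, which is precisely what the paper's case analysis accomplishes. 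A secondary issue: Theorem~3.6 of \cite{sl2} as quoted only asserts that $H_{odd}$ is \emph{a} minimal-index core-free subgroup, not the unique one up to conjugacy; the uniqueness you appeal to at the end likewise rests on the Lemma~3.5 classification.
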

\begin{proof}
Since $\begin{pmatrix} -1 \\ & -1 \end{pmatrix} \not\in H_1$, $H_1 \cap \SL_2(\F_q)$ will have odd order.  By Lemma 3.5 of \cite{sl2}, up to conjugation, we have the following 
possibilities for $H_1 \cap \SL_2(\F_q)$: 
\begin{enumerate}
\item[(A)]\label{case1} a cyclic subgroup of odd order dividing $q \pm 1$; 
\item[(B)]\label{case2} a subgroup of odd order of the upper triangular matrices
$$
T(2, q) = \{ \begin{pmatrix} a \\ & a^{-1} \end{pmatrix} \begin{pmatrix} 1 & x \\ & 1 \end{pmatrix} \mid a \in \F_q^\times, x \in \F_q. \}. 
$$
\end{enumerate}
In case (A), since $|H_1 \cap \SL_2(\F_q)| = (q\pm 1)/t$ is odd, and $q \pm 1$ are even numbers,  we must have $t \geq 2$. By the proof of Lemma \ref{lemm32} and the statement of Lemma \ref{lemma35} 
$$
[\GL_2(\F_q): H_1] = [\SL_2(\F_q): H_1 \cap \SL_2(\F_q)] \cdot [\GL_2(\F_q): H_1 \cdot \SL_2(\F_q)] 
$$
$$
= [\SL_2(\F_q): H_1 \cap \SL_2(\F_q)]  = \frac{|\SL_2(\F_q)|}{|H_1 \cap \SL_2(\F_q)|}
$$
$$
= \frac{q(q+1)(q-1)}{q\pm 1} \cdot t \geq 2 \frac{q(q+1)(q-1)}{q\pm 1}.  
$$
We determine the cases where $\frac{q(q+1)(q-1)}{q\pm 1} \geq (q-1)_{2,q} \cdot (q+1)$.   We need  $\frac{q(q-1)}{q\pm 1} \geq (q-1)_{2,q}$. We have two cases: 

\begin{itemize}
\item If the denominator is $q-1$, then we need $q \geq (q-1)_{2,q}$, and that's obviously true. 
\item If the denominator is $q+1$, then we want $q(q-1) \geq (q+1) (q-1)_{2,q}$.  For this write $q-1 = 2^r m$ with $m$ odd, then we need $(2^r m + 1) 2^r m \geq (2^r m + 2) 2^r$, or 
what is the same, $(2^r m +1)m \geq 2^r m + 2$.   If $m \geq 3$, then this last inequality is definitely satisfied, but if $m =1$, it is not true. 
\end{itemize}
This discussion means that unless $q=2^r + 1$, $[\GL_2(\F_q): H_1]  \geq 2 p(\SL_2(\F_q))$ which by Corollary \ref{coro34} is strictly bigger than $p(\SL_2(\F_q)) + T_{2,q}(q-1)$. This last statement, by Corollary \ref{coro5.4}, contradicts the minimality of $\mathcal C$ 

\

Now  we examine the case where $q = 2^r +1$. Note that in this  case $T_{2,q}(q-1) =0$ as $q-1$ has on odd prime factors. One easily checks that 
$$
2 \frac{q(q+1)(q-1)}{q\pm 1} > (q-1)_{2,q}(q+1). 
$$
This means that $[\GL_2(\F_q): H_1] > p(\SL_2(\F_q)) + T_{2,q}(q-1)$ which again, by Corollary \ref{coro5.4}, contradicts the minimality of $\mathcal C$. 

\

Now we examine case (B). In this case, if we write $q-1 = 2^r m$, there is a divisor $m_0$ of $m$ such that 
$$
H_1 \cap \SL_2(\F_q) = \{ \begin{pmatrix} a \\ & a^{-1} \end{pmatrix} \begin{pmatrix} 1 & x \\ & 1 \end{pmatrix} \mid a \in A_{2^r m_0}, x \in \F_q \}.  
$$
(Note that this is up to conjugation only, but with a change of basis, we may assume it to be true.) 
Then
$$
[\SL_2(\F_q): H_1 \cap \SL_2(\F_q)] = [\SL_2(\F_q): H_{odd}] \cdot [H_{odd}: H_1 \cap \SL_2(\F_q)]
$$
$$
= m_0 \cdot p(\SL_2(\F_q)).
$$
By the proof of Lemma \ref{lemm32} we have
$$
[\GL_2(\F_q): H_1] = [\SL_2(\F_q): H_1 \cap \SL_2(\F_q)] \cdot [\GL_2(\F_q): H_1 \cdot \SL_2(\F_q)]
$$
$$
=m_0 \cdot p(\SL_2(\F_q)), 
$$
upon using  Lemma \ref{lemma35}. Again  as before if $m_0 >1$, we conclude $[\GL_2(\F_q):H_1] \geq 2$, and we get a contradiction. 
\end{proof}

Without loss of generality we may assume $H_1 \cap \SL_2(\F_q) = H_{odd}$.  

\

For $n$, $0 \leq n < q-1$, define a subgroup  $D(n) \subset \GL_2(\F_q)$ by 
$$
D(n) = \{ \begin{pmatrix} a^{n+1} \\ & a^{-n} \end{pmatrix} \mid a \in \F_q^\times \}. 
$$
Set 
$$
GH(n) = D(n) \cdot H_{odd}. 
$$
The subgroup $GH(0)$ is what we had called $GH_{odd}$ in Equation \eqref{GHODD-D1}.
\begin{lemm}\label{lemm5.6}
There is an $n$, $0 \leq n < q-1$, such that $H_1 = GH(n)$. 
\end{lemm}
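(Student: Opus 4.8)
We already know from Lemma~\ref{lem5.5} (and the normalization after it) that $H_1 \cap \SL_2(\F_q) = H_{odd}$. The strategy is to recover $H_1$ from this intersection together with the constraint $H_1 \cdot \SL_2(\F_q) = \GL_2(\F_q)$ coming from Lemma~\ref{lemma35}. First I would observe that, since $H_1 \cdot \SL_2(\F_q) = \GL_2(\F_q)$, the determinant map sends $H_1$ \emph{onto} $\F_q^\times$; hence for every $c \in \F_q^\times$ there is an element of $H_1$ of determinant $c$, and $|H_1| = (q-1)\,|H_{odd}| = (q-1)\,2^r\,q$. So $H_1$ is an extension of $\F_q^\times$ by $H_{odd}$ sitting inside the Borel-type subgroup that normalizes the unipotent radical $U = \{\begin{psmallmatrix}1 & x\\ 0 & 1\end{psmallmatrix}\}$.

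\textbf{Key steps.} (1) Show $H_1$ normalizes $U$: since $H_{odd} \supset U$ and $H_{odd} \triangleleft H_1$? — more carefully, $H_1 \cap \SL_2 = H_{odd}$ contains the unique Sylow $p$-subgroup $U$ of $H_{odd}$, and $U$ is characteristic in $H_{odd}$ which is normal in $H_1 \cap \SL_2$; but one needs $U$ normal in $H_1$ itself. The cleanest route: $U$ is the set of unipotent elements of $H_1$ (any element of $H_1$ of $p$-power order has determinant a $p$-power root of unity, i.e.\ determinant $1$, so lies in $H_1 \cap \SL_2 = H_{odd}$, whose only unipotents form $U$), hence $U$ is characteristic in $H_1$, so $H_1$ lies in $N_{\GL_2}(U)$, which is exactly the group $B$ of upper-triangular matrices. (2) Now $H_1 \subset B$, and $B/U \cong \F_q^\times \times \F_q^\times$ via $\begin{psmallmatrix}a & *\\0 & d\end{psmallmatrix} \mapsto (a,d)$. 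The image of $H_1$ in $B/U$ is a subgroup $\Lambda$ of $\F_q^\times \times \F_q^\times$; the image of $H_{odd} = H_1 \cap \SL_2$ is the antidiagonal $\{(a, a^{-1}) : a \in A_{2^r}\}$, and the determinant (= product) map $\Lambda \to \F_q^\times$ is surjective with this kernel. (3) Classify such $\Lambda$: since $\F_q^\times$ is cyclic of order $q-1$ with generator $\varpi$, a subgroup of $\F_q^\times \times \F_q^\times$ whose product-projection is onto and whose kernel is $\{(a,a^{-1}): a^{2^r}\cdot(\text{something})\}$ — precisely, $\Lambda$ must be generated by the antidiagonal $A_{2^r}$-part together with one element $(\varpi^{n+1}, \varpi^{-n})$ of determinant $\varpi$, for some integer $n$. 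Running $n$ over $0 \le n < q-1$ gives all possibilities, and $D(n)$ is exactly the cyclic group generated by $\begin{psmallmatrix}\varpi^{n+1} & 0\\ 0 & \varpi^{-n}\end{psmallmatrix}$. (4) Finally, lift back: $H_1 = D(n)\cdot H_{odd} = GH(n)$, after checking that the coset representative can be taken diagonal (the $U$-ambiguity is absorbed since $U \subset H_{odd}$) and that different choices of the "extra generator" within the same coset of the antidiagonal give the same $D(n)\cdot H_{odd}$.

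\textbf{Main obstacle.} The delicate point is step~(1): proving that $U$ is normal in all of $H_1$, not merely in $H_1 \cap \SL_2$. The identification "$U$ = unipotent elements of $H_1$" needs the observation that any $p$-element of $\GL_2(\F_q)$ has determinant $1$ (a $p$-power root of unity in $\F_q^\times$ is $1$ since $\gcd(p, q-1) = 1$), so every $p$-element of $H_1$ already lies in $H_1 \cap \SL_2 = H_{odd}$; combined with the fact that $U$ is the unique Sylow $p$-subgroup of $H_{odd}$, this shows $U$ is the unique Sylow $p$-subgroup of $H_1$ and hence normal. This is presumably the step that Bezrukavnikov and Pilkington simplified, as the acknowledgements mention. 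After that, the structure of subgroups of the torus $\F_q^\times \times \F_q^\times$ is elementary cyclic-group bookkeeping. A secondary subtlety is keeping track of the parametrization: one should check that the map $n \mapsto GH(n)$ is well-defined on $\Z/(q-1)$ and that the argument produces \emph{some} such $n$ rather than needing all of them to be distinct (distinctness, if needed later, is a separate easy computation).
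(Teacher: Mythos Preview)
Your proposal is correct and reaches the same conclusion as the paper, but the argument for step~(1) is genuinely different from the paper's. The paper establishes that $H_1$ consists of upper-triangular matrices by a direct conjugation calculation: since $H_1 \cap \SL_2(\F_q) = H_{odd}$ is normal in $H_1$ and contains every unipotent $\begin{psmallmatrix}1 & x\\ 0 & 1\end{psmallmatrix}$, conjugating such an element by an arbitrary $\begin{psmallmatrix}a & b\\ c & d\end{psmallmatrix} \in H_1$ and reading off the $(2,1)$-entry forces $c=0$. Your route instead observes that every $p$-element of $H_1$ has determinant $1$ (as $\gcd(p,q-1)=1$), hence lies in $H_{odd}$, so $U$ is the unique Sylow $p$-subgroup of $H_1$ and therefore normal; then $H_1 \subset N_{\GL_2}(U) = B$. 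Your argument is more conceptual and avoids matrix entries, while the paper's is a two-line explicit computation---this is precisely the step the acknowledgements flag, and the paper's version is the Bezrukavnikov--Pilkington simplification.

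For the second half, your classification of $\Lambda \subset \F_q^\times \times \F_q^\times$ via the product map and your lifting back through $U \subset H_{odd}$ is exactly the paper's argument phrased in the quotient $B/U$: the paper picks a diagonal $X \in H_1$ with $\det X = \varpi$, writes it as $X_n = \begin{psmallmatrix}\varpi^{n+1} & 0\\ 0 & \varpi^{-n}\end{psmallmatrix}$, and takes its powers as coset representatives for $H_1/H_{odd}$. So from step~(2) onward the two proofs coincide.
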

\begin{proof} The proof of this lemma is in two steps. In the first step we show that $H_1$ is a subgroup of upper triangular matrices in $\GL_2(\F_q)$, and then we identify it explicitly. The simple argument we give for the first step was suggested independently by Roman Bezrukavnikov and Annette Pilkington. We start with the observation that $H_1 \cap \SL_2(\F_q)$ is normal in $H_1$. By Lemma \ref{lem5.5}, $H_1 \cap \SL_2(\F_q)$ consists of upper triangular matrices and contains all upper triangular unipotent matrices. Suppose $\begin{pmatrix} a & b \\ c & d \end{pmatrix} \in H_1$, and let $\begin{pmatrix} 1 & x \\ & 1 \end{pmatrix}$ be an arbitrary upper triangular unipotent matrix. Then since the matrix 
$$
\begin{pmatrix} a & b \\ c & d \end{pmatrix} \begin{pmatrix} 1 & x \\ & 1 \end{pmatrix} \begin{pmatrix} a & b \\ c & d \end{pmatrix}^{-1} = \begin{pmatrix} * & * \\ - \frac{c^2x}{ad-bc} & * \end{pmatrix}
$$ 
for all $x$, we must have $c =0$. 

Now we proceed to identify $H_1$ explicitly. By the proof of Lemma \ref{lemm32} and the statements of Lemmas \ref{lemma35} and \ref{lem5.5} we have 
\begin{equation}\label{eq3.2}
[\GL_2(\F_q): H_1] = [\SL_2(\F_q): H_{odd}]. 
\end{equation}
This means $|H_1| = (q-1)\cdot  |H_{odd}|$. So we need to find $(q-1)$ representatives for the quotient $H_1/H_{odd}$. By Lemma \ref{lemma35}, the determinant $\det: H_1 \to \F_q^\times$ is surjective. In particular, if $\varpi$ is a generator of $\F_q^\times$, there is a matrix $X$ in $H_1$, upper triangular by the first part, such that $\det X = \varpi$.  Since by Lemma \ref{lem5.5}, $H_1$ contains all upper triangular unipotent matrices, we may assume that $X$ is diagonal.  Since $\varpi$ is a generator of $\F_q^\times$, we may write $X =\begin{pmatrix} \varpi^{n+1} \\ & \varpi^m \end{pmatrix}$.  Since $\varpi = \det X = \varpi^{n+m+1}$, we conclude $n+m \equiv 0 \mod (q-1)$, or $m \equiv - n$.  So if we let $X_n = \begin{pmatrix} \varpi^{n+1} \\ & \varpi^{-n} \end{pmatrix}$, then $X_n \in H_1$ and $\det X_n = \varpi$. The elements $\{X_n^i \mid 0 \leq i < q-1\}$ provide the $(q-1)$ representatives for $H_1/H_{odd}$ that we need.  \end{proof}
\begin{coro}[From the proof]
We have 
$$
[\GL_2(\F_q):H_1] = p(\SL_2(\F_q)). 
$$
\end{coro}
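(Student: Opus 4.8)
The plan is to read the statement straight off the chain of equalities already assembled during the proof of Lemma \ref{lemm5.6}; no new argument is needed. First I would recall Equation \eqref{eq3.2}, namely $[\GL_2(\F_q): H_1] = [\SL_2(\F_q): H_{odd}]$, which was derived there by combining three facts: the multiplicativity identity $[\GL_2(\F_q):H_1] = [\SL_2(\F_q): H_1 \cap \SL_2(\F_q)] \cdot [\GL_2(\F_q): H_1 \cdot \SL_2(\F_q)]$ extracted from the proof of Lemma \ref{lemm32}; the equality $H_1 \cdot \SL_2(\F_q) = \GL_2(\F_q)$ from Lemma \ref{lemma35}, which forces the second index to be $1$; and the identification $H_1 \cap \SL_2(\F_q) = H_{odd}$ supplied by Lemma \ref{lem5.5}.

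Next I would invoke the description of the minimal faithful permutation representation of $\SL_2(\F_q)$ recalled in \S\ref{subsec:sl2}: by Theorem 3.6 of \cite{sl2}, $H_{odd}$ is a corefree subgroup of $\SL_2(\F_q)$ of minimal index, so the action of $\SL_2(\F_q)$ on $\SL_2(\F_q)/H_{odd}$ is a minimal faithful representation, whence $[\SL_2(\F_q): H_{odd}] = p(\SL_2(\F_q))$ (explicitly, $(q-1)_2(q+1)$). Feeding this into Equation \eqref{eq3.2} yields $[\GL_2(\F_q): H_1] = p(\SL_2(\F_q))$, which is the assertion.

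Since every ingredient has already been established, there is no genuine obstacle here — the corollary is pure bookkeeping, recorded separately because it is used later. The one point worth flagging is that the normalization fixed just before the corollary, namely that we have arranged $H_1 \cap \SL_2(\F_q) = H_{odd}$ exactly (not merely up to conjugacy), must be in force so that Equation \eqref{eq3.2} can be quoted verbatim; but this is precisely the content of the remark following the proof of Lemma \ref{lem5.5}, so nothing further is required.
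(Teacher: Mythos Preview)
Your proposal is correct and matches the paper's own proof, which simply cites Equation~\eqref{eq3.2}; you have merely unpacked the implicit step that $[\SL_2(\F_q):H_{odd}]=p(\SL_2(\F_q))$ from \S\ref{subsec:sl2}.
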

\begin{proof} This is Equation \eqref{eq3.2}.
\end{proof}

\begin{lemm}\label{lemm5.8} We have 
$$
\core_{\GL_2(\F_q)}(H_1) = Z_{2^r}. 
$$
\end{lemm}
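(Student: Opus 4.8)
The plan is to compute $\core_{\GL_2(\F_q)}(H_1)$ directly by first locating it inside the center and then identifying which subgroup of $Z$ it is. Since $H_1 \cap \SL_2(\F_q) = H_{odd}$ does not contain $-I$ (indeed it has odd order), $H_1$ cannot contain $\SL_2(\F_q)$; but any normal subgroup of $\GL_2(\F_q)$ that is not contained in $Z$ must contain $\SL_2(\F_q)$ (this is the standard normal subgroup structure of $\GL_2$, and it is exactly the principle already invoked in the proof of Lemma~\ref{coreGHodd}). Hence $\core_{\GL_2(\F_q)}(H_1) \subseteq Z$, and since every subgroup of $Z$ is normal, $\core_{\GL_2(\F_q)}(H_1) = Z \cap H_1$.

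Next I would compute $Z \cap H_1$ using the explicit description $H_1 = GH(n) = D(n)\cdot H_{odd}$ obtained from Lemma~\ref{lemm5.6}. An arbitrary element of $H_1$ is $\begin{pmatrix} \varpi^{(n+1)k} \\ & \varpi^{-nk}\end{pmatrix}\begin{pmatrix} a \\ & a^{-1}\end{pmatrix}\begin{pmatrix} 1 & x \\ & 1 \end{pmatrix}$ with $a \in A_{2^r}$; for this to be scalar we need $x = 0$ and the two diagonal entries $\varpi^{(n+1)k}a$ and $\varpi^{-nk}a^{-1}$ equal. Writing $a = \varpi^{2^r\ell}$ (so $a \in A_{2^r}$), the scalar condition becomes $\varpi^{(n+1)k + 2^r\ell} = \varpi^{-nk - 2^r\ell}$, i.e. $(2n+1)k + 2^{r+1}\ell \equiv 0 \pmod{q-1}$, and the resulting scalar value is $\varpi^{(n+1)k + 2^r \ell}$. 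I would then determine the set of scalars $b = \varpi^{(n+1)k+2^r\ell}$ attainable subject to this congruence. The upshot should be that this set of scalars is exactly $A_{2^r}$, i.e. the image has order $(q-1)/2^r$, giving $Z \cap H_1 = Z_{2^r}$. A clean way to see the inclusion $Z_{2^r} \subseteq Z \cap H_1$ is the identity already used in Lemma~\ref{coreGHodd}: for $a \in A_{2^r}$, $\begin{pmatrix} a^2 \\ & 1\end{pmatrix}\begin{pmatrix} a^{-1} \\ & a\end{pmatrix} = \begin{pmatrix} a \\ & a\end{pmatrix}$, where the first factor lies in $D_1 \supseteq D(n)$... one must be slightly careful here since $D(n)$ is not all of $D_1$, so instead I would argue via $D(n)$ directly: note that $2n+1$ is odd, hence coprime to $2^{r+1}$, so for any target scalar $b = \varpi^{2^r j}$ one can solve $(2n+1)k \equiv 2^r j \pmod{2^{r+1}}$-type constraints; and conversely since $2^{r+1} \mid (2n+1)k$ forces $2^{r+1}\mid k$ modulo the odd part, the $\varpi^{(n+1)k}$ contribution is confined to $A_{2^r}$.

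For the reverse inclusion $Z \cap H_1 \subseteq Z_{2^r}$ I would argue by index: we have $[\GL_2(\F_q):H_1] = p(\SL_2(\F_q)) = (q-1)_2(q+1)$ from the corollary just proved, so $|H_1| = |\GL_2(\F_q)|/((q-1)_2(q+1)) = (q-1)\cdot|H_{odd}|$, matching $|D(n)|\cdot|H_{odd}|/|D(n)\cap H_{odd}|$; tracking $D(n) \cap H_{odd}$ pins down that the diagonal-scalar part can be no bigger than $A_{2^r}$. The main obstacle I anticipate is the bookkeeping with the exponents $n$, $k$, $\ell$ modulo $q-1$ versus modulo $2^{r+1}$ and the odd part $m$ — in particular making sure the argument is uniform in $n$ and does not accidentally depend on $\gcd(n, q-1)$. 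Everything else (the reduction to $Z \cap H_1$, and the final identification) is routine given the earlier lemmas. So the skeleton is: (1) $\core = Z \cap H_1$ via normal-subgroup structure; (2) parametrize $Z \cap H_1$ using $H_1 = D(n)H_{odd}$; (3) solve the exponent congruence to get $Z\cap H_1 = Z_{2^r}$, cross-checking with the known index $[\GL_2(\F_q):H_1] = p(\SL_2(\F_q))$.
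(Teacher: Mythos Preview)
Your plan matches the paper's proof: both reduce $\core_{\GL_2(\F_q)}(H_1)$ to $Z \cap GH(n)$ via the normal subgroup structure of $\GL_2(\F_q)$, parametrize the diagonal elements of $D(n)\cdot H_{odd}$, and solve the resulting congruence $(2n+1)k + 2^{r+1}\ell \equiv 0 \pmod{q-1}$ to identify the achievable scalars as exactly $A_{2^r}$. The paper carries this out through a somewhat longer gcd computation with $g=\gcd(2n+1,q-1)$, but your observation that $2n+1$ is odd is the essential point and, once you split the modulus as $q-1=2^r m$ and work separately mod $2^r$ and mod $m$, gives both inclusions directly---so the index argument you mention at the end is unnecessary.
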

\begin{proof}
By Lemma \ref{lemm5.6}, it suffices to prove $\core_{\GL_2(\F_q)}(GH(n)) = Z_{2^r}$ for each $n$, and that means we need to determine $Z \cap GH(n)$.  Suppose we have an element of the form 
$$
t = \begin{pmatrix} a^{n+1} \\ & a^{-n} \end{pmatrix} \cdot \begin{pmatrix} b^{-1} \\ & b \end{pmatrix}, \quad a \in \F_q^\times, b \in A_{2^r}
$$
and suppose $t \in Z$. This means $a^{2n+1} = b^2$.  Write $a = \varpi^i, b = \varpi^{2^r \cdot j}$. Then we have 
$$
(2n+1) i \equiv 2^{r+1} j \mod (q-1). 
$$
Let $g = \gcd(2n+1, q-1)$. Then $j = gu$ for some $u$.  Then if $k$ is a multiplicative inverse of $(2n+1)/g$ modulo $(q-1)/g$, we have 
$$
i \equiv k \cdot 2^{r+1} \cdot u \mod \frac{q-1}{g}, 
$$
or 
$$
i =  k \cdot 2^{r+1} \cdot u +  \frac{q-1}{g} s 
$$
for some $s$.  So if for any $u, s$ we set $a = \varpi^i, b = \varpi^{2^r \cdot j}$ with $i, j$ satisfying 
$$
\begin{cases}
i = k \cdot 2^{r+1} \cdot u +  \frac{q-1}{g} s  \\
j = u g, 
\end{cases}
$$
then $a^{2n+1} = b^2$. Now we examine the matrix $t$. We see that $a^{-n} \cdot b$ is equal to $\varpi$ raised to the power 
$$
-n(k \cdot 2^{r+1} \cdot u +  \frac{q-1}{g} s ) + u \cdot g \cdot 2^{r}
$$
$$
= (-2 nk + g)2^r \cdot u - n \cdot \frac{q-1}{g} \cdot  s 
$$
$$
= 2^r \cdot \big\{  (-2 nk + g) \cdot u - n \cdot \frac{q-1}{2^r \cdot g} \cdot  s  \big\}. 
$$
We will show that 
\begin{equation}\label{eq32}
\gcd( 2 nk - g, n \cdot \frac{q-1}{g}  ) = 1. 
\end{equation}
Let us first look at $\gcd(2nk-g, n)$. This is equal to $\gcd (g, n)$ which is equal to $1$, as $g \mid 2n+1$ and $\gcd(n, 2n+1) = 1$. This means 
$$
\gcd( 2 nk - g, n \cdot \frac{q-1}{g}  ) = \gcd(2nk - g, \frac{q-1}{g})
$$
$$
= \gcd((2n+1)k - g - k, \frac{q-1}{g}) 
$$
$$
= \gcd(g \left\{\frac{2n+1}{g} \cdot k -1 \right\} -k, \frac{q-1}{g})
$$
$$
= \gcd(-k, \frac{q-1}{g})
$$
$$
=1. 
$$
In the above computation we have used the fact that $k$ is multiplicative inverse of $(2n+1)/g$ modulo $(q-1)/g$, so $k \cdot (2n+1)/g-1$ is divisible by $(q-1)/g$. Now that we have established Equation \eqref{eq32} we observe that since $-2nk + g$ is odd we have 
$$
 \gcd ( -2 nk + g , - n \cdot \frac{q-1}{2^r \cdot g} )=1. 
$$
This means that there are integers $s, u$ such that the corresponding $a, b$ satisfy $a^{-n} b = a^{n+1} b^{-1} = \varpi^{2^r}$, and that whenever $a^{-n} b = a^{n+1}b^{-1}$ for $a \in \F_q^\times, b \in A_{2^r}$, then the common value is of the form $\varpi^{ f \cdot 2^r}$ for some integer $f$. This finishes the proof of the lemma. 
\end{proof}

Now that we have identified the possibilities for $H_1$ and its core, we optimize the choices of $H_2, \dots, H_\ell$.  Define natural numbers $t_2, \dots, t_\ell$ by setting 
$$
Z \cap H_i = Z_{t_i}, \quad 2 \leq i \leq \ell. 
$$
We can pick each $t_i$ to be a divisor of $q-1$. By Equation \eqref{capst} and Lemma \ref{lemm38}, the statement 
$$
\core_{\GL_2(\F_q)} (H_1 \cap \dots \cap H_\ell) = \{e\}
$$
is equivalent to 
$$
\lcm(2^r, t_2, \dots, t_\ell) = q-1. 
$$
\begin{coro}\label{lemm39}
Suppose $t \mid q-1$, and $t \ne q-1$. Let $H(t)$ be the subgroup of $\GL_2(\F_q)$ with minimal $[\GL_2(\F_q): H]$ among the subgroups that satisfy $Z \cap H = Z_t$. Then
$$
H(t) = \begin{cases}
\GL_2(\F_q)^t & t \text{ odd}; \\
\GL_2(\F_q)^{2t} & t \text{ even}. 
\end{cases}
$$
Furthermore, 
$$
[\GL_2(\F_q): H(t)] = \begin{cases}
t & t \text{ odd}; \\
2t & t \text{ even}. 
\end{cases}
$$
\end{coro}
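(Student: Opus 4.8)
The plan is to prove this corollary by combining the general structure lemma about subgroups of $\glq$ with an elementary case analysis on the parity of $t$. First I would recall that by Lemma \ref{core} we have $Z \cap \glq^{t'} = Z_{t'/\gcd(2,t')}$, since $n = 2$ here, and that by Lemma \ref{subgrpscontainsln} every subgroup of $\glq$ containing $\slq$ is of the form $\glq^{t'}$ for some $t' \mid q-1$, with index $[\glq:\glq^{t'}] = t'$ by Equation \eqref{indext}.

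Next I would argue, exactly as in the proof of Lemma \ref{lemma39} (the general-$n$ analogue already stated in the excerpt), that the subgroup $H(t)$ realizing the minimal index subject to $Z \cap H = Z_t$ must contain $\slq$. The argument: for any $H$ with $Z\cap H = Z_t$, Lemma \ref{lemm31} gives $[\glq:H] = [\slq: H\cap\slq]\cdot[\glq: H\cdot\slq]$, and since $H\cdot\slq$ is a subgroup of $\glq$ containing both $\slq$ and $Z_t$, replacing $H$ by the largest subgroup of $\glq$ containing $\slq$ and $Z_t$ only decreases (or preserves) the index. Hence we may assume $\slq \subseteq H(t)$, so $H(t) = \glq^{t'}$ for some $t'\mid q-1$, and the constraint $Z\cap\glq^{t'} = Z_{t'/\gcd(2,t')} = Z_t$ forces $t = t'/\gcd(2,t')$.

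Now the case analysis. If $t$ is odd, then $t' = t$ works (giving $\gcd(2,t)=1$), and $t' = 2t$ also satisfies $2t/\gcd(2,2t) = 2t/2 = t$; to minimize the index $t'$ we take $t' = t$, so $H(t) = \glq^t$ with index $t$. If $t$ is even, then $t' = t$ would give $t'/\gcd(2,t') = t/2 \ne t$, so that fails; the smallest valid $t'$ is $t' = 2t$, which gives $2t/\gcd(2,2t) = 2t/2 = t$, hence $H(t) = \glq^{2t}$ with index $2t$. This matches the claimed formulas. One should double-check that $2t \mid q-1$ in the even case: since $q-1$ is even and $t \mid q-1$ with $t$ even, writing $q - 1 = 2^{r}m$ and $t = 2^{j}d$ with $d,m$ odd and $1\le j\le r$, $d\mid m$, we need $2^{j+1}d \mid 2^r m$, i.e. $j+1 \le r$; this holds because the constraint $t \ne q-1$ together with $t$ even and $d \mid m$ forces $j < r$ unless $d < m$, and in the borderline case one verifies directly that no even $t$ with $t \ne q - 1$ can have $j = r$ and $d = m$. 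The main obstacle I anticipate is precisely this divisibility bookkeeping in the even case — ensuring $\glq^{2t}$ is actually defined (i.e. $2t \mid q-1$) and is genuinely the minimum — rather than any conceptual difficulty; the core-containment argument is a direct transcription of Lemma \ref{lemma39}'s proof specialized to $n = 2$.
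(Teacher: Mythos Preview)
Your approach mirrors the paper's exactly: the corollary is stated in the paper without a separate proof, as a direct specialization of Lemma~\ref{lemma39} to $n=2$, and your argument is precisely that specialization together with the natural parity case-split on $t$.

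There is, however, a genuine gap in your divisibility bookkeeping for the even case. You correctly reduce to needing $j+1 \le r$ (in your notation $q-1 = 2^r m$, $t = 2^j d$), and you observe that $t \ne q-1$ forces either $j < r$ or $d < m$. But you then only dispose of the case $j=r,\ d=m$, which is vacuous, and never address the case $j=r,\ d<m$. That case does occur: take $q-1=12$, $t=4$, so $r=j=2$, $m=3$, $d=1$; then $2t=8 \nmid 12$ and $\GL_2(\F_q)^{2t}$ is simply not defined. In fact for such $t$ (namely even $t$ with $(q-1)_2 \mid t$ and $t \ne q-1$) one checks from Lemma~\ref{core} that \emph{no} subgroup of the form $\GL_2(\F_q)^{t'}$ satisfies $Z \cap \GL_2(\F_q)^{t'} = Z_t$, so the corollary as literally stated cannot hold there. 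The paper glosses over this edge case as well; it is harmless for the main theorem because the subsequent optimization (Problem~\ref{probgl2} and the lemma after it) forces all the relevant $t_i$ to be odd, so the even branch of the corollary is never invoked.
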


To finish the proof of Theorem \ref{theo11} we have to solve the following optimization problem for $n=q-1$.   
\begin{prob}\label{probgl2}
Suppose a natural number $n = 2^r m$ with $m$ odd is given. For a natural number $t$, set $\epsilon (t) = (3 + (-1)^t)/2$.  Find natural numbers $\ell, t_2, \dots, t_\ell$ such that 
\begin{itemize}
\item $\lcm(2^r, t_2, \dots, t_\ell) = n$;
\item $\sum_{i=2}^r \epsilon(t_i) t_i$ is minimal.  
\end{itemize}
\end{prob}
We call $(t_2, \dots, t_\ell)$ the {\em optimal choice} for $n$. 
\begin{lemm}
Write $n = 2^r p_1^{e_1} \cdots p_k^{e_k}$ with $p_i$'s distinct odd primes. Then  the optimal choice for $n$ is $(p_1^{e_1}, \dots, p_k^{e_k})$. 
\end{lemm}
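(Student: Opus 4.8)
The plan is to solve Problem~\ref{probgl2} for $n=q-1=2^r p_1^{e_1}\cdots p_k^{e_k}$ by first showing that the choice $(p_1^{e_1},\dots,p_k^{e_k})$ is \emph{feasible} and then showing it is \emph{optimal}. Feasibility is immediate: since each $p_i$ is odd, $\epsilon(p_i^{e_i})=1$, and $\lcm(2^r,p_1^{e_1},\dots,p_k^{e_k})=q-1=n$ by construction, so this choice satisfies both constraints and the objective value it attains is $\sum_i p_i^{e_i}=T_2(q-1)$. Combined with the feasibility half this already gives the inequality $\le$; the content is the reverse inequality, i.e. that no feasible choice does better.

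For optimality, I would argue as follows. Let $(t_2,\dots,t_\ell)$ be any feasible tuple, so $\lcm(2^r,t_2,\dots,t_\ell)=n$. For each odd prime $p_i$ with $p_i^{e_i}\|n$, since $p_i\nmid 2^r$, there must be some index $j$ with $p_i^{e_i}\mid t_j$; fix one such $j=j(i)$. The first step is to reduce to the case where the $t_j$'s are prime powers: given a feasible tuple, I claim one can replace any $t_j$ by the list of its prime-power divisors without increasing $\sum\epsilon(t_j)t_j$ and without destroying feasibility. Feasibility is clear since the $\lcm$ only depends on the set of prime powers dividing the entries. For the objective, if $t_j$ is odd then $\epsilon(t_j)t_j = t_j$, and for a product of coprime factors each $>1$, Lemma~\ref{lemm33} gives $\sum(\text{factors}) \le \prod(\text{factors}) = t_j$ — wait, one must be careful when $t_j$ has only one prime factor (no change) or when one factor is the $2$-power; if $t_j=2^a u$ with $u$ odd $>1$, then $\epsilon(t_j)t_j = 2\cdot 2^a u \ge 2^a + u + (\text{smaller terms})$, which dominates $\epsilon(2^a)2^a+\epsilon(u)u = 2\cdot 2^a + u$ as long as $a\ge 1$ and $u\ge 3$ (and if $u=1$ there is nothing to split). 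So after finitely many such moves we may assume every $t_j$ is a prime power. Moreover any $t_j$ that is a power of $2$ can be dropped entirely, since $2^r$ is already present in the $\lcm$ list and dropping it only decreases the objective; and any $t_j$ with $p_i^{c}$, $c<e_i$, is redundant and can be dropped once a higher power of $p_i$ is present, while if the maximal power of $p_i$ appearing among the $t_j$'s were $p_i^{c}$ with $c<e_i$ the tuple would not be feasible.

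After these reductions we are left with a feasible tuple consisting, for each $i$, of at least one entry divisible by $p_i^{e_i}$ and equal to a power of $p_i$ that is $\le p_i^{e_i}$ hence equal to $p_i^{e_i}$; duplicate entries only increase the objective, so the optimal feasible tuple is exactly $(p_1^{e_1},\dots,p_k^{e_k})$, with objective value $T_2(q-1)$. Finally, translating back through Corollary~\ref{lemm39}: the minimal degree of $H_i$ with $Z\cap H_i=Z_{t_i}$ is $\epsilon(t_i)t_i$, so the minimal total size of the faithful collection is $[\GL_2(\F_q):H_1]+\sum_{i\ge 2}\epsilon(t_i)t_i = p(\SL_2(\F_q))+T_2(q-1)$, which matches the upper bound of Corollary~\ref{coro5.4} and proves Theorem~\ref{theo11}. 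The main obstacle I anticipate is making the prime-power reduction step fully rigorous — in particular handling the interaction of the $2$-part of a composite $t_j$ with the already-present $2^r$, and being careful that Lemma~\ref{lemm33}'s hypothesis (at least one factor $>2$) is met or handled separately when all odd prime factors of $t_j$ equal a single $p_i$ to the first power.
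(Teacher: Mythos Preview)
Your proposal is correct and follows essentially the same strategy as the paper: reduce an optimal tuple to one consisting of odd prime powers via Lemma~\ref{lemm33}, then eliminate redundancies. The only difference is in the order of operations. The paper first observes that any even $t_i=2s$ can be replaced by $s$ without changing the $\lcm$ (since $2^r$ is already present) while strictly decreasing $\epsilon(t_i)t_i$ from $4s$ to at most $2s$; iterating, all $t_i$ become odd, and \emph{then} Lemma~\ref{lemm33} splits each into prime powers. You instead split directly into prime-power factors and handle the $2$-part as a special case, which forces the extra inequality $2\cdot 2^a u \ge 2\cdot 2^a + u$ and the attendant worry about Lemma~\ref{lemm33}'s hypothesis. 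Both routes work, but stripping the factor of $2$ first (as the paper does) avoids all of that case analysis: once every $t_i$ is odd, every prime-power factor is at least $3$, so Lemma~\ref{lemm33} applies without caveat. Your closing paragraph correctly connects the lemma back to Theorem~\ref{theo11}, though that lies outside the lemma itself.
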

\begin{proof}
Suppose $(t_2, \dots, t_\ell)$ is an optimal choice for $n$. If some $t_i$ is even, say equal to $2s$, replacing $t_i$ by $s$ does not change the $\lcm$ in the statement Problem \ref{probgl2}, but 
decreases the value of $\sum_i \epsilon(t_i) t_i$.  Since $(t_2, \dots, t_\ell)$ is optimal for $n$, this means that all of the $t_i$'s have to be odd. Next, write each $t_i$ as the product of prime powers $\pi_1^{m_1} \cdots \pi_v^{m_v}$. By Lemma \ref{lemm33}, $\sum_j \pi_j^{m_j} \leq t_i$ with equality only when $v=1$. Again, since $(t_2, \dots, t_\ell)$ is optimal, this means each $t_i$ is a prime power. It is also clear that if $i \ne j$, then $(t_i, t_j) =1$, because otherwise $t_i, t_j$ will be powers of the same prime, and we can throw away the one with smaller exponent. 
\end{proof}

Putting everything together we have proved the following theorem: 

\begin{theo}\label{theo313}
We have 
$$
p(\GL_2(\F_q)) = p(\SL_2(\F_q)) + T_{2,q}(q-1). 
$$
For $0 \leq n \leq q-2$, set 
$$
\mathcal C_n = \{GH(n), \GL_2(\F_q)^{p_1^{e_1}}, \dots, \GL_2(\F_q)^{p_k^{e_k}} \}.
$$
Up to conjugacy we have $q-1$ classes of minimal faithful collections for $\GL_2(\F_q)$ and they are given by $\mathcal C_n$, $0 \leq n \leq q-2$. \end{theo}

\section{Minimal faithful permutation representations of $\GL_3(\F_q)$}\label{sec:mingl3}
In this section we will be proving

\begin{theo}\label{theo314} 
We have 
$$
p(\GL_3(\F_q)) = p(\SL_3(\F_q)) + T_{3,q}(q-1). 
$$
For $0 \leq n \leq q-2$ set 
$$
\mathcal C_n = \{GH(n), \GL_3(\F_q)^{p_1^{e_1}}, \dots, \GL_3(\F_q)^{p_k^{e_k}} \}.
$$
Then up to conjugacy we have $q-1$ classes of minimal faithful sets for $\GL_3(\F_q)$ and they are given by the sets $\mathcal C_n$, $0 \leq n \leq q-2$.  \end{theo}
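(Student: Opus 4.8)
The plan is to transpose the proof of Theorem~\ref{theo313} for $\GL_2(\F_q)$ to $n=3$, the new features being a case split on $g:=\gcd(3,q-1)\in\{1,3\}$ and bookkeeping inherited from Lemmas~\ref{core} and \ref{lemma39}. First, the upper bound $p(\GL_3(\F_q))\le p(\SL_3(\F_q))+T_{3,q}(q-1)$ is Corollary~\ref{coro25} combined with Theorem~\ref{sl3}: in each of the three regimes ($g=1$; $g=3$ with $\tfrac{q-1}{3}<(q-1)_3$; $g=3$ with $\tfrac{q-1}{3}>(q-1)_3$) the collection $\mathcal C_0$ realizes this number, since $[\GL_3(\F_q):GH(0)]=p(\SL_3(\F_q))$ and the factors $\GL_3(\F_q)^{p_i^{e_i}}$ contribute $\sum_i p_i^{e_i}=T_{3,q}(q-1)$. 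Now let $\mathcal C=\{H_1,\dots,H_\ell\}$ be a minimal faithful collection. If $g=1$ there is no central element of order $3$, $\GL_3(\F_q)\cong\SL_3(\F_q)\times Z$ with $\SL_3(\F_q)$ simple, and a short argument along the same lines (some $H_i$ must fail to contain $\SL_3(\F_q)$, forcing $[\GL_3(\F_q):H_i]\ge p(\SL_3(\F_q))=\tfrac{q^3-1}{q-1}$, while the remaining subgroups kill $Z$ as in the construction $\{P\cdot A,\GL_3(\F_q)^{q_j^{b_j}}\}$ of Proposition~\ref{fcoll}) gives the result; so assume $g=3$. Since $3$ is prime, Lemma~\ref{lemma35} applies: exactly one $H_i$, relabelled $H_1$, fails to contain the order-$3$ central element, and $H_1\cdot\SL_3(\F_q)=\GL_3(\F_q)$.

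Next I would identify $H_1$. By Lemma~\ref{lemm32} and $H_1\cdot\SL_3(\F_q)=\GL_3(\F_q)$ we get $[\GL_3(\F_q):H_1]=[\SL_3(\F_q):H_1\cap\SL_3(\F_q)]$ with $H_1\cap\SL_3(\F_q)$ core-free in $\SL_3(\F_q)$. Running through the classification of subgroups of $\SL_3(\F_q)$ from Table 8.3 of \cite{sln} as in the proof of Theorem~\ref{sl3}, and using that minimality of $\mathcal C$ together with the upper bound and Corollary~\ref{coro34} force $[\GL_3(\F_q):H_1]\le p(\SL_3(\F_q))+T_{3,q}(q-1)<2\,p(\SL_3(\F_q))$, one pins $H_1\cap\SL_3(\F_q)$ down to a conjugate of the minimal core-free subgroup of $\SL_3(\F_q)$ from Theorem~\ref{sl3} (namely $G_3$, or $G_3'$ in the regime $\tfrac{q-1}{3}<(q-1)_3$). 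After conjugating, assume equality. Then, exactly as in Lemma~\ref{lemm5.6}: $H_1\cap\SL_3(\F_q)$ is normal in $H_1$ and contains the unipotent radical of the ambient parabolic, so conjugating a general element of $H_1$ by unipotents forces $H_1$ into the parabolic $P$; counting gives $|H_1|=(q-1)\,|H_1\cap\SL_3(\F_q)|$, and since $\det\colon H_1\to\F_q^\times$ is onto (Lemma~\ref{lemma35}) the $q-1$ cosets of $H_1/(H_1\cap\SL_3(\F_q))$ are represented by powers of a single diagonal torus element, so $H_1=GH(n)$ for a unique $n$ with $0\le n\le q-2$.

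With $H_1$ in hand I would compute its core and optimize the rest. As in Lemma~\ref{lemm5.8}, a $\gcd$ computation (using Lemma~\ref{core}) gives $\core_{\GL_3(\F_q)}(GH(n))=Z_{(q-1)_3}$, independent of $n$. Writing $Z\cap H_i=Z_{t_i}$ with $t_i\mid q-1$ for $2\le i\le\ell$, Equation~\eqref{capst} makes faithfulness of $\mathcal C$ equivalent to $\lcm\big((q-1)_3,t_2,\dots,t_\ell\big)=q-1$, and by Lemma~\ref{lemma39} the least $[\GL_3(\F_q):H_i]$ with $Z\cap H_i=Z_{t_i}$ equals $t_i$ when $3\nmid t_i$ and $3t_i$ when $3\mid t_i$; so one must minimize $\sum_{i\ge2}\epsilon(t_i)t_i$, $\epsilon(t)\in\{1,3\}$ accordingly, under that $\lcm$ constraint --- the exact analogue of Problem~\ref{probgl2}. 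The two reductions of the $\GL_2$ optimization lemma apply verbatim: drop the factor $3$ from any $t_i$ divisible by it (the $(q-1)_3$ in the $\lcm$ already supplies the $3$-part), and split each remaining $t_i$ into prime powers by Lemma~\ref{lemm33}; the unique optimum is $(t_2,\dots,t_\ell)=(p_1^{e_1},\dots,p_k^{e_k})$, the prime-power factors of $q-1$ other than the $3$-part, realized by $H_i=\GL_3(\F_q)^{p_i^{e_i}}$ with $\sum_i p_i^{e_i}=T_{3,q}(q-1)$. Hence $\sum_i[\GL_3(\F_q):H_i]=p(\SL_3(\F_q))+T_{3,q}(q-1)$, which with the upper bound determines $p(\GL_3(\F_q))$ and shows $\mathcal C$ is conjugate to $\mathcal C_n$; conversely each $\mathcal C_n$ is faithful of this size, and distinct $\mathcal C_n$ are non-conjugate because the $GH(n)$ are (exactly as for $\GL_2$), giving the $q-1$ conjugacy classes.

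\textbf{Main obstacle.} The delicate point is the identification of $H_1\cap\SL_3(\F_q)$: it requires the full maximal-subgroup classification of $\SL_3(\F_q)$ and a verification, valid in all three regimes, that no core-free subgroup of $\SL_3(\F_q)$ --- including the members of the geometric classes $\mathscr C_i$, the class $\mathcal S$, and their core-free subgroups --- has index small enough that, after multiplying by the factor $[\GL_3(\F_q):H_1\SL_3(\F_q)]$ of Lemma~\ref{lemm32}, it could beat $p(\SL_3(\F_q))+T_{3,q}(q-1)$. The secondary nuisance, absent for $\GL_2$, is the $3\mid t$ versus $3\nmid t$ dichotomy of Lemma~\ref{lemma39}: one must confirm it is never cheaper to route part of the $3$-power of $q-1$ through the $H_i$ with $i\ge2$ instead of through $\core(H_1)$.
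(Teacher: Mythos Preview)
Your proposal is correct and follows essentially the same route as the paper: the upper bound from Corollary~\ref{coro25}, isolation of the unique $H_1$ via Lemma~\ref{lemma35}, identification of $H_1\cap\SL_3(\F_q)$ as the minimal core-free subgroup of $\SL_3(\F_q)$ (the paper's Lemma~\ref{lemm36}), the unipotent-conjugation argument forcing $H_1$ into the parabolic and hence $H_1=GH(n)$ (Lemma~\ref{lemm37}), the core computation $\core_{\GL_3}(GH(n))=Z_{3^r}$ (Lemma~\ref{lemm38}), and the optimization of the remaining $H_i$ via Lemma~\ref{lemma39} and the analogue of Problem~\ref{probgl2}. The only structural difference is that you dispatch the case $g=1$ separately using the internal direct product $\GL_3(\F_q)\cong\SL_3(\F_q)\times Z$, whereas the paper folds $g=1$ into the same sequence of lemmas; your treatment is arguably cleaner there, and your explicit flagging of the ``main obstacle'' (that pinning down $H_1\cap\SL_3(\F_q)$ genuinely requires the subgroup classification of \cite{sln}) is more candid than the paper's brief argument in Lemma~\ref{lemm36}.
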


We use notation as per Lemma \ref{lemma35}, so we  let $\mathcal C = \{H_1, \dots, H_\ell\}$ be a minimal faithful collection of $\GL_3(\F_q)$ and let $H_1$ be such that  $H_1 \cap \slq$ doesn't contain the element of  $Z(\slq)$ of order $3$ if $3 \mid q-1$. Note, such $H_1$ is unique by Lemma \ref{lemma35}.

\begin{lemm}\label{lemm36}  
The subgroup
$
H_1 \cap \SL_3(\F_q)  
$
is a conjugate of $G_{3}$ in $\SL_{3}(\F_q)$, when $g = 3$ i.e. $3| q-1$.
\\ In the case of  $g = 1$ we have that 
$
H_1 \cap \SL_3(\F_q)  
$
is a conjugate of the subgroup $E_q^2:\GL_2(\F_q)$ of class $\mathscr{C}_1$.
\end{lemm}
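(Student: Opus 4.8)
The plan is to mimic the structure of Lemma~\ref{lem5.5}, using the classification of subgroups of $\SL_3(\F_q)$ recalled before Theorem~\ref{sl3} together with Lemma~\ref{lemm32}. Since $|H_1|$ is coprime to $3$, the subgroup $K := H_1 \cap \SL_3(\F_q)$ is a core-free subgroup of $\SL_3(\F_q)$ whose order is coprime to $3$ (note $\det$ maps $H_1$ onto $\F_q^\times$ by Lemma~\ref{lemma35}, so $[\GL_3(\F_q):H_1] = [\SL_3(\F_q):K]$ exactly, by the computation in the proof of Lemma~\ref{lemm32}). The first step is therefore to go down the list of maximal subgroups of $\SL_3(\F_q)$ from Table $8.3$ of \cite{sln} and, for each one, pass to the largest subgroup of order coprime to $3$; by Theorem~\ref{sl3} (its proof) the champion is $G_3'$ when $\frac{q-1}{3}<(q-1)_3$ and, in the other sub-case, the relevant object with trivial intersection with the central $3$-subgroup is $G_3$. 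When $g=1$ there is no constraint coming from order, and the champion is simply the maximal subgroup $M = E_q^2{:}\GL_2(q)$ of class $\mathscr{C}_1$.

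The second step is to convert ``largest core-free subgroup of the right type'' into ``$K$ must be this subgroup, up to conjugacy''. Here I would argue exactly as in Lemma~\ref{lem5.5}: if $K$ were properly contained in $G_3'$ (resp.\ $G_3$, resp.\ $M$), then $[\SL_3(\F_q):K] \ge 2\, p(\SL_3(\F_q))$ would be forced only if $K$ sat inside a \emph{different} maximal subgroup; the point is rather that any core-free subgroup of $\SL_3(\F_q)$ of order coprime to $3$ is contained in one of the maximal subgroups on the list, and for each such maximal subgroup the coprime-to-$3$ part has index at least that of $G_3'$ (resp.\ $G_3$), with equality characterizing the subgroup in question. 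Then $[\GL_3(\F_q):H_1] = [\SL_3(\F_q):K]$, and if this index strictly exceeds $p(\SL_3(\F_q))$ one gets $[\GL_3(\F_q):H_1] + (\text{contributions of }H_2,\dots,H_\ell) > p(\SL_3(\F_q)) + T_{n,q}(q-1) \ge p(\GL_3(\F_q))$ by Corollaries~\ref{coro34} and~\ref{coro25}, contradicting minimality of $\mathcal C$; hence $K$ achieves the minimal index and must be (conjugate to) $G_3'$, $G_3$, or $E_q^2{:}\GL_2(q)$ accordingly.

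For the $g=1$ case the argument is cleaner: $M = E_q^2{:}\GL_2(q)$ has trivial core (Theorem~\ref{sl3}(1)) and index $\frac{q^3-1}{q-1} = p(\SL_3(\F_q))$, and any core-free $K$ with $[\SL_3(\F_q):K] > p(\SL_3(\F_q))$ again violates minimality of $\mathcal C$ via the same two corollaries; so $K$ is conjugate to $M$.

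The main obstacle I anticipate is the inequality bookkeeping in Step~1: one must verify that for \emph{every} maximal subgroup of $\SL_3(\F_q)$ (geometric classes $\mathscr{C}_1,\mathscr{C}_2,\mathscr{C}_3,\mathscr{C}_5,\mathscr{C}_6,\mathscr{C}_8$ and class $\mathscr{S}$), the largest subgroup of order coprime to $3$ (equivalently, with trivial central $3$-intersection in the relevant sub-case) has order $\le |G_3'|$ (resp.\ $\le |G_3|$), and to handle the $\mathscr{C}_1$ parabolic carefully since that is where the extremal subgroup lives — one has to see that inside $M$ the coprime-to-$3$ part is obtained by replacing the $\GL_2(q)$ Levi factor by its index-$(q-1)_3$ subgroup, exactly as in the proof of Theorem~\ref{slnmainres} Case~2 with $n-1 = 2$. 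Once the order comparison from the proof of Theorem~\ref{sl3} is invoked, the rest is the structural ``largest implies equal'' step, which is routine given the framework already set up.
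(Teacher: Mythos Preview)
Your proposal follows essentially the same route as the paper, but the paper's actual proof is much terser than what you plan. The paper's argument is four sentences: from Lemma~\ref{lemma35} one has $H_1 \cdot \SL_3(\F_q) = \GL_3(\F_q)$, hence $[\GL_3(\F_q):H_1] = [\SL_3(\F_q): H_1 \cap \SL_3(\F_q)]$; since $\mathcal{C}$ is minimal this index must be as small as possible, so $H_1 \cap \SL_3(\F_q)$ is the maximal core-free subgroup of $\SL_3(\F_q)$, which was already identified in Theorem~\ref{sl3} as $G_3'$, $G_3$, or $E_q^2{:}\GL_2(q)$ in the respective cases. The paper does \emph{not} revisit the Table~8.3 classification or redo any order comparisons here; all of that is packaged into Theorem~\ref{sl3} and simply cited. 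So your Step~1 bookkeeping, while not wrong, is redundant.

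One genuine soft spot in your Step~2: the implication ``$[\SL_3(\F_q):K]$ strictly exceeds $p(\SL_3(\F_q))$ forces the total to exceed $p(\SL_3(\F_q)) + T_{n,q}(q-1)$'' does not follow from Corollaries~\ref{coro34} and~\ref{coro25} alone, since those give no lower bound on the contributions of $H_2,\dots,H_\ell$. In the $n=2$ analogue (Lemma~\ref{lem5.5}) the paper actually establishes the stronger bound $[\GL_2(\F_q):H_1] \ge 2\,p(\SL_2(\F_q))$ for every non-optimal choice, and then Corollary~\ref{coro34} closes the gap. You would need the analogous factor-of-two (or better) statement here. The paper's proof of Lemma~\ref{lemm36} sidesteps this entirely with the bare phrase ``we want the index of $H_1$ to be minimized'', which is admittedly less rigorous than your attempted contradiction but is the argument as written.
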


\begin{proof}
$$
|H_1 \cdot \SL_{3}(\F_q)| = \frac{|H_1| \cdot |\SL_{3}(\F_q)|}{|H_1 \cap \SL_{3}(\F_q)|} = |\GL_{3}(\F_q)|.  
$$
We want the index of $H_1$ in $\GL_{3}(\F_q)$ i.e. $\frac{|\GL_{3}(\F_q)|}{|H_1|}$ to be minimized i.e. $
\frac{|\SL_{3}(\F_q)|}{|H_1 \cap \SL_{3}(\F_q)|} 
$ to be minimized. $H_1 \cap \SL_{{3}}(\F_q)$ is then the maximal core free subgroup of $\SL_3$, which we already showed is given by $G_{3}$.
Similarly, in the case of $g = 1$ we have that $H_1 \cap \SL_{{3}}(\F_q)$ is the maximal core free subgroup of $\SL_3$, given by $E_q^2:\GL_2(\F_q)$.
\end{proof}
Without loss of generality we may assume $H_1 \cap \SL_3(\F_q)= G_{3}$ for $g =3$ and  $H_1 \cap \SL_3(\F_q)= E_q^2:\GL_2(\F_q) $ for $g = 1$.  

\

For $n$, $0 \leq n < q-1$, define a subgroup  $D(n) \subset \glq$ by 
$$
D(n) = \{ \begin{pmatrix} a^{n+1} \\ & a^{1} \\ & & a^{-n} \end{pmatrix} \mid a \in \F_q^\times \}. 
$$
Set 
$$
GH(n) = D(n) \cdot G_{3},
$$ 
when $g = 3$ and 
$$
GH(n) = D(n) \cdot E_q^2:\GL_2(\F_q) 
$$ when $g = 1$.
\begin{lemm}\label{lemm37}
There is an $n$, $0 \leq n < q-1$, such that $H_1 = GH(n)$. 
\end{lemm}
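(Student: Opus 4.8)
The plan is to run the proof of Lemma~\ref{lemm5.6} in the $\GL_3$ setting; it has two parts. The first is to show that $H_1$ is block upper triangular, i.e.\ $H_1\subseteq P\cdot A$ where $P\cdot A$ denotes the $(2,1)$-parabolic of $\GL_3(\F_q)$. Recall that by the normalization made just before the lemma $H_1\cap\SL_3(\F_q)$ equals $G_3$, $G_3'$, or $E_q^2{:}\GL_2(\F_q)$; in each case it lies in $P$ and contains the full unipotent radical $U=\{\begin{psmallmatrix} I_2 & v\\ 0 & 1\end{psmallmatrix}\mid v\in\F_q^2\}$ of $P$ (the upper-right block is unconstrained, and $|U|=q^2$ is prime to $3$). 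Since $\SL_3(\F_q)$ is normal in $\GL_3(\F_q)$, the subgroup $H_1\cap\SL_3(\F_q)$ is normal in $H_1$. Writing a general element of $H_1$ in block form $g=\begin{psmallmatrix} C & x\\ y & d\end{psmallmatrix}$ (so $y$ is a row of length $2$), the bottom-left block of $g\,u\,g^{-1}$ for $u\in U$ is a scalar multiple of the bottom-left block of $g^{-1}$; imposing $g\,u\,g^{-1}\in H_1\cap\SL_3(\F_q)$ for all $u\in U$ — legitimate since $H_1\cap\SL_3(\F_q)$ is normal in $H_1$ and block upper triangular — forces $y=0$. This is the $\GL_3$ version of the Bezrukavnikov--Pilkington argument, with $y$ now a $1\times2$ block rather than a scalar.

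For the second part, combining Lemmas~\ref{lemm31}, \ref{lemm32}, \ref{lemma35} and \ref{lemm36} exactly as in the $\GL_2$ case gives $[\GL_3(\F_q):H_1]=[\SL_3(\F_q):H_1\cap\SL_3(\F_q)]=p(\SL_3(\F_q))$, hence $[H_1:H_1\cap\SL_3(\F_q)]=q-1$, so I must exhibit $q-1$ coset representatives lying inside a single $D(n)$. By Lemma~\ref{lemma35} the determinant maps $H_1$ onto $\F_q^\times$, so there is $g_0\in H_1$ — block upper triangular by the first part — with $\det g_0=\varpi$. Since $H_1\cap\SL_3(\F_q)$ contains every strictly upper triangular unipotent matrix of $\GL_3(\F_q)$ (namely $U$ together with the upper unipotent subgroup of the $\GL_2$-block), I can multiply $g_0$ on the right by elements of $H_1\cap\SL_3(\F_q)$ — first to diagonalize the $\GL_2$-block, then to clear the upper-right block — and replace it by a diagonal $X\in H_1$ with $\det X=\varpi$. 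As $X^i\in\SL_3(\F_q)$ iff $(q-1)\mid i$, the element $X$ has order $q-1$ modulo $H_1\cap\SL_3(\F_q)$, so $H_1=\langle X\rangle\cdot(H_1\cap\SL_3(\F_q))$.

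It then remains to match this group with $GH(n)=D(n)\cdot(H_1\cap\SL_3(\F_q))$ for a suitable $n$. Let $\delta_n$ generate the cyclic group $D(n)$; since $\det\delta_n=\varpi=\det X$, the diagonal matrix $X\delta_n^{-1}$ lies in $\SL_3(\F_q)$ for every $n$, and I would verify — from the explicit description of the diagonal part of $H_1\cap\SL_3(\F_q)$ (the torus part of $G_3$, $G_3'$, or $E_q^2{:}\GL_2(\F_q)$ as read off from Theorem~\ref{sl3}) — that $n$ can be chosen so that $X\delta_n^{-1}\in H_1\cap\SL_3(\F_q)$; this reduces to a congruence computation in $\Z/(q-1)$. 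For such $n$ one has $\delta_n=(X\delta_n^{-1})^{-1}X\in H_1$, so $D(n)\subseteq H_1$ and hence $GH(n)\subseteq H_1$; and since $D(n)$ meets $\SL_3(\F_q)$ trivially, $[GH(n):H_1\cap\SL_3(\F_q)]=q-1=[H_1:H_1\cap\SL_3(\F_q)]$, which forces $H_1=GH(n)$.

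The step I expect to be the main obstacle is this last matching, specifically in case~(2a) of Theorem~\ref{sl3} (where $\tfrac{q-1}{3}<(q-1)_3$ and $H_1\cap\SL_3(\F_q)$ is a conjugate of $G_3'$): there the diagonal part of $H_1\cap\SL_3(\F_q)$ is comparatively small, so producing $n$ with $X\delta_n^{-1}\in H_1\cap\SL_3(\F_q)$ requires exploiting the precise structure of $G_3'$ — and, presumably, the constraint on $Z\cap H_1$ coming from minimality of the faithful collection $\mathcal C$ — rather than the clean $\SL_2(\F_q)$-reduction that makes the cases $G_3$ and $E_q^2{:}\GL_2(\F_q)$ immediate. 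Once a diagonal $X$ with $\det X=\varpi$ is in hand and the matching $n$ is found, the remainder is routine and parallel to the $\GL_2$ computation.
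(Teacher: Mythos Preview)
Your two-step plan is exactly the paper's: first show $H_1$ lies in the $(2,1)$-parabolic by conjugating unipotent elements of the normal subgroup $H_1\cap\SL_3(\F_q)$, then produce a diagonal $X\in H_1$ with $\det X=\varpi$ and deduce $H_1=\langle X\rangle\cdot(H_1\cap\SL_3(\F_q))$. The paper carries out step one by writing out two explicit conjugates $g^{-1}ug$ and reading off that the $(3,1)$ and $(3,2)$ entries must vanish; your block formulation is the same computation packaged more cleanly.

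Two small caveats are worth recording. First, upper-triangular unipotent matrices alone cannot kill a nonzero $(2,1)$ entry in the $\GL_2$-block of $g_0$: left or right multiplication by such matrices leaves that entry untouched. The reduction to a diagonal $X$ therefore really needs the copy of $\SL_2$ sitting in the Levi of $H_1\cap\SL_3(\F_q)$; this is available in the $P$ and $G_3$ cases and, as you correctly flag, more delicate for $G_3'$. The paper makes the identical slip. Second, with the paper's stated definition $D(n)=\{\mathrm{diag}(a^{n+1},a,a^{-n})\}$ one has $\det\delta_n=\varpi^{2}\neq\varpi$, and $D(n)\cap\SL_3(\F_q)$ contains the image of $a=-1$, so it is not trivial; indeed the paper's own proof writes $X_n=\mathrm{diag}(\varpi^{n+1},\varpi^{-n-1},\varpi)$, which does not lie in its declared $D(n)$. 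Your matching step should therefore target whatever the corrected $D(n)$ turns out to be. Modulo these wrinkles --- which originate in the paper rather than in your outline --- your argument and the paper's coincide, and your explicit isolation of the congruence-matching step (with the flag on case~(2a)) is if anything an improvement on the paper's compressed treatment.
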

\begin{proof} Observe that $H_1 \cap \SL_3(\F_q)$ is normal in $H_1$. By Lemma \ref{lemm36}, $H_1 \cap \SL_3(\F_q)$ consists of matrices with vanishing  entries in their last row's first column and second column and contains all 
upper triangular unipotent matrices. Suppose $\begin{pmatrix} a & b & c\\d & e & f\\ g & h & i  \end{pmatrix} \in H_1$, and let $\begin{pmatrix} 1 & 0 & x \\0 & 1 & 0\\ 0 & 0 & 1 \end{pmatrix}$ be an arbitrary upper triangular unipotent matrix. Then since the matrix 
$$
\begin{pmatrix} a & b & c\\d & e & f\\ g & h & i  \end{pmatrix}^{-1}\begin{pmatrix} 1 & 0 & x \\0 & 1 & 0\\ 0 & 0 & 1 \end{pmatrix} \begin{pmatrix} a & b & c\\d & e & f\\ g & h & i  \end{pmatrix} 
$$
$$
= \begin{pmatrix} * & *  & *\\ * & * & *\\ xg(dh-eg)/* & xh(dh-eg)/* & * \end{pmatrix}
$$
for all $x$, we must have $dh = eg$ if either of $g$ or $h$ are non-zero.
\\By symmetric considerations letting  $\begin{pmatrix} 1 & x & 0 \\0 & 1 & 0\\ 0 & 0 & 1 \end{pmatrix}$ be an arbitrary upper triangular unipotent matrix we obtain
$$
\begin{pmatrix} a & b & c\\d & e & f\\ g & h & i  \end{pmatrix}^{-1}\begin{pmatrix} 1 & 0 & 0 \\0 & 1 & x\\ 0 & 0 & 1 \end{pmatrix} \begin{pmatrix} a & b & c\\d & e & f\\ g & h & i  \end{pmatrix} 
$$
$$
= \begin{pmatrix} * & *  & *\\ * & * & *\\ xg(bg-ah)/*& xh(bg-ah)/* & * \end{pmatrix}
$$ 
for all $x$.
Hence, $bg = ah$ if either of $d$ or $e$ are non-zero.
In totality we have $dh = eg$ and $bg = ah$ if either of $g$ or $h$ are non-zero. However, in this scenario assuming $g \neq 0$ without loss of generality, $e = \frac{dh}{g}$ and $b = \frac{ah}{g}$, so $ea -bd = \frac{dha}{g} - \frac{dha}{g} = 0$. This implies that the determinant of the original matrix $\begin{pmatrix} a & b & c\\d & e & f\\ g & h & i  \end{pmatrix}$ vanishes.  Contradiction! Hence, both $g$ and $h$ equal $0$.
Now we proceed to identify $H_1$ explicitly. By the proof of Lemma \ref{lemm32} and the statements of Lemmas \ref{lemma35} and \ref{lemm36} we have 
\begin{equation}\label{eq32-2}
[\GL_3(\F_q): H_1] = [\SL_3(\F_q): G_{3}]. 
\end{equation} 
This means $|H_1| = (q-1)\cdot  |G_{3}|$. So we need to find $(q-1)$ representatives for the quotient $H_1/G_{3}$. By Lemma \ref{lemma35}, the determinant $\det: H_1 \to \F_q^\times$ is surjective. In particular, if $\varpi$ is a generator 
of $\F_q^\times$, there is a matrix $X$ in $H_1$, with vanishing  entries in its first column's second row and third row by the first part, 
such that $\det X = \varpi$.  Since by Lemma \ref{lemm36}, $H_1$ contains all upper triangular uni potent matrices, we may assume that $X$ is diagonal.  Since $\varpi$ is a generator of $\F_q^\times$, we may write $X =\begin{pmatrix} \varpi^{n+1} \\ & \varpi^m \\ && \varpi^k \end{pmatrix}$.  
Since $\varpi = \det X = \varpi^{n+m+k+1}$, we conclude $n+m+k \equiv 0 \mod (q-1)$, or $m+k \equiv - n$.  So if we let $X_n = 
\begin{pmatrix} \varpi^{n+1} \\ & \varpi^{-n -1}\\ & & \varpi^{1} \end{pmatrix}$, then $X_n \in H_1$ and $\det X_n = \varpi$. The elements 
$\{X_n^i \mid 0 \leq i < q-1\}$ provide the $(q-1)$ representatives for $H_1/G_{3}$ 
that we need.
\\ Note, when $g=1$ replacing $G_{3}$ by  $E_q^2:\GL_2(\F_q)$ and applying the same reasoning as above gives us the desired conclusion. \end{proof}
\begin{coro}[From the proof] 
{We have 
$$
[\GL_{3}(\F_q):H_1] = (q-1)\cdot p(\SL_{3}(\F_q)). 
$$}

\end{coro}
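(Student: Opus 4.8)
\emph{Overview of the plan.} Because the corollary is recorded ``from the proof,'' I would not start afresh: the aim is to read the index off the data already assembled for Lemma \ref{lemm37}, with no new construction. The one structural input is the identification, from Lemma \ref{lemm36}, of $H_1 \cap \SL_3(\F_q)$ up to conjugacy as the maximal core-free subgroup of $\SL_3(\F_q)$ — namely $G_3$ or $G_3'$ when $g = 3$, according as $\frac{q-1}{3} > (q-1)_3$ or $\frac{q-1}{3} < (q-1)_3$, and $E_q^2 : \GL_2(\F_q)$ when $g = 1$. In each case Theorem \ref{sl3} tells us that the index of this subgroup in $\SL_3(\F_q)$ is exactly $p(\SL_3(\F_q))$, so I would begin by pinning down $[\SL_3(\F_q) : H_1 \cap \SL_3(\F_q)] = p(\SL_3(\F_q))$.

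\emph{The computation.} I would then pass to $\GL_3(\F_q)$ by running the tower $H_1 \cap \SL_3(\F_q) \subseteq \SL_3(\F_q) \subseteq \GL_3(\F_q)$ and using multiplicativity of indices together with $[\GL_3(\F_q) : \SL_3(\F_q)] = q-1$:
$$
[\GL_3(\F_q) : H_1 \cap \SL_3(\F_q)] = [\GL_3(\F_q) : \SL_3(\F_q)] \cdot [\SL_3(\F_q) : H_1 \cap \SL_3(\F_q)] = (q-1)\cdot p(\SL_3(\F_q)).
$$
Matching this against the order relation $|H_1| = (q-1)\,|H_1 \cap \SL_3(\F_q)|$ recorded in the proof of Lemma \ref{lemm37} — which comes from the surjectivity of $\det \colon H_1 \to \F_q^\times$ supplied by Lemma \ref{lemma35} — identifies the index appearing in the corollary and yields
$$
[\GL_3(\F_q) : H_1] = (q-1)\cdot p(\SL_3(\F_q)).
$$
The three possibilities for $H_1 \cap \SL_3(\F_q)$ are handled uniformly, so the case distinction amounts only to recording which subgroup plays the role of the core-free intersection.

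\emph{Main obstacle.} The delicate point, and the one I would watch most carefully, is the accounting of the factor $q-1$ that separates the $\SL_3(\F_q)$ and $\GL_3(\F_q)$ indices. Since both $H_1$ and its subgroup $H_1 \cap \SL_3(\F_q)$ acquire the same determinant quotient on passing from $\SL_3(\F_q)$ to $\GL_3(\F_q)$, I would insert the relation $|H_1| = (q-1)\,|H_1 \cap \SL_3(\F_q)|$ from Lemma \ref{lemm37} at precisely the right place and reconcile it with the tower computation above, so that the factor $q-1$ is carried into the final expression. With that bookkeeping in hand the identity $[\GL_3(\F_q):H_1] = (q-1)\cdot p(\SL_3(\F_q))$ drops out at once, which is exactly why it is recorded as a corollary of the preceding proof.
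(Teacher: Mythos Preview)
Your approach --- reading the index off the data assembled in Lemma~\ref{lemm37} via a tower of subgroups --- is exactly what the paper does, and your identification $[\SL_3(\F_q): H_1 \cap \SL_3(\F_q)] = p(\SL_3(\F_q))$ is correct. But the bookkeeping in your final step is inverted, and the point you flag as the ``main obstacle'' is precisely where it goes wrong. You correctly compute
\[
[\GL_3(\F_q): H_1 \cap \SL_3(\F_q)] = (q-1)\cdot p(\SL_3(\F_q)),
\]
and you correctly record from Lemma~\ref{lemm37} that $|H_1| = (q-1)\,|H_1 \cap \SL_3(\F_q)|$, i.e.\ $[H_1 : H_1 \cap \SL_3(\F_q)] = q-1$. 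But these two facts combine via the tower $H_1 \cap \SL_3(\F_q) \subset H_1 \subset \GL_3(\F_q)$ to give
\[
[\GL_3(\F_q):H_1] \;=\; \frac{[\GL_3(\F_q): H_1 \cap \SL_3(\F_q)]}{[H_1: H_1 \cap \SL_3(\F_q)]} \;=\; \frac{(q-1)\cdot p(\SL_3(\F_q))}{q-1} \;=\; p(\SL_3(\F_q)),
\]
so the factor $q-1$ cancels rather than survives.

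This is exactly what equations~\eqref{eq32-2} and~\eqref{eq32-3} in the paper record: $[\GL_3(\F_q):H_1] = [\SL_3(\F_q):G_3]$ (resp.\ $H_3$), with no extra factor. It is also parallel to the $\GL_2$ corollary, where $[\GL_2(\F_q):H_1] = p(\SL_2(\F_q))$, and it is what Theorem~\ref{theo314} requires: the contribution of $H_1$ to $p(\GL_3(\F_q))$ must be $p(\SL_3(\F_q))$, not $(q-1)\cdot p(\SL_3(\F_q))$. The $(q-1)$ in the stated corollary is a typo in the paper; your own argument, carried through correctly, would have exposed it rather than confirmed it.
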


\begin{lemm}\label{lemm38}  
{$$
\core_{\GL_{3}(\F_q)}(H_1) = Z_{3^r}. 
$$} where $r$ is the highest power of $3$ dividing $q -1$.
\end{lemm}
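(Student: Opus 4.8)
The plan is to follow the template of Lemma \ref{lemm5.8}: reduce the core to an intersection with the center of $\GL_3(\F_q)$, and then compute that intersection in exponent coordinates.

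\emph{Reduction to $Z\cap H_1$.} First I would note that $\core_{\GL_3(\F_q)}(H_1)\cap\SL_3(\F_q)$ is an intersection of normal subgroups of $\GL_3(\F_q)$, hence normal in $\GL_3(\F_q)$ and a fortiori in $\SL_3(\F_q)$, and is contained in $H_1\cap\SL_3(\F_q)$, which by Lemma \ref{lemm36} is a conjugate of $G_3$, $G_3'$, or $E_q^2:\GL_2(\F_q)$ — in each case a core-free subgroup of $\SL_3(\F_q)$ by Theorem \ref{sl3}. So $\core_{\GL_3(\F_q)}(H_1)\cap\SL_3(\F_q)=\{e\}$. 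A normal subgroup $N$ of $\GL_3(\F_q)$ meeting $\SL_3(\F_q)$ trivially is central: for $n\in N$ and $g\in\GL_3(\F_q)$ the commutator $[n,g]$ lies in $N$ by normality and in $\SL_3(\F_q)$ because $\GL_3(\F_q)/\SL_3(\F_q)$ is abelian, hence in $N\cap\SL_3(\F_q)=\{e\}$. Thus $\core_{\GL_3(\F_q)}(H_1)\subseteq Z$, and since every subgroup of $Z$ is normal, $\core_{\GL_3(\F_q)}(H_1)=Z\cap H_1$. It then remains to show $Z\cap GH(n)=Z_{3^r}$ for all $n$ with $0\le n<q-1$, where $3^r=(q-1)_3$.

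\emph{The central computation.} By Lemma \ref{lemm37}, $GH(n)=D(n)\cdot K$, where $D(n)=\{\mathrm{diag}(a^{n+1},a^{-n-1},a)\mid a\in\F_q^\times\}$ (the torus of that proof, normalized so that $\det$ is surjective) and $K\in\{G_3,\,G_3',\,E_q^2:\GL_2(\F_q)\}$. I would take an element $\lambda I=d\,g$ with $d=\mathrm{diag}(a^{n+1},a^{-n-1},a)\in D(n)$ and $g\in K$. Since $d$ is diagonal, $g=d^{-1}\lambda I=\mathrm{diag}(\lambda a^{-n-1},\lambda a^{n+1},\lambda a^{-1})$ is forced to be diagonal; writing $g=\begin{psmallmatrix}B&0\\0&b\end{psmallmatrix}$ with $B=\mathrm{diag}(\lambda a^{-n-1},\lambda a^{n+1})$, membership $g\in K$ amounts to (a) $\det g=1$, i.e. $\lambda^3 a^{-1}=1$, so $a=\lambda^3$ (which also makes $b=(\det B)^{-1}$ automatic), and (b) $B$ lying in the Levi part of $K$. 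The crucial observation is that $\det B=\lambda a^{-n-1}\cdot\lambda a^{n+1}=\lambda^2$ is independent of $n$ and $a$; substituting $a=\lambda^3$, condition (b) in each of the three cases collapses to a condition on $\lambda$ alone, which I expect to come out to $\lambda\in A_{(q-1)_3}$ in all three cases (with no condition at all in the $E_q^2:\GL_2(\F_q)$ case, where $3\nmid q-1$ and $A_{(q-1)_3}=A_1=\F_q^\times$). Conversely, for every such $\lambda$, taking $a=\lambda^3$ produces an element $\lambda I\in Z\cap GH(n)$. Hence $Z\cap GH(n)=\{\lambda I\mid\lambda\in A_{(q-1)_3}\}=Z_{3^r}$; when $3\nmid q-1$ this correctly reads $Z\cap GH(n)=Z$.

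\emph{Where the difficulty lies.} The reduction and the cancellation $\det B=\lambda^2$ are routine. The real work is step (b): checking uniformly over the three possible shapes of $K$ — in particular the $\frac{q-1}{3}<(q-1)_3$ case, where the relevant Levi piece of $G_3'$ is cut out by a coprimality-to-$3$ condition rather than by a clean determinant condition — that the resulting constraint on $\lambda$ is exactly $\lambda\in A_{(q-1)_3}$ and carries no residual dependence on $n$. This mild case analysis is essentially all that separates the plan from a complete proof, and the overall argument should be shorter than the $\GL_2$ analogue of Lemma \ref{lemm5.8}, since here the $3$-power twist is carried inside the $\GL_2$-block of $G_3$ rather than by a torus-times-unipotent subgroup, so no delicate $\gcd$ manipulation is needed.
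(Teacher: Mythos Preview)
Your approach is genuinely different from the paper's and, where it is complete, cleaner. The paper follows the template of Lemma~\ref{lemm5.8} literally: it writes a putative central element as $\mathrm{diag}(a^{n+1},a^{-n-1},a)\cdot\mathrm{diag}(b^{-2},b,b)$ with $b\in A_{3^r}$, extracts the two scalar conditions $a^{2n+2}=b^3$ and $a^n=b^3$ in exponent coordinates, and then carries out a $\gcd$ computation in the style of the $\GL_2$ argument to force the common diagonal entry into $A_{3^r}$. Your determinant argument short-circuits all of this: the relation $\det g=1$ pins down $a=\lambda^3$ immediately, and the observation that $\det B=\lambda^2$ is independent of $n$ reduces the question (for $K=G_3$ or $K=E_q^2{:}\GL_2(q)$) to whether $\lambda^2\in A_{3^r}$, which is equivalent to $\lambda\in A_{3^r}$ since $2$ is a unit modulo any power of $3$. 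Your reduction $\core(H_1)=Z\cap H_1$ via the commutator argument is also more explicit than the paper's bare invocation of Lemma~\ref{lemm37}.

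Your caveat about the $G_3'$ case is honest, but note that the paper does not handle it separately either: its proof silently assumes the diagonal part of $K$ consists of matrices $\mathrm{diag}(b^{-2},b,b)$ with $b\in A_{3^r}$, without checking that this exhausts the diagonal elements of $K$ in each of the three regimes. So the gap you flag is shared with the paper's own argument; your version has the advantage of isolating precisely what remains to be verified, namely that membership of the diagonal matrix $\mathrm{diag}(\lambda^{-3n-2},\lambda^{3n+4})$ in the Levi part of $G_3'$ depends only on $\lambda$ modulo $A_{3^r}$.
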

\begin{proof}
By Lemma \ref{lemm37}, it suffices to prove $\core_{\glq}(GH(n)) = Z_{3^r}$ for each $n$, and that means we need to determine $Z \cap GH(n)$.  Suppose we have an element of the form 
$$
t = \begin{pmatrix} a^{n+1} \\ & a^{-n -1} \\ & & a^{1} \end{pmatrix} \cdot \begin{pmatrix} b^{-2} \\ & b \\ & & b \end{pmatrix}, \quad a \in \F_q^\times, b \in A_{3^r}
$$
and suppose $t \in Z$. This means $a^{2n+2} = b^3$ and $a^{n} = b^3 $. 
 Write $a = \varpi^i, b = \varpi^{3^r \cdot j}$. Then we have 
$$
 (2n+2)\cdot i \equiv n\cdot i  \equiv3^{r+1} j \mod (q-1). 
$$
Let $g = \gcd(2n+2, q-1)$. Then $j = gu$ for some $u$.  Then if $k$ is a multiplicative inverse of $(2n+2)/g$ modulo $(q-1)/g$, we have 
$$
i \equiv k \cdot 3^{r+1} \cdot u \mod \frac{q-1}{g}, 
$$
or 
$$
i =  k \cdot 3^{r+1} \cdot u +  \frac{q-1}{g} s 
$$
for some $s$.  So if for any $u, s$ we set $a = \varpi^i, b = \varpi^{3^r \cdot j}$ with $i, j$ satisfying 
$$
\begin{cases}
i = k \cdot 3^{r+1} \cdot u +  \frac{q-1}{g} s  \\
j = u g, 
\end{cases}
$$
and $q-1| (n+2)\cdot i$
then $a^{2n+2} = a^{n} = b^{3}$. Now we examine the matrix $t$. We see that $a^{-n-1}\cdot b$ is equal to $\varpi$ raised to the power 
$$
(-n-1)(k \cdot 3^{r+1} \cdot u +  \frac{q-1}{g} s ) + u \cdot g \cdot 3^{r}
$$
$$
= ({3\cdot(-n-1)\cdot k \cdot u} + u\cdot g)3^r+ \frac{q-1}{g} \cdot  s 
$$
$$
= 3^r \cdot \big\{  (3 \ (-n-1)\cdot {k} + g) \cdot u + \frac{q-1}{3^r \cdot g} \cdot  s  \big\}. 
$$
We will show that 
\begin{equation}\label{eq32-4}
\gcd( 3\cdot(-n-1) \cdot k + g, \frac{q-1}{g}  ) = 1. 
\end{equation}
We have 
$$
 \gcd(3 \cdot(-n-1)\cdot k + g, \frac{q-1}{g})
$$
$$
= \gcd(2\cdot (-n-1)\cdot k + g +(-n-1)\cdot k, \frac{q-1}{g}) 
$$
$$
= \gcd(g \left\{\frac{2\cdot (-n-1) \cdot k}{g}+1 \right\} + (-n-1) \cdot k, \frac{q-1}{g})
$$
$$
= \gcd((-n-1)\cdot k, \frac{q-1}{g})
$$
$$
=1. 
$$
In the above computation we have used the fact that $k$ is multiplicative inverse of $(2n+2)/g$ modulo $(q-1)/g$, so $2\cdot (-n-1) \cdot k/g+ 1$ is divisible by $(q-1)/g$. Now that we have established Equation \eqref{eq32} we observe that
$$
 \gcd ( 3\cdot (-n-1) \cdot k + g , \cdot \frac{q-1}{3^r \cdot g} )=1. 
$$
This means that there are integers $s, u$ such that the corresponding $a, b$ satisfy $a^{n+1} b^{-2} = a^{-n-1} b^{1}= ab = \varpi^{3^r}$, and that whenever $a^{n+1} b^{-2} = a^{-n-1} b^{1}= ab$ for $a \in \F_q^\times, b \in A_{3^r}$, then the common value is of the form $\varpi^{ f \cdot 3^r}$ for some integer $f$. This finishes the proof of the lemma. 
\end{proof}

{Our goal is to minimize 
$$
[\GL_{3}(\F_q): H_2] + \dots + [\GL_{3}(\F_q): H_\ell]. 
$$}

We use the following corollary to Lemma \ref{lemma39} 
\begin{coro}
Suppose $t|q-1$ , and $t \ne q-1$ , then in the case of $n = 3$, we have $g = 1$ or $3$. So, the subgroup $H(t)$ of $\GL_3(\F_q)$ with minimal $[\GL_3(\F_q): H]$ among the subgroups that satisfy $Z \cap H = Z_t$ is given by
$$
H(t) = \begin{cases}
\GL_3(\F_q)^t & 3 \nmid t; \\
\GL_3(\F_q)^{3t} & 3|t. 
\end{cases}
$$
Furthermore, 
$$
[\GL_3(\F_q): H(t)] = \begin{cases}
t & 3 \nmid t; \\
3t& 3|t. 
\end{cases}
$$
\end{coro}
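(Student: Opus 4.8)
The plan is to obtain this as a direct specialization of Lemma~\ref{lemma39} to $n = 3$, exactly as Corollary~\ref{lemm39} does for $\GL_2(\F_q)$. First recall that $g = \gcd(3, q-1)$ is $1$ when $3 \nmid q-1$ and is $3$ when $3 \mid q-1$; in particular, since $t \mid q-1$, the sub-case $3 \mid t$ can occur only when $g = 3$. By Lemma~\ref{lemma39} a subgroup $H$ of minimal index subject to $Z \cap H = Z_t$ may be taken to contain $\SL_3(\F_q)$, hence by Lemma~\ref{subgrpscontainsln} to have the form $\GL_3(\F_q)^{t'}$ for some $t' \mid q-1$, and by Lemma~\ref{core} the condition $Z \cap \GL_3(\F_q)^{t'} = Z_t$ is equivalent to $t'/\gcd(3,t') = t$, i.e.\ to $t' = dt$ with $d \mid 3$ and $\gcd(3, dt) = d$. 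Since $[\GL_3(\F_q):\GL_3(\F_q)^{t'}] = t'$ by~\eqref{indext}, minimizing the index amounts to choosing the smallest such $d$.

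Next I would run the two cases. If $3 \nmid t$, then $d = 1$ is admissible because $\gcd(3,t)=1$, and being the smallest divisor of $3$ it yields $H(t) = \GL_3(\F_q)^t$ with $[\GL_3(\F_q):H(t)] = t$. If $3 \mid t$, then $d = 1$ fails, since $\GL_3(\F_q)^t \cap Z = Z_{t/3} \neq Z_t$; the next choice $d = 3$ works because $\gcd(3, 3t) = 3$, giving $H(t) = \GL_3(\F_q)^{3t}$ with $[\GL_3(\F_q):H(t)] = 3t$. This is precisely the stated dichotomy.

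The one point deserving care is the sub-case $3 \mid t$: one should record that $3t$ still divides $q-1$ — equivalently, that the exact power of $3$ dividing $t$ is strictly smaller than that dividing $q-1$ — so that $\GL_3(\F_q)^{3t}$ is a bona fide subgroup of index $3t$ rather than of index $\gcd(3t, q-1)$. In the later use of this corollary the relevant $t$ is always a prime power coprime to $3$, since the full $3$-part of $q-1$ has already been absorbed into $\core_{\GL_3(\F_q)}(H_1) = Z_{3^r}$ (Lemma~\ref{lemm38}), so that sub-case is vacuous there; beyond this bookkeeping the corollary is a routine consequence of Lemmas~\ref{core}, \ref{subgrpscontainsln}, and~\ref{lemma39}, with no genuine obstacle.
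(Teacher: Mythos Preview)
Your proposal is correct and takes essentially the same approach as the paper: the corollary is stated there without proof, simply as a specialization of Lemma~\ref{lemma39} to $n=3$ (just as Corollary~\ref{lemm39} does for $n=2$), and your case analysis on whether $3 \mid t$ unpacks exactly what that specialization entails. Your remark that the sub-case $3\mid t$ requires $3t \mid q-1$, and is in any event vacuous in the subsequent application, is a useful piece of bookkeeping that the paper leaves implicit.
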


To finish the proof of Theorem \ref{theo313} we have to solve the following optimization problem for $n=q-1$.   
\begin{prob}\label{prob}  
Suppose a natural number $n = 3^r m$ with $m$ coprime to $3$ is given. For a natural number $t$, set $\epsilon (t) = (4 + 2(-1)^t)/2$.  Find natural numbers $\ell, t_2, \dots, t_\ell$ such that 
\begin{itemize}
\item $\lcm(3^r, t_2, \dots, t_\ell) = n$;
\item $\sum_{i=2}^r \epsilon(t_i) t_i$ is minimal.  
\end{itemize}
\end{prob}
We call $(t_2, \dots, t_\ell)$ the {\em optimal choice} for $n$. 
\begin{lemm}
Write $n = 3^r p_1^{e_1} \cdots p_k^{e_k}$ with $p_i$'s distinct odd primes. Then  the optimal choice for $n$ is $(p_1^{e_1}, \dots, p_k^{e_k})$. 
\end{lemm}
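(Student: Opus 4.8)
The plan is to mimic the argument of the analogous lemma for $\GL_2$ (Problem~\ref{probgl2} and its resolution), adapting the combinatorics to the weight function $\epsilon(t) = (4+2(-1)^t)/2$, which equals $3$ when $t$ is even and $1$ when $t$ is odd. So the ``cost'' of using a divisor $t$ of $n$ in the $\lcm$ representation is $3t$ if $3\mid t$ and $t$ otherwise (via the preceding corollary), and we wish to minimize $\sum_i \epsilon(t_i)t_i$ subject to $\lcm(3^r, t_2,\dots,t_\ell) = n$.

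First I would argue that in an optimal choice every $t_i$ may be taken coprime to $3$. Indeed $3^r$ already appears in the $\lcm$, so if some $t_i = 3^c s$ with $c\geq 1$ and $3\nmid s$, then replacing $t_i$ by $s$ leaves $\lcm(3^r,t_2,\dots,t_\ell)$ unchanged (since $3^r$ contributes the full power of $3$ needed), and strictly decreases the cost: $\epsilon(3^c s)\,3^c s = 3\cdot 3^c s > s \geq \epsilon(s)\,s$. Once all $t_i$ are coprime to $3$, the weight is simply $\epsilon(t_i)t_i = t_i$, so the problem reduces exactly to the $\GL_2$-type optimization: minimize $\sum_i t_i$ over collections of divisors of $m = p_1^{e_1}\cdots p_k^{e_k}$ (all automatically coprime to $3$ since the $p_i$ are odd primes $\neq 3$... here one should note the $p_i$ may include $2$, which is fine as $2$ is coprime to $3$) whose $\lcm$ is $m$.

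Next I would run the same two reduction steps as in the $\GL_2$ proof. By Lemma~\ref{lemm33}, writing any $t_i$ as a product $\pi_1^{m_1}\cdots\pi_v^{m_v}$ of prime powers, we have $\sum_j \pi_j^{m_j} \leq t_i$ with equality iff $v=1$; so replacing a composite $t_i$ by its prime-power factors does not increase the cost and does not change the $\lcm$, hence optimality forces each $t_i$ to be a prime power. Finally, if two of the $t_i$ were powers of the same prime $p_j$, we could discard the smaller one without affecting the $\lcm$, again lowering the cost. The only prime powers that must appear, in order to have $\lcm = m = p_1^{e_1}\cdots p_k^{e_k}$, are exactly $p_1^{e_1},\dots,p_k^{e_k}$, giving the claimed optimal choice $(p_1^{e_1},\dots,p_k^{e_k})$, with value $T_{3,q}(n)$ when $3\nmid t$ throughout.

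The main obstacle, such as it is, is bookkeeping: one must be careful that the weight function genuinely makes every ``divisible-by-$3$'' choice strictly worse, so that the first reduction is valid, and that the remaining divisors are all coprime to $3$ (which holds since $m$ is the prime-to-$3$ part of $q-1$ and the $p_i$ are the odd primes other than $3$). There is no real analytic or structural difficulty here beyond transcribing the $\GL_2$ argument with the new $\epsilon$; the essential input, Lemma~\ref{lemm33}, is already in hand.
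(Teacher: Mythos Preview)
Your proposal is correct and follows essentially the same three-step reduction as the paper's own proof: first strip factors of $3$ from each $t_i$ (harmless for the $\lcm$ since $3^r$ is already present, strictly cost-reducing by the corollary's weighting), then split each remaining $t_i$ into its prime-power factors via Lemma~\ref{lemm33}, and finally discard redundant powers of the same prime. You are in fact slightly more careful than the paper in flagging that the literal formula $\epsilon(t)=(4+2(-1)^t)/2$ tests parity rather than divisibility by $3$, and in grounding the cost function on the preceding corollary instead; the paper's proof contains the same slip (writing ``even'' where ``divisible by $3$'' is meant) but proceeds identically.
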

\begin{proof}
Suppose $(t_2, \dots, t_\ell)$ is an optimal choice for $n$. If some $t_i$ is even, say equal to $3s$, replacing $t_i$ by $s$ does not change the $\lcm$ in the statement Problem \ref{prob}, but 
decreases the value of $\sum_i \epsilon(t_i) t_i$.  Since $(t_2, \dots, t_\ell)$ is optimal for $n$, this means that all of the $t_i$'s have to be coprime to $3$. Next, write each $t_i$ as the product of prime powers $\pi_1^{m_1} \cdots \pi_v^{m_v}$. By Lemma \ref{lemm33}, $\sum_j \pi_j^{m_j} \leq t_i$ with equality only when $v=1$. Again, since $(t_2, \dots, t_\ell)$ is optimal, this means each $t_i$ is a prime power. It is also clear that if $i \ne j$, then $(t_i, t_j) =1$, because otherwise $t_i, t_j$ will be powers of the same prime, and we can throw away the one with smaller exponent. 
\end{proof}

Putting everything together finishes the proof of Theorem \ref{theo314}.

\section{General remarks/Future Work}
Although we do not have a way to tackle the conjecture for general $n$ at least for a \textit{very divisible} prime $n$ the algorithm described in \S\ref{sec:mingl2} and \S\ref{sec:mingl3} extends to other $\GL_n(\F_q)$ for small $n$. We observe that for $n$ prime we can always isolate a subgroup, in our minimal faithful collection, whose order is coprime to $g = \gcd(n,q-1)$. Call this subgroup $H_1$. Then, we may analyze $H_1 \cap \SL_n(\F_q)$ and observe that it's the largest core free subgroup of $\SL_n$. To compute it explicitly we use Patton's result in \cite{Patton}. Namely, that the maximal subgroup $P$ of $\SL_n(\F_q)$ has the form 
$$\begin{pmatrix}
  B & x\\ 
  0 & b
\end{pmatrix}$$
where, $b^{-1} =  \det(B) \in \F_q^{\times}$, $x \in \F_q^{n-1}, B \in \GL_{n-1}(\F_q)
$ and $b,x$ arbitrary. Then, at least heuristically, the maximal core free subgroup $H_1$ must be a subgroup of $P$.
This forces $A$ to be the maximal  subgroup of $\GL_{n-1}(\F_q)$, with order coprime to $g$. We can express $A$ as $D_1\cdot A'$, where $A'$ is the maximal subgroup of $\SL_{n-1}(\F_q)$ with order coprime to $g$. Now we apply Patton's result again and iterate the procedure. We keep doing this until we obtain a $2 \times 2$ matrix, which we can handle by computing the maximal subgroup of $\SL_2(\F_q)$ of order coprime to $g$. The case of general $n$ will require  new techniques other than those described in this paper.


\begin{thebibliography}{99}
\bibitem{Ash}M. Aschbacher, {\em On the maximal subgroups of the finite classical groups}, Inv. Math. {\bf76}, 469--514, 1984.
\bibitem{sl2} H. Behravesh, {\em Quasi-permutation representations of $\SL_2(\F_q)$ and $\SL_2(\F_q)$}, Glasgow Mathematical Journal {\bf41}, 393--408, 1999. 
\bibitem{sln} Bray, John N., et al., {\em The Maximal Subgroups of the Low-Dimensional Finite Classical Groups}, (London Mathematical Society Lecture Note Series), Cambridge University Press, 2013.
\bibitem{Applications}John J. Cannon, Derek F. Holt, and William R. Unger, {\em The use of permutation representations in structural computations in large finite matrix groups}, Journal of Symbolic Computation {\bf95}, 26--38, 2019.
\bibitem{Conway} Conway, John Horton, {\em Atlas of finite groups: maximal subgroups and ordinary characters for simple groups}, Oxford University Press, 1985.
\bibitem{Cooper} Bruce Cooperstein, {\em Minimal degree for a permutation representation of a classical group}, Israel J. Math. {\bf30}, 215--235, 1978.
\bibitem{gl2} M. R. Darafsheh, M. Ghorbany, A. Daneshkhah, and H. Behravesh, {\em Quasi-permutation 
representations of the group $\GL_2(\F_q)$}, Journal of Algebra {\bf 243}, 142--167, 2001. 
\bibitem{EDH} D. Easdown, Michael Hendriksen, {\em Minimal permutation representations of semidirect products of groups}, Journal of Group Theory {\bf19}, 1017--1048, 2016.
\bibitem{Elias} B. Elias, {\em Minimally 
faithful group actions and $p$-groups}, Princeton University Senior Thesis, 2005. 
\bibitem{EST}  B. Elias, L. Silberman, and R. Takloo-Bighash, {\em Minimal permutation representations of nilpotent groups}, Experiment. Math. {\bf19}, no. 1, 121--12, 2010.
\bibitem{GSL}  Graves, Christina; Graves, Stephen J.; Lauderdale, L.-K. {\em Vertex-minimal graphs with dihedral symmetry I}. Discrete Math. 340 (2017), no. 10, 2573--2588.
\bibitem{Gre} M. A. Grechkoseeva, {\em On minimal permutation representations of classical simple groups}.
Siberian Math. J. {\bf 44} (2003), no. 3, 443--462.
\bibitem{Guest} S. Guest, J. Morris, C. E. Praeger, and P. Spiga, {\em On the maximum orders of elements of finite almost simple groups and primitive permutation groups}, Trans. Amer. Math. Soc. {\bf367}, 7665--7694, 2015.
\bibitem{Johnson} D. L. Johnson, {\em Minimal permutation representations of finite groups}, Amer. J. Math. {\bf93}, 857--866, 1971. 
\bibitem{Maz} V. D. Mazurov, {\em Minimal permutation representations of finite simple classical groups}.
Special linear, symplectic, and unitary groups, Algebra and Logic {\bf 32} (1993), no. 3, 142--153
(1994).
\bibitem{Patton} W. H. Patton, {\em The Minimum Index For Subgroups In Some Classical Groups:  A Generalization Of A Theorem Of Galois}, University of Illinois at Chicago P.h.D. Thesis, 1972.
\bibitem{prodsubgp} Jacques Th\'{e}venaz, {\em Maximal Subgroups of Direct Products}, Journal of Algebra {\bf198}, 352 -- 361, 1997.
\bibitem{V1} A. V. Vasil'ev, {\em Minimal permutation representations of finite simple exceptional groups of
types $G_2$ and $F_4$} (Russian, with Russian summary), Algebra i Logika  {\bf 35} (1996), no. 6, 663--684, 752, DOI 10.1007/BF02366397; English transl., Algebra and Logic {\bf 35} (1996), no. 6,
371--383. MR1454682 (98d:20055)
\bibitem{V2}  A. V. Vasil'ev, {\em Minimal permutation representations of finite simple exceptional groups of
types $E_6$, $E_7$ and $E_8$} (Russian, with Russian summary), Algebra i Logika {\bf 36} (1997), no. 5,
518--530, 599--600, DOI 10.1007/BF02671607; English transl., Algebra and Logic {\bf 36} (1997),
no. 5, 302--310. MR1601169 (98m:20055)
\bibitem{V3}  A. V. Vasil'ev, {\em Minimal permutation representations of finite simple exceptional groups of
twisted type} (Russian, with Russian summary), Algebra i Logika  {\bf 37} (1998), no. 1, 17--35,
122, DOI 10.1007/BF02684081; English transl., Algebra and Logic {\bf 37} (1998), no. 1, 9--20.
MR1672901 (99m:20115)
 \bibitem{V4} A. V. Vasil'ev and V. D. Mazurov, {\em Minimal permutation representations of finite simple
orthogonal groups} (Russian, with Russian summary), Algebra i Logika {\bf 33} (1994), no. 6, 603--627, 716, 
DOI 10.1007/BF00756348; English transl., Algebra and Logic {\bf 33} (1994), no. 6,
337--350 (1995). MR1347262 (96i:20063)
\bibitem{Wong} W. J. Wong, {\em Linear groups analogous to permutation groups}, J. Austral. Math. Soc. Ser.
A  {\bf 3} (1963), 180--184.
\end{thebibliography}
\end{document}